\theoremstyle{plain}
\newtheorem{theorem}{Theorem}[section]
\newtheorem{definition}{Definition}[section]
\newtheorem{lemma}{Lemma}[section]
\newtheorem{remark}{Remark}[section]
\newtheorem{proposition}{Proposition}[section]
\newif \ifLastSection \LastSectionfalse
\numberwithin{equation}{section}
\begin{document}

\title{\bf Large time behavior of solutions to a diffusion approximation radiation hydrodynamics model}

\vskip 2.5cm
\author{
Wenjun Wang$^{1}$\thanks{E-mail: wwj001373@hotmail.com},\ \ \ \
Feng Xie$^{2}$\thanks{E-mail: tzxief@sjtu.edu.cn},\ \ \ \
Xiongfeng Yang$^{2}$\thanks{E-mail: xf-yang@sjtu.edu.cn},
\vspace{2mm}\\
\textit{\small 1. College of Science, University of Shanghai for
Science and Technology,}\\\textit{\small Shanghai 200093, P.R. China}\\
\textit{\small 2. School of Mathematical Sciences, CMA-Shanghai, and MOE-LSC, Shanghai Jiao Tong
University,} \\ \textit{\small Shanghai 200240, P.R. China} \\
 }

\date{}

\maketitle

\textbf{{\bf Abstract:}} This paper concerns with the large time behavior of solutions to a diffusion approximation radiation hydrodynamics model when the initial data is a small perturbation around an equilibrium state. The global-in-time well-posedness of solutions is achieved in Sobolev spaces depending on the Littlewood-Paley decomposition technique together with certain elaborate energy estimates in frequency space. Moreover, the optimal decay rate of the solution is also yielded provided the initial data also satisfy an additional $L^1$ condition. Meanwhile, the similar results of the diffusion approximation system without the thermal conductivity could be also established.

\vskip 1mm
 \textbf{{\bf Key Words}:} Diffusion approximate radiation hydrodynamics model, global well-posedness, large time behavior, Littlewood-Paley decomposition

\bigbreak  {\textbf{AMS Subject Classification :} 76N15; 76N10; 74H40

\tableofcontents

\section{Introduction}

\subsection{Background and motivation}

The radiation hydrodynamics system describes the coupling effect between the macroscopic description of the fluid and the statistical character of the massless photons. It finds many applications in modelling the combustion, high-temperature hydrodynamics and gaseous stars in astrophysics etc. The radiative effect is necessary to be included into the hydrodynamics equations for the high-temperature fluid, because the energy and momentum carried by the radiation field are significant in comparison with those carried by the macroscopic fluid. The readers may consult the monographs \cite{LMH,MM} for more details.

Here the macroscopic fluid is described by the compressible Navier-Stokes-Fourier system which represents the conservation of mass, momentum and energy, while the motion of photons is described by the dynamics of the radiation field which is incorporated in a scalar quantity: the radiative intensity $I = I(t,x,\vec \omega,\nu)$. It represents the radiative intensity of the photon which moves in the direction vector $\vec\omega \in \mathbb{S}^{n-1}$ ($\mathbb{S}^{n-1}$ denotes the unit sphere in $\mathbb{R}^n$) with frequency $ \nu \geq 0$ at the position $x$ and time $t$. The time evolution of $I$ is governed by a transport equation with a source term due to the absorbing and the scattering effects of the photons. That is,
\begin{eqnarray}\label{1equationofintencivity}
\frac{1}{\mathcal C} \partial_t I +\vec \omega \cdot \nabla_x I = S,
\end{eqnarray}
where $\mathcal C$ is the light speed and the source term $S$ is given by
\begin{eqnarray*}
S = \sigma_a (B(\nu,\theta)-I) + \sigma_s \Big(\frac{1}{|\mathbb{S}^{n-1}|} \int_{\mathbb{S}^{n-1}} I(\cdot,\vec{\omega})d\vec{\omega}-I \Big).
\end{eqnarray*}
The absorption coefficient $\sigma_a = \sigma_a(\nu,\rho,\theta )$
and the scattering coefficient $\sigma_s=\sigma_s(\nu,\rho,\theta)$ usually depend on the temperature $\theta$ and density $\rho$ of the macroscopic fluid.  $B(\nu,\theta)\geq 0$ is the
equilibrium thermal distribution of radiative intensity, $|\mathbb{S}^{n-1}|$ is the measure of $\mathbb{S}^{n-1}$.
The collective effect of radiation is expressed in 	
terms of integral means (with respect to the variables $\vec{\omega} $ and $\nu$) of quantities depending on $I$.

The coupled system of the compressible Navier-Stokes-Fourier equations with the transport equation above is usually hard for both analysis and numerical simulation.  So some simplified but effective models are proposed. Based on the following two assumptions, the above transport equation could be approximated for the sake of mathematical analysis and computation.
One assumption is the \textit{grey} approximation, it means that the transport coefficients $\sigma_a, \sigma_s$ are assumed to be independent of the frequency $\nu$; Another one is called {\it P1-approximation}, it is assumed that the radiative intensity $I$ could be expanded as a linear function with respect to the angular variable $\vec \omega$, that is,
$$I= I_0 + \vec{\omega} \cdot \vec{I}_1,$$
where $I_0$ and $\vec{I}_1$ are independent of $\vec \omega$ and $\nu$.  Taking the $\{1, \vec\omega\}$-moments of equation (\ref{1equationofintencivity}) and integrating the resultant equation over $\vec\omega \in \mathbb{S}^{n-1}$ and $\nu \in {\mathbb R}^+$, it yields that
\begin{eqnarray*}
\frac{1}{\mathcal{C}} \partial_t I_0 + \frac{1}{n} {\rm div} \vec I_1 = S_E,~~~~
\frac{1}{\mathcal{C}}\partial_t \vec I_1 +\nabla I_0 =\vec S_F.
\end{eqnarray*}
Here the radiation energy and radiation flux are given by
\begin{eqnarray*}
S_E= \frac{1}{\mathcal C} \int_0^{\infty}\int_{\mathbb{S}^{n-1}} S(\cdot,\nu,\vec{\omega})d\vec\omega d\nu, ~~\text{and}~~ \vec S_F= \int_0^{\infty}\int_{\mathbb{S}^{n-1}} \vec{\omega} S(\cdot,\nu,\vec{\omega})d\vec\omega d\nu.
\end{eqnarray*}

 The motion of the macroscopic fluid with the photons is achieved
 though additional extra source terms in the balance of momentum and energy. In the spacial dimension $n=3$, the {\it P1-approximation} radiation hydrodynamics system takes the following form
\begin{eqnarray}\label{1.1}
\left\{ \begin{array}{llllll}
 \partial_t \rho + {\rm div} (\rho \vec u)=0, \\[2mm]
 \partial_t (\rho \vec u) + {\rm div}(\rho \vec u \otimes \vec u)+ \nabla P = {\rm div} \mathbb{T} + \frac{1}{3\mathcal{C}} \mathcal{L}(\sigma_a+\sigma_s) \vec I_1,\\[2mm]
\partial_t (\frac{1}{2}\rho |\vec u|^2+\rho e) +{\rm div}[(\frac{1}{2}\rho |\vec u|^2+\rho e+ P)\vec u ]
 +
 {\rm div} \vec F =
 {\rm div}(\mathbb{T}\cdot\vec u)
-\mathcal{L}\sigma_a(b(\theta)-I_0),\\[2mm]
\frac{1}{\mathcal{C}} \partial_t I_0 + \frac{1}{3} {\rm div} \vec I_1 = \mathcal{L} \sigma_a(b(\theta)-I_0),\\[2mm]
\frac{1}{\mathcal{C}}\partial_t \vec I_1 +\nabla I_0 =-\mathcal{L}(\sigma_a+\sigma_s)\vec I_1,
\end{array}
\right.
\end{eqnarray}
for $(x,t)\in \mathbb{R}^3\times [0,+\infty)$. Here $\mathcal L$, $\sigma_a$ and $\sigma_s$ are positive dimensionless parameters related to the radiation field. The pressure of the macroscopic fluid takes the form of $P= \frac{2}{3} \rho e$ and inner energy $e=\frac{3}{2} R \theta$ with the density $\rho$ and the temperature $\theta$, $\vec u$ is velocity vector. Without loss of generality, we assume the constant $R=1$ for simplicity. $\mathbb{T}$ stands
for the viscous stress tensor determined by Newton's rheological law.
\begin{eqnarray*}
\mathbb{T}= \mu \big(\nabla u+ \nabla^T u\big)+\lambda {\rm div} u {\rm \mathbb{I}}_{3\times 3},
\end{eqnarray*}
where $\mu$ is the shear viscosity coefficient and $\lambda=\zeta-\frac{2}{3}\mu$ with the bulk viscosity coefficient $\zeta\geq 0$. ${\rm \mathbb{I}_{3\times 3}}$ is the $3\times 3$ identity matrix. The heat flux $\vec F= -\kappa \nabla \theta$ satisfies the
Fourier principle with the thermal conductivity $\kappa$. The smooth function $b(\theta)$ is the integral of $B(\nu,\theta)$ with respect to the frequency $\nu$, for example $b(\theta)=\theta^4$ for the case that $B=\frac{2h\nu^3}{C^2}(e^{\frac{k \nu}{h\theta}}-1)^{-1}$ with Planck and Boltzmann constants $h$ and $k$ respectively.

In general, the first order corrector function $\vec I_1$ changes very small with respect to time $t$ for the ``almost" isotropic case.  In this way, we can assume that $\partial_t \vec I_1=0$ in the fifth equation in (\ref{1.1}).  Thus $I_0$ and $\vec I_1$ satisfy the following Fick's principle, that is
\begin{eqnarray*}
 -\nabla I_0 =\mathcal{L}(\sigma_a+\sigma_s)\vec I_1.
\end{eqnarray*}
So, we will obtain the diffusion approximation radiation hydrodynamics system.
\begin{eqnarray}\label{1.2}
\left\{ \begin{array}{llllll}
 \partial_t \rho + {\rm div} (\rho \vec u)=0, \\[2mm]
 \rho \partial_t \vec u + (\rho \vec u \cdot \nabla) \vec u + \nabla P = {\rm div} \mathbb{T} - \frac{1}{3\mathcal{C}}\nabla I_0,\\[2mm]
\frac{3}{2}\rho \partial_t \theta + P {\rm div} \vec u - \kappa \triangle \theta + \mathcal{L}\sigma_a(b(\theta)-I_0)
= (\mathbb{T} \cdot \nabla)\cdot \vec u +\frac{1}{3\mathcal{C}}\vec u \cdot \nabla I_0 -\frac{3}{2}\rho \vec u \cdot \nabla \theta,\\[2mm]
\frac{1}{\mathcal{C}} \partial_t I_0 - \frac{1}{3\mathcal{L}(\sigma_a+\sigma_s)} \triangle I_0 =\mathcal{L} \sigma_a(b(\theta)-I_0).
\end{array}
\right.
\end{eqnarray}
 In this paper we consider the Cauchy problem for the diffusion approximation radiation hydrodynamics system of equations \eqref{1.2} with the following initial data.
\begin{equation}\label{1.3}
(\rho,\vec u,\theta,I_0)(x,t)|_{t=0}
 =
  (\rho_0, \vec u_0, \theta_0, I_0^0)(x)
   \rightarrow (1, \vec 0, 1, b(1)),
    \ \ \ \
    {\rm as}\ |x|\rightarrow +\infty,
\end{equation}
where $(1,\vec 0,1,b(1))$ is the equilibrium state of the system \eqref{1.2}. For simplicity, we assume that the given smooth function $b(\theta)$ satisfies a natural physical assumption of $b^\prime(1)>0$.

Before proceeding, let us review the related known results. System \eqref{1.2} is reduced to the classical non-isentropic compressible Navier-Stokes-Fourier equations if we ignore the radiative effect. It is well-known that the strong dissipative property admits global solutions to the non-isentropic compressible Navier-Stokes-Fourier equations, see \cite{Danchin,De,Matsumura-Nishida} for the global existence results with the small perturbation initial data, \cite{Liu-Wang, XY} for the pointwise asymptotic behaviors of the solutions, and \cite{DM} for the global well-posedness of the equations even without the heat conductivity (i.e. the parameter $\kappa=0$). One also refers to \cite{NS5,KS} and references therein for related results of Navier-Stokes-Fourier equation in exterior domain. In addition, the solutions are proved to blow up in finite time when the initial data has compact support besides it is large in some Sobolev spaces, see \cite{Xin}.

When the radiation effect is taken into account, system \eqref{1.2} can be viewed as the Navier-Stokes-Fourier equations coupled with an parabolic equation with high order nonlinear term with respect to the temperature $\theta$. In the absence of both the viscosity and the heat conductivity, by denoting $q:=-\nabla I_0$ and ignoring $\partial_tI_0$, system \eqref{1.2} can be formulated as the following well-known system of the radiation hydrodynamics.
\begin{eqnarray}\label{1.4}
\left\{ \begin{array}{llllll}
 \partial_t \rho + {\rm div} (\rho \vec u)=0, \\[2mm]
 \rho \partial_t \vec u + (\rho \vec u \cdot \nabla) \vec u + \nabla P = 0,\\[2mm]
\frac{3}{2}\rho \partial_t \theta + P {\rm div} \vec u + \frac{1}{3\mathcal{L}(\sigma_a+\sigma_s)} {\rm div} q
= -\frac{3}{2}\rho \vec u \cdot \nabla \theta,\\[2mm]
 - \frac{1}{3\mathcal{L}(\sigma_a+\sigma_s)} \nabla{\rm div}q +\mathcal{L} \sigma_a q+\mathcal{L} \sigma_a \nabla b(\theta)=0.
\end{array}
\right.
\end{eqnarray}
There are many results providing insight into the existence, uniqueness, asymptotic behavior and decay rates of solutions to the model \eqref{1.4} together with the related ``baby model".
\begin{equation}\label{1.5}
\left\{
\begin{array}{lc}
\partial_t u +{\rm div}f(u)+ {\rm div} q=0,\\[2mm]
-\nabla {\rm div}q+q+\nabla u=0.
\end{array}
\right.
\end{equation}
We refer the readers to \cite{DFZ,F,FRX,GZ,GRZ,KN-1,KN-2,KT,L,LCG,RX,RX-MMMAS,RZ,WW,WX,WX1,YZ,ZS} and the references therein. Let us come back to the radiation hydrodynamic systems \eqref{1.1} and \eqref{1.2} again. Danchin and Ducomet studied the following {\it P1-approximation} radiation hydrodynamics model in \cite{Danchin-Ducomet}.
 \begin{equation}\label{1.6}
\left\{\begin{array}{l}
\partial_t \rho+{\rm div}(\rho \vec u)=0,\\[2mm]
\partial_t(\rho \vec u)+{\rm div}(\rho \vec u\otimes \vec u)+ \frac{1}{ (Ma)^2}\nabla P(\rho)
 =\frac{1}{Re}{\rm div}\mathbb{T}
   +
    \frac{1}{3}\mathcal{L}(\sigma_s+\sigma_a) \vec{I}_1,\\[2mm]
\frac{1}{\mathcal{C}}
 \partial_t I_0
  +
   \frac{1}{3}
    {\rm div} \vec{I}_1
     =
      \mathcal{L}\sigma_a(b(\rho)-I_0),\\[2mm]
\frac{1}{\mathcal{C}}
 \partial_t \vec{I}_1
  +
   \nabla I_0
    =
     -
      \mathcal{L}
       (\sigma_a+\sigma_s)
        \vec{I}_1.\\
 \end{array}
        \right.
\end{equation}
Where the global-in-time existence of strong small perturbation solution was established in the critical Besov spaces $\dot B^{n/2}_{2,1}(\mathbb{R}^n)$. Moreover, the global existence of the solution in critical Besov space for radiation hydrodynamics model \eqref{1.1} also has been achieved in \cite{Danchin-Ducomet-2} recently. It is noticed that the large time behavior of the solutions have not been given in both of these two papers. Later, the global well-posedness and the large time behavior of the smooth solution to \eqref{1.6} in Sobolev space also have been studied by the same authors of this paper in \cite{WXY}. It is shown that the interaction between fluid and the radiation effect could produce partial damping effect on the system. Together with the viscosity on the velocity, the existence and the time-decay rate of the solution could be obtained. The damping effect has also been observed in the baby model of the radiating gas \eqref{1.5} by Kawashima in \cite{KN-1}.

As mentioned above, the energy carried by the radiation field usually dominates the total energy for the high-temperature fluids. Consequently, the energy equation should be taken into account for the precise description of the motion of the high-temperature fluids. In this way, system \eqref{1.2} is more important and more interesting in mathematical analysis from the physical point of view. In this paper, we mainly study the global in time existence and the large time behavior of the solution to the Cauchy problem \eqref{1.2}-\eqref{1.3} when the initial data is a small perturbation around the constant equilibria $(1,0,1,b(1))$.

\subsection{Main results}

Now, it is position to state the main results in the following theorems.
\begin{theorem}[Global existence]\label{Theorem 1.1}
Assume that there is a small positive constant $\varepsilon_0$, such that the initial data satisfies
\begin{equation}\label{1.7}
\|(\rho_0-1,\vec u_0,\theta_0-1,I_0^0-b(1))\|_{H^4(\mathbb{R}^3)}
 \leq
  \varepsilon_0.
\end{equation}
 Then the initial value problem \eqref{1.2} and \eqref{1.3} admits a global unique solution $(\rho, \vec u, \theta, I_0)$ in the following sense.
\begin{equation*}
\begin{split}
\rho-1
 \in
 &
  C^0(0,\infty; H^4(\mathbb R^3))\cap C^1(0,\infty; H^3(\mathbb R^3)),\\[2mm]
\vec u, \theta-1, I_0-b(1)
 \in
 &
  C^0(0,\infty; H^4(\mathbb R^3))\cap C^1(0,\infty; H^2(\mathbb R^3)).
\end{split}
\end{equation*}
Moreover, there exists a positive constant $C_0$ such that for any $t\geq 0$, it holds
\begin{equation}\label{Aprioriestimatewiththermal1}
\begin{split}
\|&(\rho-1,\vec u,  \theta-1,I_0-b(1))(t)\|_{H^4(\mathbb R^3)}^2
 \\
 &  +
\int_0^t
\Big(\|[b^\prime(1)(\theta-1)-(I_0-b(1))](\tau)\|_{H^4(\mathbb R^3)}^2 \\
&\hspace{1cm} + \|\nabla \rho(\tau)\|_{H^3(\mathbb R^3)}^2 +
\|\nabla (\vec u,\theta-1,I_0-b(1))(\tau)\|_{H^4(\mathbb R^3)}^2
\Big){\rm d}\tau\\
 \leq
 & C_0
\|(\rho_0-1,\vec u_0,\theta_0-1,I_0^0-b(1))\|_{H^4(\mathbb R^3)}^2.
\end{split}
\end{equation}
\end{theorem}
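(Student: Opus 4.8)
The plan is to establish the a priori estimate \eqref{Aprioriestimatewiththermal1} by a standard continuation argument combined with carefully structured energy estimates, and then to conclude global existence from the local existence theory (which follows from the parabolic–hyperbolic structure of \eqref{1.2}) together with this uniform bound. First I would introduce the perturbation unknowns $\varrho = \rho - 1$, $\vec u$, $\vartheta = \theta - 1$, $\mathcal{I} = I_0 - b(1)$ and rewrite \eqref{1.2} as a symmetrizable system for $(\varrho, \vec u, \vartheta, \mathcal{I})$ with linear principal part plus quadratic-and-higher nonlinear remainders. The crucial observation, already flagged in the introduction via the analogy with Kawashima's baby model \eqref{1.5} and the earlier work \cite{WXY}, is that the coupling between the temperature equation and the radiation equation is of \emph{damping} type: linearizing the source term $\mathcal{L}\sigma_a(b(\theta) - I_0)$ around the equilibrium gives $\mathcal{L}\sigma_a(b'(1)\vartheta - \mathcal{I}) + \text{h.o.t.}$, and since $b'(1) > 0$ this produces, after the natural energy pairing, a dissipative term controlling exactly $\|b'(1)\vartheta - \mathcal{I}\|_{L^2}^2$ — which is why that precise combination appears on the left-hand side of \eqref{Aprioriestimatewiththermal1} rather than $\vartheta$ and $\mathcal{I}$ separately.

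Next I would carry out the energy estimates level by level, for $k = 0, 1, 2, 3, 4$, applying $\partial_x^\alpha$ with $|\alpha| = k$ to the system, pairing with the appropriate components, and summing. The viscous term $\operatorname{div}\mathbb{T}$ yields dissipation $\|\nabla \vec u\|_{H^4}^2$ (full gradient, since $\mu > 0$ and $\zeta \geq 0$), the heat conduction term $-\kappa \triangle\vartheta$ yields $\|\nabla\vartheta\|_{H^4}^2$, and the radiation diffusion $-\frac{1}{3\mathcal{L}(\sigma_a+\sigma_s)}\triangle\mathcal I$ yields $\|\nabla \mathcal I\|_{H^4}^2$; the damping structure above yields $\|b'(1)\vartheta - \mathcal I\|_{H^4}^2$. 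The density dissipation $\|\nabla\varrho\|_{H^3}^2$ is not present in the natural energy identity and must be recovered separately: I would use the momentum equation to express $\nabla\varrho$ (equivalently $\nabla P$) in terms of $\partial_t\vec u$, $\operatorname{div}\mathbb{T}$, $\nabla\mathcal I$ and nonlinear terms, test against $\nabla\varrho$, and integrate — this is the usual trick that trades one order of regularity, which is exactly why $\nabla\varrho$ is controlled only in $H^3$ while $\nabla(\vec u,\vartheta,\mathcal I)$ is controlled in $H^4$. All nonlinear terms are estimated by Sobolev embedding $H^2(\mathbb R^3) \hookrightarrow L^\infty$, Moser-type product and composition estimates (to handle $b(\theta)$), and Young's inequality, absorbing the resulting terms into the dissipation provided the a priori smallness assumption $\sup_t \|(\varrho,\vec u,\vartheta,\mathcal I)(t)\|_{H^4} \leq \delta$ holds with $\delta$ small.

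Assembling these estimates, and forming a suitable linear combination so that the genuine energy $E(t) = \|(\varrho,\vec u,\vartheta,\mathcal I)(t)\|_{H^4}^2$ is equivalent to the combined functional, I obtain a differential inequality of the form $\frac{d}{dt}\mathcal E(t) + c\,\mathcal D(t) \leq C\delta\,\mathcal D(t)$, where $\mathcal D(t)$ is the full dissipation appearing under the integral in \eqref{Aprioriestimatewiththermal1} and $\mathcal E(t) \sim E(t)$. For $\delta$ small this gives $\frac{d}{dt}\mathcal E(t) + \frac{c}{2}\mathcal D(t) \leq 0$; integrating in time yields \eqref{Aprioriestimatewiththermal1} with $C_0$ the equivalence constant. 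The continuation argument then upgrades the local solution to a global one and propagates the smallness. I expect the main obstacle to be the closure of the density estimate: because there is no diffusion in the continuity equation, recovering $\|\nabla\varrho\|_{H^3}^2$ requires the momentum-equation testing trick, and one must check that the time-derivative term $\frac{d}{dt}\int \nabla\vec u \cdot \nabla\varrho$ that this generates can be absorbed into the main energy functional and that the cross terms do not destroy the sign of the dissipation — together with verifying that the high-order nonlinear terms involving $b(\theta)$ and the $\frac{1}{3\mathcal C}\vec u \cdot \nabla I_0$ coupling in the energy equation are genuinely cubic (or controllable quadratic) in the perturbation so that the $C\delta\,\mathcal D$ bound is legitimate.
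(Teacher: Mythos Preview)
Your overall architecture (reformulate as perturbation, energy estimates plus density-recovery trick, continuation argument) is correct, and you have correctly identified the damping combination $b'(1)\vartheta-\mathcal I$. However, there is a concrete obstruction that your ``natural energy pairing'' cannot overcome, and this is precisely why the paper does \emph{not} use the classical energy method you describe.

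The difficulty is the coupling $\tfrac{1}{3\mathcal C}\nabla I_0$ in the momentum equation. After linearization, the $u$--equation contains $\tfrac{1}{3\mathcal C}\nabla j_0$, but the $j_0$--equation contains no $\operatorname{div}u$ term to pair with it. With any diagonal weights $(1,1,\tfrac32,\beta)$ on $(\varrho,u,\vartheta,j_0)$ you are left with an unbalanced cross term $\tfrac{1}{3\mathcal C}\int\nabla j_0\cdot u\,dx$. Young's inequality forces you to absorb either $\|u\|_{L^2}^2$ or $\|j_0\|_{L^2}^2$, and neither appears in your dissipation $\mathcal D$ (which only contains $\|\nabla u\|$, $\|\nabla\vartheta\|$, $\|\nabla j_0\|$, $\|b'(1)\vartheta-j_0\|$ and $\|\nabla\varrho\|$). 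So the estimate does not close at the $L^2$ level. Choosing $\beta$ to make the zeroth-order damping a perfect square, as you implicitly suggest, does not help with this first-order coupling; and the fact that $\|\vartheta\|_{L^2}^2$ and $\|j_0\|_{L^2}^2$ are individually absent from $\mathcal D$ means you cannot trade one for the other either.

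The paper addresses this by a Littlewood--Paley frequency splitting. In the high-frequency regime ($2^k$ large) one has $\|u_k\|_{L^2}\lesssim 2^{-k}\|\nabla u_k\|_{L^2}$, so the bad cross term is absorbed by the viscous dissipation; this is the content of \eqref{3.11}--\eqref{3.14}. In the low-frequency regime the paper introduces the combined modes $\Theta=3\mathcal C\,\vartheta+2j_0$ and $\Xi=\gamma\vartheta-bj_0$ (see \eqref{3.33}--\eqref{3.34}): crucially the $\Theta$--equation now contains a $2\mathcal C\,\operatorname{div}u$ term, which symmetrizes the coupling with momentum and allows the energy identity \eqref{3.35} to close with $\Xi$ providing genuine damping. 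The price is that the diffusion becomes off-diagonal in $(\Theta,\Xi)$, producing $|\xi|^2$-weighted cross terms that are harmless only for small $|\xi|$; hence the frequency splitting is essential, not merely convenient. A medium-frequency band is handled by a direct spectral (Routh--Hurwitz) argument. Your proposal is missing this entire mechanism, and the paper's Remark after Theorem~\ref{Theorem 1.1} explicitly flags that the classical energy method fails here.
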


\begin{remark}
In Theorem 1.1, the appearance of the second line in \eqref{Aprioriestimatewiththermal1} is due to the ``damping'' effect produced by the interaction between fluid and the radiation. This effect also provide some diffusion property in the combination of the unknown functions. In the process of proving, the main difficulty in establishing the global existence of solution for the model \eqref{1.2} comes from deriving the $L_t^1L_x^1$-norm estimate of $\tilde \theta(I_0-b(1))$. The classical energy method dosen't work here. We overcome this difficulty by dividing the solution $(\tilde \rho,\tilde u,\tilde \theta,j_0)$ into three parts, i.e. the low frequency part, the medium frequency part and the high frequency part. By virtue of the ``damping'' effect
and the diffusion structure of the system, the $L_t^1L_x^1$-norm estimate can be bounded in different frequency regions. It is worth pointing out that a suitable combination of the solution in low-frequency regimes enables us to achieve the desired a priori
estimates and hence to establish the global existence of solution.
\end{remark}

Moreover, the large time behavior of the solution is obtained in the following theorem.

\begin{theorem}[Large time behavior of solution]\label{Theorem 1.2}
Under the assumption of Theorem \ref{Theorem 1.1}, suppose further that $\|(\rho_0-1, \vec u_0, \theta_0-1,I_0^0-b(1))\|_{L^1(\mathbb{R}^3)}$ is bounded. Then there exists a positive constant $\tilde C_0$, such that
\begin{equation*}
\begin{split}
\|\nabla^k(\rho-1,\vec u, \theta-1,I_0-b(1))(t)\|_{L^2(\mathbb{R}^3)}
 \leq
 &
  \tilde C_0(1+t)^{-\frac{3}{4}-\frac{k}{2}},\ \ \ \ for\ k=0,1,2,\\[2mm]
\|\nabla^k(\rho-1, \vec u, \theta-1,I_0-b(1))(t)\|_{L^2(\mathbb{R}^3)}
 \leq
 &
  \tilde C_0(1+t)^{-\frac{7}{4}},\ \ \ \ \ \ \ for\ k=3,4.\\[2mm]
\end{split}
\end{equation*}
  Moreover,
\begin{equation*}
\begin{split}
\|\partial_t(\rho-1,\vec u)(t)\|_{L^2(\mathbb{R}^3)}
 \leq&
  \tilde C_0(1+t)^{-\frac{5}{4}},\\[2mm]
\|\partial_t(\theta-1,I_0-b(1))(t)\|_{L^2(\mathbb{R}^3)}
 \leq&
  \tilde C_0(1+t)^{-\frac{3}{4}},\\[2mm]
  \|\nabla^k [b^\prime(1)(\theta-1)-(I_0-b(1))]\|_{L^2(\mathbb{R}^3)}
 \leq&
  \tilde C_0(1+t)^{-\frac{5}{4}-\frac{k}{2}},\ \ \ for\ k=0,1,2.
\end{split}
\end{equation*}
\end{theorem}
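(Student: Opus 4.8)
\emph{Strategy.} Introduce the perturbation $(\tilde\rho,\tilde u,\tilde\theta,j_0):=(\rho-1,\vec u,\theta-1,I_0-b(1))$ and rewrite \eqref{1.2} as
\[
\partial_t U=\mathcal{A}U+\mathcal{N}(U),\qquad U:=(\tilde\rho,\tilde u,\tilde\theta,j_0)^{\mathsf T},
\]
where $\mathcal{A}$ is the constant-coefficient operator obtained by linearizing \eqref{1.2} at $(1,\vec 0,1,b(1))$; it contains the couplings $\nabla\tilde\rho$, $\nabla\tilde\theta$, $\nabla j_0$, ${\rm div}\,\tilde u$, the parabolic parts $\mu\Delta\tilde u+(\mu+\lambda)\nabla{\rm div}\,\tilde u$, $\kappa\Delta\tilde\theta$, $\tfrac{\mathcal C}{3\mathcal L(\sigma_a+\sigma_s)}\Delta j_0$, and the zeroth-order damping $\mathcal{L}\sigma_a\bigl(b'(1)\tilde\theta-j_0\bigr)$ acting on both $\tilde\theta$ and $j_0$; and $\mathcal{N}(U)$ gathers all the quadratic and higher-order remainders, i.e. terms of type $U\cdot\nabla U$, $\nabla U\cdot\nabla U$, $\tilde\rho\,\partial_t\tilde\theta$, and $\tilde\theta\,j_0$ coming from the Taylor expansion of $b(\theta)-I_0$ around $\theta=1$. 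The plan is the classical scheme: (i) prove sharp $L^1$--$L^2$ decay for the linear semigroup $e^{t\mathcal{A}}$ via a Fourier/Littlewood--Paley analysis; (ii) write the Duhamel formula $U(t)=e^{t\mathcal{A}}U_0+\int_0^t e^{(t-s)\mathcal{A}}\mathcal{N}(U)(s)\,\ud s$; (iii) run a bootstrap that inserts the global a priori bound \eqref{Aprioriestimatewiththermal1} into the nonlinear terms.

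\emph{Linear decay.} In Fourier variables, $\widehat{\mathcal{A}}(\xi)$ splits into the compressible Navier--Stokes block (for $(\tilde\rho,\tilde u)$, forced by $\nabla\tilde\theta$, $\nabla j_0$) and the parabolic-with-damping block for $(\tilde\theta,j_0)$. For $|\xi|$ small there are five branches perturbing to $\lambda(\xi)=-c|\xi|^2+O(|\xi|^{2})$ (two of them acoustic, $\pm i c_s|\xi|+O(|\xi|^2)$) and one genuinely damped branch $\lambda(\xi)=-\mathcal L\sigma_a\bigl(\tfrac23 b'(1)+\mathcal C\bigr)+O(|\xi|^2)$; for $|\xi|$ large all branches have $\operatorname{Re}\lambda(\xi)\le -c$ except the density-dominated one, whose contribution is absorbed by the energy dissipation in \eqref{Aprioriestimatewiththermal1} (which in particular supplies the control of $\nabla\rho$ that the semigroup does not give at high frequency). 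A dyadic energy estimate block by block, $\tfrac{\ud}{\ud t}\|\dot\Delta_j U\|_{L^2}^2+c\min\{2^{2j},1\}\|\dot\Delta_j U\|_{L^2}^2\lesssim\|\dot\Delta_j\mathcal{N}\|_{L^2}\|\dot\Delta_j U\|_{L^2}$ (with an extra $c\,\|\dot\Delta_j(b'(1)\tilde\theta-j_0)\|_{L^2}^2$ on the left), together with Bernstein's inequality and $\|\dot\Delta_j U_0\|_{L^2}\lesssim 2^{3j/2}\|U_0\|_{L^1}$ on low frequencies, yields $\|\nabla^k e^{t\mathcal{A}}U_0\|_{L^2}\lesssim (1+t)^{-\frac34-\frac k2}\bigl(\|U_0\|_{L^1}+\|\nabla^k U_0\|_{L^2}\bigr)$. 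The crucial refinement is that $w:=b'(1)\tilde\theta-j_0$ vanishes on the entire neutral subspace of $\widehat{\mathcal{A}}(0)$ --- a one-line computation shows the $\xi=0$ neutral eigenvector of the $(\tilde\theta,j_0)$-block is $(1,b'(1))$, on which $w=0$, and on the $(\tilde\rho,\tilde u)$ neutral directions $w=0$ trivially --- so the Fourier symbol of $w$ carries an extra factor $|\xi|$ near $\xi=0$, giving $\|\nabla^k w(e^{t\mathcal{A}}U_0)\|_{L^2}\lesssim (1+t)^{-\frac54-\frac k2}\bigl(\|U_0\|_{L^1}+\|\nabla^k U_0\|_{L^2}\bigr)$. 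This is the mechanism behind the faster rate of $b'(1)(\theta-1)-(I_0-b(1))$.

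\emph{Nonlinear bootstrap.} Set
\[
M(t):=\sup_{0\le s\le t}\Bigl(\sum_{k=0}^{4}(1+s)^{\min\{\frac34+\frac k2,\ \frac74\}}\|\nabla^k U(s)\|_{L^2}+\sum_{k=0}^{2}(1+s)^{\frac54+\frac k2}\|\nabla^k(b'(1)\tilde\theta-j_0)(s)\|_{L^2}\Bigr).
\]
Inserting the linear estimates into the Duhamel formula and splitting $\int_0^t=\int_0^{t/2}+\int_{t/2}^t$: on $[0,t/2]$ one uses the $L^1$--$L^2$ smoothing of $e^{(t-s)\mathcal{A}}$ with $\|\mathcal{N}(U)(s)\|_{L^1}\lesssim\|U(s)\|_{L^2}\|\nabla U(s)\|_{L^2}\lesssim M(t)^2(1+s)^{-2}$; on $[t/2,t]$ one places enough derivatives on $\mathcal{N}$ for the kernel to remain integrable at $s=t$ and uses $\|U(s)\|_{H^4}\lesssim\varepsilon_0$ from \eqref{Aprioriestimatewiththermal1} with interpolation and Sobolev embedding to bound $\|\nabla^m\mathcal{N}(s)\|_{L^2}$. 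The resulting time integrals $\int_0^t(1+t-s)^{-\alpha}(1+s)^{-\beta}\,\ud s$ are dominated by the asserted powers of $(1+t)$; the saturation at $(1+t)^{-7/4}$ for $k\ge 2$ is precisely the point where, with only $H^4$ regularity, one can no longer afford to move $k$ derivatives onto the $L^1$-bounded part of the semigroup. One then closes the estimate as $M(t)\lesssim\|U_0\|_{L^1}+\|U_0\|_{H^4}+\varepsilon_0 M(t)$, so $M(t)\le\tilde C_0$ for $\varepsilon_0$ small. The time-derivative bounds follow by substituting the spatial decay rates into the equations \eqref{1.2} ($\partial_t\tilde\rho=-{\rm div}(\rho\vec u)$ gives $(1+t)^{-5/4}$, etc.), and the decay of $\nabla^k w$ follows likewise from the combined $\tilde\theta$- and $j_0$-equations.

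\emph{Main obstacle.} The crux is the nonlinear control of the non-divergence and radiative terms, above all the term $\tilde\theta\,j_0$ stemming from $\sigma_a(b(\theta)-I_0)$ --- exactly the $L^1_tL^1_x$ difficulty flagged after Theorem~\ref{Theorem 1.1}. In the decay setting this forces one to (a) track $w=b'(1)\tilde\theta-j_0$ as an independent quantity with its own, faster, decay rate rather than through a plain energy argument; (b) compensate for the total lack of direct dissipation on $\tilde\rho$ by exploiting the compressible-Navier--Stokes coupling (effective-flux type arguments, or equivalently the $\|\nabla\rho\|_{H^3}$ dissipation in \eqref{Aprioriestimatewiththermal1}); and (c) decide, derivative by derivative, how to distribute $\nabla^k$ between the semigroup and the nonlinearity so that every time integral converges and the bootstrap is self-consistent at the saturated top-order rate. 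Fitting (a)--(c) together is the delicate part; once the linear estimates and the frequency decomposition are established, the remaining estimates are essentially routine.
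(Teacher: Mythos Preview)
Your proposal is correct and follows the same architecture as the paper: linear $L^1$--$L^2$ decay for the low-frequency part of the semigroup, exponential damping via energy estimates for the high-frequency part, Duhamel, and a bootstrap on a weighted-in-time supremum $M(t)$. The one substantive variation is your handling of $w=b'(1)\tilde\theta-j_0$. You derive its faster rate directly from the spectral structure of $\widehat{\mathcal A}(0)$ --- correctly observing that the neutral eigenvector of the $(\tilde\theta,j_0)$-block is $(1,b'(1))$, on which $w$ vanishes, so the symbol of $w$ acquires an extra $|\xi|$ at low frequency --- and you fold this into $M(t)$ together with the $k=3,4$ norms. The paper instead runs the bootstrap only on $k=0,1,2$, then recovers the $k=3,4$ rates from the long/short-wave splitting, and finally obtains the $w$-rate \emph{a posteriori} from the explicit damped equation $\partial_t\Xi+(\tfrac{2\gamma}{3}+\mathcal C b)\Xi=O(\nabla u,\Delta\theta,\Delta j_0,S^3,S^4)$ for $\Xi:=\gamma\tilde\theta-bj_0$. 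The two viewpoints are equivalent --- your spectral remark is exactly the statement that $\Xi$ spans the damped eigenspace at $\xi=0$ --- but your packaging (one $M(t)$, closed linearly via the a priori $H^4$ smallness from Theorem~\ref{Theorem 1.1}) is a bit more streamlined, while the paper's staged argument keeps the bootstrap smaller and makes the provenance of each rate more transparent.
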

\begin{remark}
The decay rate is optimal in the low order derivatives since it is the same as the heat kernel. We believe that the decay rate for the higher order derivatives is not optimal.
\end{remark}

\begin{remark}
From Theorem \ref{Theorem 1.2}, we know that the time decay  rate of the combination $b^\prime(1)(\theta-1)-(I_0-b(1))$ is  $(1+t)^{-\frac{5}{4}-\frac{k}{2}}$,  which is faster than the decay rates of
 both $\theta-1$ and $I_0-b(1)$. It implies that the interaction between the radiation effect and the motion of the macro fluid will produce the cancellation mechanism.
\end{remark}

It is noted that the global well-posedness and large time behavior of solution to the system \eqref{1.2} in the vanishing heat conductivity parameter case, i.e. $\kappa=0$, also can be established. Precisely, we also consider the following system of equations
\begin{eqnarray}\label{1.9}
\left\{ \begin{array}{llllll}
 \partial_t \rho + {\rm div} (\rho \vec u)=0, \\[2mm]
 \rho \partial_t \vec u + (\rho \vec u \cdot \nabla) \vec u + \nabla P = {\rm div} \mathbb{T} - \frac{1}{3\mathcal{C}}\nabla I_0,\\[2mm]
\frac{3}{2}\rho \partial_t \theta + P {\rm div} \vec u + \mathcal{L}\sigma_a(b(\theta)-I_0)
= (\mathbb{T} \cdot \nabla)\cdot \vec u +\frac{1}{3\mathcal{C}}\vec u \cdot \nabla I_0 -\frac{3}{2}\rho \vec u \cdot \nabla \theta,\\[2mm]
\frac{1}{\mathcal{C}} \partial_t I_0 - \frac{1}{3\mathcal{L}(\sigma_a+\sigma_s)} \triangle I_0 =\mathcal{L} \sigma_a(b(\theta)-I_0).
\end{array}
\right.
\end{eqnarray}
The main results are included in the following theorem.

\begin{theorem}[The case of $\kappa=0$]\label{Theorem 1.3}
Assume that there exists a small positive constant $\varepsilon_0$, such that the initial data satisfies
\begin{equation}
\|(\rho_0-1,\vec u_0,\theta_0-1,I_0^0-b(1))\|_{H^4(\mathbb{R}^3)}
 \leq
  \varepsilon_0.
\end{equation}
Then the initial value problem \eqref{1.9} and \eqref{1.3} admits a unique global solution $(\rho, \vec u, \theta, I_0)$, which satisfies
\begin{equation*}
\begin{split}
\rho-1,\theta-1
 \in
 &
  C^0(0,\infty; H^4(\mathbb R^3))\cap C^1(0,\infty; H^3(\mathbb R^3)),\\[2mm]
\vec u,I_0-b(1)
 \in
 &
  C^0(0,\infty; H^4(\mathbb R^3))\cap C^1(0,\infty; H^2(\mathbb R^3)).
\end{split}
\end{equation*}
Moreover, there exists a positive constant $C^\prime_0$ such that for any $t\geq 0$, it holds
\begin{equation*}
\begin{split}
&
\|(\rho-1,\vec u, \theta-1,I_0-b(1))(t)\|_{H^4(\mathbb{R}^3)}^2\\
&
+
\int_0^t
\Big(
\|[b^\prime(1)(\theta-1)-(I_0-b(1))](\tau)\|_{H^4(\mathbb{R}^3)}^2
\\
& \hspace{5mm}  +\|\nabla (\rho,\theta-1)(\tau)\|_{H^3(\mathbb R^3)}^2
+
\|\nabla (\vec u,I_0-b(1))(\tau)\|_{H^4(\mathbb{R}^3)}^2 \Big)
{\rm d}\tau\\
\leq
&
C^\prime_0
\|(\rho_0-1,\vec u_0,\theta_0-1,I_0^0-b(1))\|_{H^4(\mathbb{R}^3)}^2.
\end{split}
\end{equation*}
Furthermore, we assume that $\|(\rho_0-1, \vec u_0, \theta_0-1,I_0^0-b(1))\|_{L^1} $ is bound. Then there exists a positive constant $\tilde C^\prime_0$ such that, for all $t\geq 0$
\begin{equation*}
\begin{split}
\|\nabla^k(\rho-1,\vec u, \theta-1,I_0-b(1))(t)\|_{L^2(\mathbb{R}^3)}
 \leq
 &
  \tilde C^\prime_0(1+t)^{-\frac{3}{4}-\frac{k}{2}},\ \ \ \ {for}\ k=0,1,2,\\[2mm]
\|\nabla^k(\rho-1, \vec u, \theta-1,I_0-b(1))(t)\|_{L^2(\mathbb{R}^3)}
 \leq
 &
  \tilde C^\prime_0(1+t)^{-\frac{7}{4}},\ \ \ \ \ \ \ for\ k=3,4,\\[2mm]
\end{split}
\end{equation*}
and
\begin{equation*}
\begin{split}
\|\partial_t(\rho-1,\vec u)(t)\|_{L^2(\mathbb{R}^3)}
 \leq
 &
  \tilde C^\prime_0(1+t)^{-\frac{5}{4}},\\[2mm]
\|\partial_t(\theta-1,I_0-b(1))(t)\|_{L^2(\mathbb{R}^3)}
 \leq
 &
  \tilde C^\prime_0(1+t)^{-\frac{3}{4}},\\[2mm]
\|\nabla^k [b^\prime(1)(\theta-1)-(I_0-b(1))]\|_{L^2(\mathbb{R}^3)}
 \leq
 &
  \tilde C^\prime_0(1+t)^{-\frac{5}{4}-\frac{k}{2}},\ \ \ for\ k=0,1,2.\end{split}
\end{equation*}
\end{theorem}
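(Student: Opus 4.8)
The strategy is to mirror the proof of Theorems~\ref{Theorem 1.1} and \ref{Theorem 1.2}, tracking carefully which structural properties survive when the thermal conductivity term $-\kappa\triangle\theta$ is dropped. Writing $\tilde\rho=\rho-1$, $\tilde\theta=\theta-1$, $j_0=I_0-b(1)$ and linearizing \eqref{1.9} around $(1,\vec0,1,b(1))$, one obtains a system in which the $\tilde\theta$-equation now has no parabolic smoothing of its own: the only dissipative mechanisms acting on the temperature are the ``damping'' combination $b'(1)\tilde\theta-j_0$ coming from the radiative source $\mathcal L\sigma_a(b(\theta)-I_0)$, and the coupling to $\mathrm{div}\,\vec u$ through the pressure. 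Crucially, the density $\tilde\rho$ and the temperature $\tilde\theta$ now play symmetric roles — neither is parabolically regularized — which is why the dissipation in the stated a priori estimate contains $\|\nabla(\rho,\theta-1)\|_{H^3}^2$ (one derivative less, and only $H^3$) rather than the $H^4$ control of $\nabla(\vec u,\theta-1,I_0-b(1))$ enjoyed in Theorem~\ref{Theorem 1.1}. The plan is: (i) set up the energy functional $\mathcal E(t)=\|(\tilde\rho,\vec u,\tilde\theta,j_0)(t)\|_{H^4}^2$ and the corresponding dissipation $\mathcal D(t)$ matching the left-hand side of the claimed inequality; (ii) perform the standard $H^4$ energy estimates on \eqref{1.9}, using the viscosity $\mathrm{div}\,\mathbb T$ to control $\|\nabla\vec u\|_{H^4}^2$ and the Laplacian in the $I_0$-equation to control $\|\nabla j_0\|_{H^4}^2$; (iii) recover the dissipation of $\tilde\rho$ and $\tilde\theta$ by the usual trick of testing the momentum equation against $\nabla\tilde\rho$ (giving $\|\nabla\tilde\rho\|_{H^3}^2$ up to lower-order terms in $\|\nabla\vec u\|_{H^4}^2$) and exploiting the damping term together with the $j_0$-dissipation to get $\|\nabla\tilde\theta\|_{H^3}^2$; (iv) close the nonlinear estimates by the smallness of $\mathcal E$, proving $\mathcal E(t)+\int_0^t\mathcal D\le C'_0\mathcal E(0)$ and hence global existence by continuation.

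The most delicate point, flagged already in the Remark following Theorem~\ref{Theorem 1.1}, is the $L^1_tL^1_x$-type estimate of the cross term $\tilde\theta\,j_0$, which is needed to close the energy estimate for the temperature equation in the absence of $\kappa$. Here the classical energy method fails and one must use the Littlewood–Paley decomposition: split $(\tilde\rho,\vec u,\tilde\theta,j_0)$ into low-, medium- and high-frequency pieces. In the high-frequency region the damping combination $b'(1)\tilde\theta-j_0$ and the diffusion in $\vec u$ and $I_0$ give enough dissipation to absorb the term; in the medium-frequency region every quantity is controlled by the full dissipation; and in the low-frequency region one argues as in \cite{WXY}, forming a suitable combination of the low-frequency parts of $\tilde\rho,\tilde\theta,j_0$ so that the spectral structure of the linearized operator (which, after eliminating $j_0$, behaves like a damped/diffusive system in $(\tilde\rho,\vec u,\tilde\theta)$) furnishes the missing bound. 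I expect this frequency-localized bookkeeping — and in particular verifying that dropping $\kappa$ does not destroy the low-frequency decay structure — to be the main obstacle; everything else is a routine adaptation of the $\kappa>0$ argument.

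For the decay rates, the plan is the now-standard combination of spectral analysis of the linearized semigroup with the nonlinear energy estimates. First I would analyze the linear part of \eqref{1.9}: compute the eigenvalues of the $4\times4$ symbol as $|\xi|\to0$, check that all modes decay, and derive $L^1$–$L^2$ decay estimates of the form $\|\nabla^k e^{t\mathcal A}U_0\|_{L^2}\lesssim(1+t)^{-3/4-k/2}\|U_0\|_{L^1\cap L^2}$, together with the faster rate $(1+t)^{-5/4-k/2}$ for the cancellation quantity $b'(1)\tilde\theta-j_0$, reflecting that this combination corresponds to a fast-decaying (strongly damped) eigenmode. Then, via Duhamel's formula and the already-established uniform-in-time bound on $\mathcal E$, I would bound the nonlinear forcing in $L^1\cap L^2$ and run the standard bootstrap/interpolation argument (as in \cite{WXY}) to propagate the linear decay rates to the full solution, obtaining the stated rates for $\nabla^k(\tilde\rho,\vec u,\tilde\theta,j_0)$ ($k=0,1,2$), the slower non-optimal rates for $k=3,4$ (caused by using $\|\nabla^k\cdot\|_{L^2}\le\|\nabla^{k-1}\cdot\|_{L^2}^{1-\alpha}\|\nabla^{k+1}\cdot\|_{L^2}^{\alpha}$-type interpolation against the energy bound), and finally the time-derivative rates by reading $\partial_t(\tilde\rho,\vec u)$ and $\partial_t(\tilde\theta,j_0)$ off the equations \eqref{1.9} and inserting the spatial-derivative decay just obtained. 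The role of the vanishing $\kappa$ here is benign: since the temperature never had better than $L^2$-decay at rate $(1+t)^{-3/4}$ even when $\kappa>0$ (the time-derivative bound for $\partial_t(\theta-1,I_0-b(1))$ is the same $(1+t)^{-3/4}$ in both Theorems~\ref{Theorem 1.2} and \ref{Theorem 1.3}), dropping the conductivity costs nothing in the decay statement, only in the regularity index of the dissipation norm in the a priori estimate.
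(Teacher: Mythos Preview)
Your proposal is correct and matches the paper's approach: the paper does not give a separate full proof of Theorem~\ref{Theorem 1.3} but instead indicates (in Remarks 1.4, 1.5 and 2.1) that the machinery of Sections~3--4 carries over with exactly the modifications you describe --- $\tilde\theta$ inherits only $H^3$ dissipation via the damping combination $b'(1)\tilde\theta-j_0$ and the diffusion of $j_0$, and the low-frequency eigenvalue expansion (computed explicitly in Remark~1.5 for $\kappa=0$) retains the same parabolic structure, so the decay procedure is unchanged. Your identification of the frequency-localized treatment of the cross term and the recovery of $\|\nabla\tilde\theta\|_{H^3}^2$ from the damped mode plus $j_0$-dissipation is precisely the content of the paper's Remark~1.4.
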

\begin{remark}
In order to establish the global existence of solution to the system \eqref{1.9}, some essential observations of the structure from  the system \eqref{1.9} are needed. The main observation is that the dissipative property of $\theta-1$ comes from two aspects: the damping effect of the combination $b^\prime(1)(\theta-1)-(I_0-b(1))$ and the dissipative effect of $I_0-b(1)$. It is different from the work \cite{DM} for Navier-Stokes-Fourier system without the heat conductivity, where the entropy dosen't dissipate so it has no decay-in-time.
\end{remark}

\begin{remark}
 The results in Theorem \ref{Theorem 1.3} show that the decay properties of the solution to \eqref{1.9} are the same as those of \eqref{1.2}. In fact, for the case of $\kappa=0$, the eigenvalues of the matrix $A(\xi)$ in \eqref{3.311} with asymptotic expansion near 0 can be formulated as
\begin{equation*}
\pm i
\rho
\sqrt{\frac{8\gamma+15\mathcal{C}b}{6\gamma+9\mathcal{C}b}}
+
\Big[
\frac{9\gamma}{(6\gamma+9\mathcal{C}b)^2}
+
\frac{2a\gamma(\gamma+3\mathcal{C}b)}{(8\gamma+15\mathcal{C}b)(2\gamma+3\mathcal{C}b)}
+
\frac{\nu}{2}
\Big]
\varrho^2
+
O(\varrho^3),
\end{equation*}
and
\begin{equation*}
\frac{6a\gamma}{15\mathcal{C} b + 8\gamma}\varrho^2+O(\varrho^3),
\ \ \ \
\frac{2\gamma}{3}+\mathcal{C} b
+
\frac{9\mathcal{C} b a-\gamma}{6\gamma+9\mathcal{C} b}
\varrho^2
+
O(\varrho^3).
\end{equation*}
It enables us to follow the same procedure as in the heat conductivity case to obtain the large time behavior of solution to \eqref{1.9}.
\end{remark}

\subsection{Notations}

Throughout this paper, $C$ denotes the generic positive constant depending only on the initial data and physical coefficients but independent of time $t$. For two quantities $a$ and $b$, we will employ the notation $a\lesssim b$ to mean that $a\leq C b$ for a generic positive constant $C$. And $a\sim b$ means $C^{-1}|b|\leq |a| \leq C|b|$.
Moreover, the norms in nonhomogeneous Sobolev spaces $H^s(\mathbb{R}^3)$ and $W^{s,p}(\mathbb{R}^3)$ are denoted by $\|\cdot\|_{H^s}$ and $\|\cdot\|_{W^{s,p}}$ respectively for $s\geq 0$ and $p\geq 1$. $\|\cdot\|_{\dot H^s}$ denotes the norm in homogeneous Sobolev space $\dot H(\mathbb R^3)$. As usual, $(\cdot|\cdot)$ denotes the inner-product in $L^2(\mathbb{R}^3)$. $\nabla^m$ with an integer $m\geq 0$ stands for the usual any spatial derivatives of order $m$. In addition, we apply the Fourier transform to the variable $x\in\mathbb{R}^3$ by $\widehat f(\xi,t)=\int_{\mathbb{R}^3}f(x,t){\rm e}^{-\sqrt{-1}x\cdot \xi}{\rm d}x$ and the inverse Fourier transform to the variable $\xi\in \mathbb{R}^3$ by $(\mathcal{F}^{-1}\widehat f)(x,t)=(2\pi)^{-3}\int_{\mathbb{R}^3}\widehat f(\xi,t){\rm e}^{\sqrt{-1}x\cdot \xi}{\rm d}\xi$.

\vspace{2mm}

The rest of this paper is organized in the following way. In Section 2, we reformulate the problem into a small perturbation frame. In Section 3, we derive a priori estimates in different frequency regimes and prove the global existence of the solution. The large time behavior of the solution is derived in Section 4. In Appendix, we give the definition of homogeneous Besov space and some useful inequalities.


\section{Reformulations}

Now, we linearize the system of equations \eqref{1.2} around the equilibrium $(1,\vec{0},1,b(1))$. Set $\tilde{\rho}= \rho-1$, $\tilde{u}=\vec u$, $\tilde{\theta}=\theta-1$ and $j_0 =I_0-b(1)$, we obtain
  \begin{eqnarray}\label{2.1}
\left\{ \begin{array}{llllll}
 \partial_t \tilde{\rho} + {\rm div}  \tilde{u} =\tilde S^1, \\[2mm]
 \partial_t  \tilde{u}  +  \nabla \tilde{\rho}+ \nabla \tilde{\theta} +\frac{1}{3\mathcal{C}} \nabla j_0 - {\rm div}\mathbb T=\tilde S^2 ,\\[2mm]
\partial_t \tilde{\theta }+ \frac{2}{3} {\rm div} \tilde{u }- \frac{2}{3} \kappa\triangle \tilde{\theta}
+\frac{2}{3}\mathcal{L}\sigma_a(b'(1)\tilde{\theta}-j_0)=\tilde S^3,\\[2mm]
 \partial_t j_0 - \frac{\mathcal{C}}{3\mathcal{L}(\sigma_a+\sigma_s)} \triangle j_0
 -
 \mathcal{C}\mathcal{L} \sigma_a(b'(1)\tilde{\theta}-j_0)=\tilde S^4,
\end{array}
\right.
\end{eqnarray}
where $(\tilde S^1,\tilde S^2,\tilde S^3,\tilde S^4)$ are the nonlinear terms with
\begin{equation*}
\begin{split}
\tilde S^1
=
&
-
{\rm div}(\tilde \rho \tilde u),\\
\tilde S^2
=
&
   -
   (\tilde u\cdot \nabla)\tilde u
   -
   g(\tilde \rho)\nabla \tilde \rho
   -
   h(\tilde \rho)\tilde \theta \nabla \tilde \rho
   +
   g(\tilde \rho){\rm div}\mathbb T
   -
   \frac{1}{3\mathcal{C}}g(\tilde \rho)\nabla j_0,\\
\tilde S^3
=
&
   -\frac{2}{3}\tilde \theta{\rm div}\tilde u
   +
   \frac{2}{3}\kappa g(\tilde \rho)\Delta \tilde \theta
   -
   \frac{2}{3}\mathcal{L}\sigma_a h(\tilde \rho)
   (b(\tilde \theta+1)-b(1)-b^\prime(1)\tilde \theta)\\
&
  -
  \frac{2}{3}\mathcal{L}\sigma_ag(\tilde \rho)(b^\prime(1)\tilde \theta-j_0)
  +
  \frac{2}{3}h(\tilde \rho)(\mathbb T\cdot\nabla)\cdot \tilde u
  +
  \frac{2}{9\mathcal{C}}h(\tilde \rho)
  \tilde u\cdot\nabla j_0
  -
  \tilde u\cdot\nabla\tilde \theta,\\
\tilde S^4
=
&
\mathcal{C}\mathcal{L}\sigma_a(b(\tilde \theta+1)-b(1)-b^\prime(1)\tilde \theta),
\end{split}
\end{equation*}
and
\begin{equation*}
g(\tilde \rho)
 =
  \frac{1}{\tilde \rho+1}-1,
\ \ \ \
h(\tilde \rho)
 =
  \frac{1}{\tilde \rho+1}.
\end{equation*}
The initial data is given accordingly as follows.
\begin{equation}\label{2.2}
\begin{split}
(\tilde \rho, \tilde u,\tilde \theta, j_0)(x,0)
=
&
(\tilde \rho^0, \tilde u^0,\tilde \theta^0, j_0^0)\\
:=
&
(\rho_0-1, \vec u_0,\theta_0-1, I_0^0-b(1))
\rightarrow
  (0,\vec 0, 0, 0),
  \ \
  {\rm as}
   \
   |x|\rightarrow +\infty.
\end{split}
\end{equation}


Next, we will consider the global existence of the solution $(\tilde\rho,\tilde u,\tilde\theta,j_0)$ to (\ref{2.1}) around the steady state $(0, \vec 0, 0, 0)$.
To this end, we define the function space used in this paper.
\begin{equation*}
\begin{split}
X(0,T)=
 \Big\{
 (\tilde\rho,\tilde u,\tilde\theta,j_0)|
 &\  \tilde\rho\in C^0(0,T;H^4(\mathbb{R}^3))\cap C^1(0,T;H^3(\mathbb{R}^3)),\\
 &\  \tilde u,\tilde\theta,j_0\in C^0(0,T;H^4(\mathbb{R}^3))\cap C^1(0,T;H^2(\mathbb{R}^3)),\\
 &\ \nabla \tilde \rho\in L^2(0,T;H^3(\mathbb{R}^3)),\ \nabla \tilde u,\nabla \tilde \theta,\nabla j_0\in L^2(0,T;H^4(\mathbb{R}^3))
 \Big\}.
\end{split}
\end{equation*}

By the standard continuity argument, the global existence of solutions to the Cauchy problem \eqref{2.1} and \eqref{2.2} will be obtained by combining the local existence result with some uniform a priori estimates in Sobolev space $H^4(\mathbb{R}^3)$.

Then, we state the local existence of smooth solutions to the Cauchy problem \eqref{2.1} and \eqref{2.2} as follows.
\begin{proposition}[Local existence]\label{Proposition 2.1}
Let $(\tilde\rho_0,\tilde u_0,\tilde\theta_0,j_0^0)\in H^4(\mathbb R^3)$ such that
\begin{equation*}
\inf\limits_{x\in\mathbb{R}^3}
\{
\tilde \rho^0+1,\tilde\theta^0+1
\}>0.
\end{equation*}
Then there exists a constant $T_0>0$ depending on $\|(\tilde\rho^0,\tilde u^0,\tilde\theta^0,j_0^0)\|_{H^4(\mathbb R^3)}$, such that the initial-value problem \eqref{2.1} admits a unique solution $(\tilde\rho,\tilde u,\tilde\theta,j_0)\in X(0,T_0)$, which satisfies
\begin{equation*}
\inf\limits_{x\in\mathbb{R}^3,0\leq t\leq T_0}
\{
\tilde\rho+1,\tilde\theta+1
\}>0.
\end{equation*}
\end{proposition}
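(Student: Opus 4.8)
\medskip
\noindent\textbf{Proof proposal.}
The system \eqref{2.1} couples a pure transport equation for $\tilde\rho$ (with no regularizing mechanism) to a uniformly parabolic block for $(\tilde u,\tilde\theta,j_0)$, so the plan is the classical linearization--iteration scheme carried out with two norms: a uniform a~priori bound in the strong norm $H^4$ and a contraction in a weaker norm. Fix $M>2\|(\tilde\rho^0,\tilde u^0,\tilde\theta^0,j_0^0)\|_{H^4(\mathbb{R}^3)}$ and $c_0\in(0,1/2)$ with $\inf_{x}\{\tilde\rho^0+1,\tilde\theta^0+1\}>2c_0$. Set $v^{(0)}\equiv(\tilde\rho^0,\tilde u^0,\tilde\theta^0,j_0^0)$, and given $v^{(n)}=(\tilde\rho^{(n)},\tilde u^{(n)},\tilde\theta^{(n)},j_0^{(n)})$ define $v^{(n+1)}$ by freezing all coefficients at $v^{(n)}$: $\tilde\rho^{(n+1)}$ solves the linear transport equation $\partial_t\tilde\rho^{(n+1)}+\tilde u^{(n)}\cdot\nabla\tilde\rho^{(n+1)}+(1+\tilde\rho^{(n)})\,{\rm div}\,\tilde u^{(n)}=0$; the genuinely second-order ``corrector'' terms $g(\tilde\rho)\,{\rm div}\,\mathbb{T}$ in $\tilde S^2$ and $\frac{2}{3}\kappa g(\tilde\rho)\Delta\tilde\theta$ in $\tilde S^3$ are kept on the left, so that $\tilde u^{(n+1)}$ solves the linear uniformly parabolic system $\partial_t\tilde u^{(n+1)}-h(\tilde\rho^{(n)})\,{\rm div}\,\mathbb{T}(\tilde u^{(n+1)})=(\text{terms of order}\le 1\text{ in }v^{(n)})$, while $(\tilde\theta^{(n+1)},j_0^{(n+1)})$ solves the analogous $2\times 2$ linear parabolic system with the damping terms $b'(1)\tilde\theta-j_0$ treated as a zeroth-order coupling. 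Each linear problem is uniformly parabolic because $h(\tilde\rho^{(n)})=(1+\tilde\rho^{(n)})^{-1}$ is bounded above and below once $\|\tilde\rho^{(n)}\|_{H^4}\le M$ and $\tilde\rho^{(n)}+1\ge c_0$, and is solvable in $X(0,T)$ for any $T>0$ by standard linear theory.

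Next I would establish the uniform a~priori estimate together with preservation of positivity: assuming $\sup_{[0,T]}\|v^{(n)}(t)\|_{H^4(\mathbb{R}^3)}\le M$ and $\inf_{[0,T]\times\mathbb{R}^3}\{\tilde\rho^{(n)}+1,\tilde\theta^{(n)}+1\}\ge c_0$, I show the same bounds for $v^{(n+1)}$ once $T=T_0(M,c_0)$ is small. For $\tilde\rho^{(n+1)}$ this is the $H^4$ energy estimate for the transport equation, with the commutators $[\nabla^k,\tilde u^{(n)}\cdot\nabla]\tilde\rho^{(n+1)}$ controlled by Moser and Kato--Ponce type inequalities (using $H^4(\mathbb{R}^3)\hookrightarrow W^{1,\infty}(\mathbb{R}^3)$), giving $\|\tilde\rho^{(n+1)}(t)\|_{H^4}\lesssim(\|\tilde\rho^0\|_{H^4}+Mt)\,e^{CMt}$; for the parabolic components it is the standard parabolic energy estimate bounding $\sup_{[0,T]}\|\cdot\|_{H^4}^2+\int_0^T\|\nabla(\cdot)\|_{H^4}^2\,{\rm d}\tau$, where the nonlinear sources --- products of $H^4$ functions (an algebra on $\mathbb{R}^3$) with the smooth functions $g,h$ and the quadratic Taylor remainder $b(\tilde\theta+1)-b(1)-b'(1)\tilde\theta$ of quantities bounded in $L^\infty$ --- are bounded polynomially in $M$. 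Positivity is preserved since $\|\tilde\rho^{(n+1)}-\tilde\rho^0\|_{L^\infty}\lesssim T\,P(M)$ along the characteristics of $\tilde u^{(n)}$ and $\|\tilde\theta^{(n+1)}-\tilde\theta^0\|_{L^\infty}\lesssim\int_0^t\|\partial_t\tilde\theta^{(n+1)}\|_{H^2}\,{\rm d}\tau\lesssim\sqrt{T}\,P(M)$, both small for $T_0$ small; hence the induction closes and $(v^{(n)})_n$ is bounded in $C^0(0,T_0;H^4(\mathbb{R}^3))$ with the additional parabolic integrability appearing in $X(0,T_0)$.

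Finally I would prove convergence by a contraction estimate for $w^{(n+1)}:=v^{(n+1)}-v^{(n)}$ in the weaker norm $\|w\|_{Y(0,T)}:=\|w\|_{L^\infty(0,T;L^2)}+\|(\text{parabolic part of }w)\|_{L^2(0,T;H^1)}$. The difference $w^{(n+1)}$ solves a linear system with coefficients evaluated at $v^{(n)}$ and right-hand sides linear in $w^{(n)}=v^{(n)}-v^{(n-1)}$ with coefficients bounded in terms of $M$; the only point needing care is the transport difference, where $(\tilde u^{(n)}-\tilde u^{(n-1)})\cdot\nabla\tilde\rho^{(n)}$ is handled using $\nabla\tilde\rho^{(n)}\in L^\infty$. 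A direct $L^2$ energy estimate plus Gr\"onwall then gives $\|w^{(n+1)}\|_{Y(0,T)}\le C(M)\sqrt{T}\,\|w^{(n)}\|_{Y(0,T)}$, so for $T_0$ possibly smaller the iteration is a contraction in $Y(0,T_0)$ and $v^{(n)}\to v$ there. Interpolating this with the uniform $H^4$ bound yields $v^{(n)}\to v$ in $C^0(0,T_0;H^{s}(\mathbb{R}^3))$ for every $s<4$, which suffices to pass to the limit in the nonlinear terms; weak-$*$ compactness together with standard parabolic regularity (and a Bona--Smith type argument for the time continuity) places the limit in $X(0,T_0)$ and keeps the lower bounds $\tilde\rho+1,\tilde\theta+1\ge c_0$. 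Uniqueness follows from the same $L^2$--Gr\"onwall estimate applied to the difference of two solutions. The step I expect to be the main obstacle is reconciling the hyperbolic, derivative-losing density equation with the parabolic block: one must (i) keep the genuinely second-order corrector terms inside the linear parabolic operators rather than in the source --- otherwise two derivatives are lost --- and (ii) perform the contraction in a norm strictly weaker than the a~priori bound, since the transport equation provides no smoothing to compensate the loss.
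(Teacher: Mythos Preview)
Your proposal is correct and is precisely the ``standard iteration arguments and fixed point theorem'' that the paper invokes; in fact the paper does not give a proof at all, merely citing \cite{Danchin-Ducomet-2} and omitting the details, so your sketch is considerably more informative than the original. The key structural points you identify---keeping the $g(\tilde\rho)\,{\rm div}\,\mathbb T$ and $\kappa g(\tilde\rho)\Delta\tilde\theta$ terms inside the linear parabolic operator, and contracting in a norm weaker than the uniform $H^4$ bound to accommodate the derivative-losing transport equation for $\tilde\rho$---are exactly what the referenced construction in \cite{Danchin-Ducomet-2} relies on.
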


\begin{proof}
The proof can be done by using the standard iteration arguments and fixed point theorem. One also refer to \cite{Danchin-Ducomet-2} (see Section 2.1). We omit the details for simplicity of presentation.
\end{proof}

The following proposition gives some uniform a priori estimates of smooth solutions to \eqref{2.1}, which are the key parts in the proof of Theorem \ref{Theorem 1.1}.

\begin{proposition}[A priori estimates]\label{Proposition 2.2}
Let $(\tilde \rho^0,\tilde u^0,\tilde \theta^0,j_0^0)\in H^4(\mathbb{R}^3)$. Suppose the initial value problem \eqref{2.1} and \eqref{2.2} has a solution $(\tilde \rho,\tilde u,\tilde \theta,j_0)\in X(0,T)$, where $T$ is a positive constant. Then there exist a sufficient small positive constant $\delta$ and a constant $C_1>0$, which are independent of $T$, such that if the initial data
satisfies
\begin{eqnarray}\label{2.3}
\sup\limits_{0\leq t\leq T}\|(\tilde\rho,\tilde u,\tilde\theta,j_0)(t)\|_{H^4(\mathbb R^3)} \leq \delta,
\end{eqnarray}
then for any $t\in [0,T]$,  the following estimate holds true:
\begin{equation}\label{2.4}
\begin{split}
&
\|(\tilde\rho,\tilde u,\tilde\theta,j_0)(t)\|_{H^4(\mathbb R^3)}^2
 +
\int_0^t
\Big(\|(b^\prime(1)\tilde\theta-j_0)(\tau)\|_{H^4(\mathbb R^3)}^2
\\&\hspace{1cm} +\|\nabla \tilde \rho (\tau)\|_{H^3(\mathbb R^3)}^2
+
\|\nabla (\tilde u,\tilde\theta,j_0)(\tau)\|_{H^4(\mathbb R^3)}^2
\Big)
{\rm d}\tau\\
\leq
&
C_1\|(\tilde\rho^0,\tilde u^0,\tilde\theta^0,j_0^0)(t)\|_{H^4(\mathbb R^3)}^2.
\end{split}
\end{equation}
\end{proposition}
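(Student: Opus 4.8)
The plan is to run a weighted energy method at each derivative order $0\le k\le 4$, assemble the resulting identities into a Lyapunov functional $\mathcal E(t)$ equivalent to $\|(\tilde\rho,\tilde u,\tilde\theta,j_0)(t)\|_{H^4}^2$, and then close \eqref{2.4} by a continuity argument under the smallness hypothesis \eqref{2.3}. Concretely, I would apply $\nabla^k$ to \eqref{2.1}, test the first two equations against $(\nabla^k\tilde\rho,\nabla^k\tilde u)$ and the last two against $\bigl(\tfrac{3\mathcal C b'(1)}{2}\nabla^k\tilde\theta,\ \nabla^k j_0\bigr)$; the weight $\tfrac{3\mathcal C b'(1)}{2}$ is chosen precisely so that the coupling terms $\tfrac{2}{3}\mathcal L\sigma_a(b'(1)\tilde\theta-j_0)$ and $-\mathcal C\mathcal L\sigma_a(b'(1)\tilde\theta-j_0)$ recombine into the nonnegative quantity proportional to $\|\nabla^k(b'(1)\tilde\theta-j_0)\|_{L^2}^2$. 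In this way one controls $\tfrac{d}{dt}\|\nabla^k(\tilde\rho,\tilde u,\tilde\theta,j_0)\|_{L^2}^2$ together with the dissipation produced by the viscosity ($\|\nabla^{k+1}\tilde u\|_{L^2}^2$), the heat conduction ($\kappa\|\nabla^{k+1}\tilde\theta\|_{L^2}^2$), the radiation diffusion ($\|\nabla^{k+1}j_0\|_{L^2}^2$) and the radiative damping ($\|\nabla^k(b'(1)\tilde\theta-j_0)\|_{L^2}^2$). Summing over $k$ then produces every dissipative term appearing in \eqref{2.4} except $\|\nabla\tilde\rho\|_{H^3}^2$.

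\textbf{Density dissipation.} Since \eqref{2.1} carries no direct dissipation on $\tilde\rho$, I would recover $\|\nabla\tilde\rho\|_{H^3}^2$ in the classical way for compressible Navier--Stokes type systems: for $0\le k\le 3$, test $\nabla^k$ of the momentum equation against $\nabla^{k+1}\tilde\rho$ and use the continuity equation to rewrite $(\partial_t\nabla^k\tilde u\,|\,\nabla^{k+1}\tilde\rho)=\tfrac{d}{dt}(\nabla^k\tilde u\,|\,\nabla^{k+1}\tilde\rho)-\|\nabla^k{\rm div}\,\tilde u\|_{L^2}^2$ plus nonlinear remainders. This yields $\|\nabla^{k+1}\tilde\rho\|_{L^2}^2$ at the cost of the already available velocity dissipation and of lower-order cross terms. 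Adding $\eta\sum_{k=0}^{3}(\nabla^k\tilde u\,|\,\nabla^{k+1}\tilde\rho)$ with $\eta>0$ small to $\mathcal E$ keeps it equivalent to $\|(\tilde\rho,\tilde u,\tilde\theta,j_0)\|_{H^4}^2$ and closes the linear part, delivering an inequality $\tfrac{d}{dt}\mathcal E(t)+c\,\mathcal D(t)\le(\text{nonlinear terms})$, where $\mathcal D(t)$ is exactly the integrand on the left-hand side of \eqref{2.4}.

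\textbf{Nonlinear terms and the main obstacle.} The contributions of $\tilde S^1,\dots,\tilde S^4$ are handled by Moser-type product and commutator estimates, Gagliardo--Nirenberg and Sobolev embeddings, and the smallness \eqref{2.3}, so that, after integration in time, most of them are dominated by $\delta\int_0^t\mathcal D(\tau)\,d\tau$. The genuine difficulty --- the one highlighted in the remark following Theorem \ref{Theorem 1.1} --- is the time integral of the quadratic radiative source: since $\tilde S^4=\mathcal C\mathcal L\sigma_a\bigl(b(\tilde\theta+1)-b(1)-b'(1)\tilde\theta\bigr)$ behaves like $\tilde\theta^2$, the $(\tilde\theta,j_0)$-energy identity contains a term of the type $\int_0^t\!\!\int j_0\,\tilde\theta^2$, i.e.\ one needs an $L^1_tL^1_x$ bound on $\tilde\theta\,j_0$; but the dissipation at our disposal for $(\tilde\theta,j_0)$ lives only on $\nabla(\tilde\theta,j_0)$ and on the combination $b'(1)\tilde\theta-j_0$, so neither $\tilde\theta$ nor $j_0$ is controlled in $L^2_t$ (let alone $L^1_t$) at low frequencies, and the classical energy bookkeeping breaks down. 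This is the step I expect to be the crux.

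\textbf{Frequency decomposition and closing.} To handle it I would split every unknown, via a Littlewood--Paley cut-off, into low-, medium- and high-frequency parts. On medium and high frequencies $(\tilde\theta,j_0)$ is controlled by $\|\nabla(\tilde\theta,j_0)\|_{L^2}\in L^2_t$ and, on the high-frequency block, additionally decays through the diffusion, so the offending product is integrable in time there. On low frequencies one exploits the structure of the linearized symbol: $b'(1)\tilde\theta-j_0$ carries the $O(1)$ damping eigenvalue of the system (cf.\ the asymptotic expansions of the dispersion relation discussed in the remarks above), while the complementary combination of $(\tilde\theta,j_0)$ together with $(\tilde\rho,\tilde u)$ obeys a parabolic, heat-kernel-type bound; forming a suitable linear combination of the low-frequency components then furnishes the missing $L^1_tL^1_x$ estimate for $\tilde\theta\,j_0$ in terms of $\mathcal E(0)$ and a small multiple of $\int_0^t\mathcal D(\tau)\,d\tau$. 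With this in hand, integrating $\tfrac{d}{dt}\mathcal E+c\,\mathcal D\le(\text{nonlinear terms})$ in time and choosing $\delta$ sufficiently small gives $\mathcal E(t)+c\int_0^t\mathcal D(\tau)\,d\tau\le C\,\mathcal E(0)$, which is \eqref{2.4}; combined with Proposition \ref{Proposition 2.1} and a standard continuity argument this completes the proof.
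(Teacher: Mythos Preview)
Your overall plan matches the paper's: the crux is indeed the lack of $L^2_t$ control on $(\tilde\theta,j_0)$ at frequency zero, forcing a Littlewood--Paley splitting and, at low frequency, a change of variables that isolates the damped mode $b'(1)\tilde\theta-j_0$. Two points, however, deserve correction.

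First, the weight $\tfrac{3\mathcal{C}b'(1)}{2}$ on $\nabla^k\tilde\theta$ does yield the square $\|b'(1)\tilde\theta-j_0\|^2$ from the radiative coupling, but it \emph{destroys} the antisymmetric cancellation between $\nabla\tilde\theta$ in the momentum equation and $\tfrac23{\rm div}\,\tilde u$ in the temperature equation, which forces the weight $\tfrac32$; with your weight a leftover term $(\mathcal{C}b'(1)-1)\!\int{\rm div}\,\tilde u\cdot\tilde\theta$ appears, and it cannot be absorbed because $\|\tilde\theta\|_{L^2}$ is not in $\mathcal D$. The paper keeps the weight $\tfrac32$ (see \eqref{3.7}); the radiative terms then give the quadratic form $\gamma|\tilde\theta|^2+\mathcal Cb|j_0|^2-(b+\mathcal C\gamma)\tilde\theta j_0$, whose determinant is $-(b-\mathcal C\gamma)^2/4\le 0$, so no sign is available. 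This is why, second, the frequency decomposition is not a device to tame one nonlinear term but is built into the \emph{linear} analysis from the start: at high frequency ($k>k_1$) the diffusions $\kappa\|\Lambda\theta_k\|^2$, $a\|\Lambda j_{0,k}\|^2$ and $\nu\|\Lambda d_k\|^2$ dominate both the bad radiative cross term and the $u$--$j_0$ coupling (cf.\ \eqref{3.11}--\eqref{3.14}); at low frequency one passes to the explicit modes $\Theta=3\mathcal C\tilde\theta+2j_0$ (parabolic) and $\Xi=\gamma\tilde\theta-bj_0$ (damped) to obtain \eqref{3.46} and then Proposition~\ref{Proposition 3.4}, with the sources estimated in $\dot B^{-1}_{2,2}\hookleftarrow L^{6/5}$; and the intermediate band $k_0\le k\le k_1$ is closed by a Routh--Hurwitz/semigroup bound (Lemma~\ref{Lemma 3.1}). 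Your sketch is therefore on the right track, but the straight Sobolev energy identity does not close even at the linear level---the splitting has to come first, not as a patch for the $\tilde\theta^2$ source.
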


\begin{remark}
For the case that $\kappa=0$, the inequality \eqref{2.4} can be replaced by the following estimate.
\begin{equation*}
\begin{split}
&
\|(\tilde\rho,\tilde u,\tilde\theta,j_0)(t)\|_{H^4(\mathbb R^3)}^2
 +
\int_0^t
\Big(\|(b^\prime(1)\tilde\theta-j_0)(\tau)\|_{H^4(\mathbb R^3)}^2
\\&\hspace{1cm}+\|\nabla (\tilde \rho, \tilde\theta) (\tau)\|_{H^3(\mathbb R^3)}^2
+
\|\nabla (\tilde u,j_0)(\tau)\|_{H^4(\mathbb R^3)}^2
\Big)
{\rm d}\tau\\
\leq
&
C_1\|(\tilde\rho^0,\tilde u^0,\tilde\theta^0,j_0^0)(t)\|_{H^4(\mathbb R^3)}^2.
\end{split}
\end{equation*}
\end{remark}



\section{Global existence and uniqueness}

In this section, we will show the global-in-time existence and uniqueness of solution to the Cauchy problem \eqref{2.1} and \eqref{2.2}.
Actually, we only need to derive the key {\it a priori} estimates stated in Proposition \ref{Proposition 2.2}. Firstly, we drop the ``tilde" and rewrite the unknown functions as $(\rho,u,\theta,j_0)$ in \eqref{2.1} for simplicity of the notations. Then the system about $(\rho,u,\theta,j_0)$ can be written as follows.
\begin{eqnarray}\label{3.1}
\left\{ \begin{array}{llllll}
 \partial_t \rho + {\rm div}  u =S^1, \\[2mm]
 \partial_t  u  +  \nabla \rho+ \nabla \theta+\frac{1}{3\mathcal{C}} \nabla j_0 -{\rm div} \mathbb{T}=S^2,\\[2mm]
\partial_t \theta + \frac{2}{3} {\rm div} u- \frac{2\kappa}{3} \triangle \theta + \frac{2}{3}\mathcal{L}\sigma_a(b'(1) \theta-j_0)=S^3,\\[2mm]
 \partial_t j_0 - \frac{\mathcal{C}}{3\mathcal{L}(\sigma_a+\sigma_s)} \triangle j_0 - \mathcal{C}\mathcal{L} \sigma_a(b'(1)\theta-j_0)=S^4,\\[2mm]
(\rho, u, \theta, j_0)(x,t)|_{t=0}
 =
   ( \rho^0, u^0, \theta^0, j_0^0),
\end{array}
\right.
\end{eqnarray}
where the source terms $(S^1,S^2,S^3,S^4):=(\tilde S^1,\tilde S^2,\tilde S^3,\tilde S^4)$ and the initial data $(\rho_0, u_0, \theta_0, j_0^0)
:=(\tilde \rho^0, \tilde u^0,\tilde \theta^0, j_0^0)$. Motiveted by \cite{Danchin-Ducomet}, we adopt the following notations.
\begin{eqnarray}\label{3.2}
\Lambda=: (-\triangle)^{1/2},~~~d:=\Lambda^{-1} {\rm div} u.
\end{eqnarray}
One gets the identity $u=-\Lambda^{-1} \nabla d - \Lambda^{-1} {\rm div}(\Lambda^{-1} {\rm curl} u)$ together with ${\rm div} u=\Lambda d$ and $({\rm curl}u)_i^j=\partial_ju^i-\partial_iu^j$. Setting
$\nu =\lambda +2\mu$, $\gamma= \mathcal{L}\sigma_a b'(1)$, $a=\frac{\mathcal{C}}{3\mathcal{L}(\sigma_a+\sigma_s)}$ and $b=\mathcal{L}\sigma_a$,  we get from \eqref{3.1} that
\begin{eqnarray}\label{3.3}
\left\{ \begin{array}{llllll}
 \partial_t \rho + \Lambda d =S^1, \\[2mm]
 \partial_t  d - \Lambda \rho -\nu \triangle d - \Lambda \theta  -\frac{1}{3\mathcal{C}} \Lambda j_0=D,\\[2mm]
\partial_t \theta + \frac{2}{3} \Lambda d - \frac{2}{3} \kappa \triangle \theta  + \frac{2}{3} \gamma \theta - \frac{2}{3} b j_0=S^3,\\[2mm]
 \partial_t j_0- \mathcal{C}\gamma \theta - a \triangle j_0 + \mathcal{C} b j_0=S^4,\\[2mm]
 (\rho, d,\theta, j_0)(x,t)|_{t=0}
  =
   (\rho^0, d^0,\theta^0, j_0^0)(x),
\end{array}
\right.
\end{eqnarray}
while $\mathcal{P} u=\Lambda^{-1} {\rm curl} u$ where $\mathcal{P}$ is the projection operator on {\rm div}ergence-free vector fields. It satisfies
\begin{eqnarray}\label{3.4}
\left\{
\begin{array}{lc}
 \partial_t \mathcal{P} u - \mu \triangle \mathcal{P} u =\mathcal{P}S^2,\\[2mm]
 \mathcal{P}u(x,t)|_{t=0}
  =
   \mathcal{P} u^0(x),
\end{array}
\right.
\end{eqnarray}
with $D:=\Lambda^{-1}{\rm div}S^2$ and $d^0:=\Lambda^{-1} {\rm div} u^0$.
In fact, to derive the estimates of $u$, we only need to estimate $d$ and $\mathcal{P}u$.
In what follows, we will give the corresponding analysis by the means of the homogeneous Littlewood-Paley decomposition $(\dot \triangle_k)_{k\in \mathbb{Z}}$ (Pls. see Definition \ref{Def 5.1} in Appendix) and some Besov space techniques. For all $k\in \mathbb{Z}$,
applying the homogeneous frequency localized operator $\dot\Delta_k$ to \eqref{3.3} and \eqref{3.4} yields that
\begin{eqnarray}\label{3.5}
\left\{ \begin{array}{llllll}
 \partial_t \rho_k + \Lambda d_k =S^1_k, \\[2mm]
 \partial_t  d_k - \Lambda \rho_k -\nu \triangle d_k - \Lambda \theta_k  -\frac{1}{3\mathcal{C}} \Lambda j_{0,k}=D_k,\\[2mm]
\partial_t \theta_k + \frac{2}{3} \Lambda d_k - \frac{2}{3} \kappa \triangle \theta_k  + \frac{2}{3} \gamma \theta_k - \frac{2}{3} b j_{0,k}=S^3_k,\\[2mm]
 \partial_t j_{0,k}- \mathcal{C}\gamma \theta_k - a \triangle j_{0,k} + \mathcal{C} b j_{0,k}=S^4_k,\\[2mm]
 (\rho_k , d_k ,\theta_k, j_{0,k})(x,t)|_{t=0}
 =
  (\rho_k ^0, d_k^0,\theta_k^0, j_{0,k}^0)(x),
\end{array}
\right.
\end{eqnarray}
as well as
\begin{eqnarray}\label{3.6}
\left\{
\begin{array}{lc}
 \partial_t (\mathcal{P} u)_k - \mu \triangle (\mathcal{P} u)_k =(\mathcal{P} S^2)_k,\\[2mm]
 (\mathcal{P} u)_k (x,t)|_{t=0}
 =
  (\mathcal{P} u_0)_k(x).
\end{array}
\right.
\end{eqnarray}
Here $\rho_k:= \dot\triangle_k \rho$, $d_k: =\dot\triangle_k d$ and so on.

\subsection{Estimates in the high-frequency regimes}

In this subsection, we show the estimates of the solution to the linearized system of \eqref{3.5}-\eqref{3.6}.
From the system \eqref{3.5}, one can get
\begin{equation}\label{3.7}
\begin{split}
& \frac{1}{2}\frac{\rm d}{{\rm d} t}
\int_{\mathbb{R}^3}
\Big( \rho_k^2+ d_k^2+\frac{3}{2}\theta_k^2+ j_{0,k}^2\Big)
{\rm d}x\\
&
+
\int_{\mathbb{R}^3}
\Big(\nu |\Lambda d_k|^2 + \kappa |\Lambda \theta_k|^2+\gamma|\theta_k|^2 + a|\Lambda j_{0,k}|^2+ \mathcal{C} b|j_{0,k}|^2 \Big)
{\rm d}x\\
=
& (b+\mathcal{C}\gamma)\int_{\mathbb{R}^3} j_{0,k} \theta_k{\rm d}x
  +
  \frac{1}{3\mathcal{C}}\int_{\mathbb{R}^3}\Lambda j_{0,k}d_k{\rm d}x\\
&
 +
  \int_{\mathbb{R}^3}
  \Big(
  \rho_k S^1_k
  +
  d_k D_k
  +
  \frac{3}{2}
  \theta_k S^3_k
  +
  j_{0,k} S^4_k
  \Big)
  {\rm d}x.
\end{split}
\end{equation}
At the same time, by the first two equations $\eqref{3.5}_1$-$\eqref{3.5}_2$, we have
\begin{equation}\label{3.8}
\begin{split}
&
 \frac{\rm d}{{\rm d} t}
 \int_{\mathbb{R}^3}
 \left(\nu |\nabla \rho_k|^2 -  \Lambda \rho_k d_k \right)
 {\rm d}x\\
 =
 &
 \int_{\mathbb{R}^3}
  \left(|\Lambda d_k|^2 -|\Lambda \rho_k|^2 -\Lambda\rho_k\Lambda\theta_k -\frac{1}{3\mathcal{C}}\Lambda\rho_k\Lambda j_{0,k}\right)
 {\rm d}x\\
 &
  +
   \int_{\mathbb{R}^3}
  \Big(
  d_k \Lambda S^1_k
  +
  \Lambda \rho_k D_k
  \Big)
 {\rm d}x
 +
 \nu
 \int_{\mathbb{R}^3}
  \nabla \rho_k \cdot \nabla S^1_k
 {\rm d}x.
\end{split}
\end{equation}
Summing up \eqref{3.7} and $\beta_1\times\eqref{3.8}$ with a fixed constant $\beta_1$, one gets
\begin{equation}\label{3.9}
\begin{split}
 &
  \frac{1}{2}
  \frac{\rm d}{{\rm d} t}
  \mathcal{L}_{h,k}(t)
  +
 \beta_1 \|\Lambda \rho_k\|_{L^2}^2
 +
 (\nu-\beta_1) \|\Lambda d_k\|_{L^2}^2 \\[2mm]
 &
 +
 \kappa \|\Lambda \theta_k\|_{L^2}^2
 +
 \gamma \| \theta_k\|_{L^2}^2
 +
 a\|\Lambda j_{0,k}\|_{L^2}^2
 +
 \mathcal{C} b\|j_{0,k}\|_{L^2}^2\\[2mm]
 =
  &
  (b+\mathcal{C}\gamma)\int_{\mathbb{R}^3} j_{0,k} \theta_k{\rm d}x
  +
  \frac{1}{3\mathcal{C}}\int_{\mathbb{R}^3}\Lambda j_{0,k}d_k{\rm d}x
  -
  \beta_1
  \int_{\mathbb{R}^3}
  \left(\Lambda\rho_k\Lambda\theta_k +\frac{1}{3\mathcal{C}}\Lambda\rho_k\Lambda j_{0,k}\right)
 {\rm d}x\\
 &
 +
   \int_{\mathbb{R}^3}
  \Big(
  \rho_k S^1_k
  +
  d_k D_k
  +
  \frac{3}{2}
  \theta_k S^3_k
  +
  j_{0,k} S^4_k
  +
  \beta_1
  d_k \Lambda S^1_k
  +
  \beta_1
  \Lambda \rho_k D_k
  \Big)
  {\rm d}x\\
 &
   +
   \beta_1
   \nu
 \int_{\mathbb{R}^3}
  \nabla \rho_k \cdot \nabla S^1_k
 {\rm d}x,
\end{split}
\end{equation}
with the functional $\mathcal{L}_{h,k}(t)$ being defined as follows.
\begin{equation}\label{3.10}
\mathcal{L}_{h,k}(t)
 :=
  \int_{\mathbb{R}^3}
  \Big(
   \rho_k ^2+\nu \beta_1 |\nabla \rho_k|^2 - \beta_1\Lambda \rho_k d_k+ d_k^2+\frac{3}{2} \theta_k^2+ j_{0,k}^2
  \Big)
  {\rm d}x.
\end{equation}
Inserting the following inequalities
\begin{equation*}
\begin{split}
 (b+\mathcal{C}\gamma)\int_{\mathbb{R}^3}j_{0,k} \theta_k{\rm d}x
  \leq
  &
   \frac{\gamma}{4} \| \theta_k\|_{L^2}^2 + \frac{(b+\mathcal{C}\gamma)^2}{\gamma}\| j_{0,k}\|_{L^2}^2, \\
 \frac{1}{3\mathcal{C}}\int_{\mathbb{R}^3}\Lambda j_{0,k} d_k{\rm d}x
  \leq
  &
   \frac{a}{4} \|\Lambda j_{0,k}\|_{L^2}^2 + \frac{1}{9a\mathcal{C}^2} \|d_k\|_{L^2}^2, \\
   -\beta_1 \int_{\mathbb{R}^3} \Lambda \rho_k \Lambda \theta_k {\rm d}x
    \leq
    &
     \frac{\beta_1}{4}\|\Lambda \rho_k\|_{L^2}^2
     +
     \beta_1 \|\Lambda \theta_k\|_{L^2}^2, \\
   -\beta_1 \frac{1}{3\mathcal{C}}\int_{\mathbb{R}^3}\Lambda \rho_k \Lambda j_{0,k}{\rm d}x
     \leq
     &
      \frac{\beta_1}{4}\|\Lambda \rho_k\|_{L^2}^2
      +
      \frac{\beta_1}{9\mathcal{C}^2}
       \|\Lambda j_{0,k}\|_{L^2}^2,
\end{split}
\end{equation*}
into \eqref{3.9} leads to
\begin{equation}\label{3.11}
\begin{split}
 &
  \frac{1}{2}
  \frac{\rm d}{{\rm d} t}
  \mathcal{L}_{h,k}(t)
  +
 \frac{\beta_1}{2} \|\Lambda \rho_k\|_{L^2}^2
 +
  (\nu-\beta_1) \|\nabla d_k\|_{L^2}^2
  -
  \frac{1}{9a\mathcal{C}^2}\| d_k\|_{L^2}^2
 +
 (\kappa-\beta_1) \|\Lambda \theta_k\|_{L^2}^2\\
 &
 +
 \frac{3}{4}\gamma \| \theta_k\|_{L^2}^2
 +
 (\frac{3a}{4}-\frac{\beta_1}{9\mathcal{C}^2})\|\Lambda j_{0,k}\|_{L^2}^2
 -
 \frac{(b+\mathcal{C}\gamma)^2}{\gamma}\|j_{0,k}\|_{L^2}^2
 +
 \mathcal{C} b\|j_{0,k}\|_{L^2}^2\\
 \leq
  &
      \int_{\mathbb{R}^3}
  \Big(
  \rho_k S^1_k
  +
  d_k D_k
  +
  \frac{3}{2}
  \theta_k S^3_k
  +
  j_{0,k} S^4_k
  +
  \beta_1
  d_k \Lambda S^1_k
  +
  \beta_1
  \Lambda \rho_k D_k
  \Big)
  {\rm d}x\\
  &
   +
   \beta_1
   \nu
 \int_{\mathbb{R}^3}
  \nabla \rho_k \cdot \nabla S^1_k
 {\rm d}x.
\end{split}
\end{equation}
Choosing the positive constant $\beta_1$ to satisfy
\begin{equation}\label{3.12}
\beta_1
 \leq
  \min\{\frac{\kappa}{2},\frac{\nu}{4},\frac{9a\mathcal{C}^2}{4},1\},
\end{equation}
and a positive integer $k_1$ such that
\begin{equation}\label{3.13}
2^{2k_1-3}
 >
 \max\{\frac{1}{9a\nu\mathcal{C}^2},\frac{2(b+\mathcal{C}\gamma)^2}{a\gamma}\}.
\end{equation}
Then, we arrive at the following inequalities, for $k>k_1>0$,
\begin{equation*}
\nu \|\Lambda d_k\|_{L^2}^2 -\frac{1}{9a\mathcal{C}^2}\| d_k\|_{L^2}^2
\geq
\frac{\nu}{2}
\|\Lambda d_k\|_{L^2}^2,
\ \ \ \
 \frac{3a}{4} \|\Lambda j_{0,k}\|_{L^2}^2-\frac{(b+\mathcal{C}\gamma)^2}{\gamma}\|j_{0,k}\|_{L^2}^2
 \geq
  \frac{a}{2} \|\Lambda j_{0,k}\|_{L^2}^2,
\end{equation*}
where we used the Plancherel theorem and the definition of $(\dot\Delta_k)_{k\in\mathbb{Z}}$.
So, it follows that
\begin{equation} \label{3.14}
\begin{split}
 &
  \frac{1}{2}
  \frac{\rm d}{{\rm d} t}
  \mathcal{L}_{h,k}(t)
  +
 \frac{\beta_1}{2} \|\Lambda \rho_k\|_{L^2}^2
 +
 \frac{\nu}{4} \|\Lambda d_k\|_{L^2}^2\\
 &
 +
 \frac{\kappa}{2} \|\Lambda \theta_k\|_{L^2}^2
 +
 \frac{3}{4}\gamma \| \theta_k\|_{L^2}^2
 +
 \frac{a}{4}\|\Lambda j_{0,k}\|_{L^2}^2
 +
 \mathcal{C} b\|j_{0,k}\|_{L^2}^2\\
 \leq
  &
      \int_{\mathbb{R}^3}
  \Big(
  \rho_k S^1_k
  +
  d_k D_k
  +
  \frac{3}{2}
  \theta_k S^3_k
  +
  j_{0,k} S^4_k
  +
  \beta_1
  d_k \Lambda S^1_k
  +
  \beta_1
  \Lambda \rho_k D_k
  -
  \beta_1
  \nu
  \nabla \rho_k \cdot \nabla S^{12}_k
  \Big)
  {\rm d}x\\
 &
   -
 \beta_1
 \nu
 \int_{\mathbb{R}^3}
  \nabla \rho_k \cdot \nabla S^{11}_k
 {\rm d}x,
\end{split}
\end{equation}
where we split $-S^1$ into two parts, that is, $-S^1= S^{11}+ S^{12}$ with $S^{11}= u \cdot \nabla \rho$ and $S^{12}=\rho{\rm div}u$.
Now we estimate the terms on the r.h.s. of \eqref{3.14}.
For the estimates of the first term on the r.h.s. of \eqref{3.14}, by Cauchy-Schwartz inequality, one gets
\begin{eqnarray}\label{3.15}
\begin{split}
&
\int_{\mathbb{R}^3}
  \Big(
  \rho_k S^1_k
  +
  d_k D_k
  +
  \frac{3}{2}
  \theta_k S^3_k
  +
  j_{0,k} S^4_k
  +
  \beta_1
  d_k \Lambda S^1_k
  +
  \beta_1
  \Lambda \rho_k D_k
  -
  \beta_1
  \nu
  \nabla \rho_k \cdot \nabla S^{12}_k
  \Big)
  {\rm d}x\\
 \leq
  &
  \frac{\beta_1}{8}
  \|\Lambda \rho_k\|_{L^2}^2
  +
  \frac{\nu}{8}
  \|\Lambda d_k\|_{L^2}^2
  +
  \frac{\kappa}{8}
  \|\Lambda \theta_k\|_{L^2}^2
  +
  \frac{a}{8}
  \|\Lambda j_{0,k}\|_{L^2}^2
  +
  C \|(S^1_k,D_k,S^3_k,S^4_k,\nabla S^{12}_k)\|_{L^2}^2,
\end{split}
\end{eqnarray}
where the following facts are used. For any integer $k > k_1>0$, it has
\begin{equation*}
\|\rho_k\|_{L^2}\leq 2^{k_1-1}\|\rho_k\|_{L^2}\leq \|\Lambda \rho_k\|_{L^2}\ \ \ \ {\rm and}\ \ \ \
\|\nabla \rho_k\|_{L^2}\sim \|\Lambda \rho_k\|_{L^2},
\end{equation*}
and so on. For the last term on the r.h.s. of \eqref{3.14}, one has
\begin{equation}\label{3.16}
\begin{split}
 -
 \beta_1
 \nu
 \int_{\mathbb{R}^3}
  \nabla \rho_k \cdot \nabla S^{11}_k
 {\rm d}x
 \leq
  &
   \beta_1
   \nu
   \big|\big(((\nabla u)^T\cdot\nabla \rho )_k|\nabla \rho _k\big)\big|
   +
   \beta_1
   \nu
   \big|\big( (u\cdot\nabla)\nabla\rho )_k|\nabla \rho _k\big)\big|\\
 \leq
  &
   \beta_1
   \nu
   \big|\big(((\nabla u)^T\cdot\nabla \rho )_k|\nabla \rho _k\big)\big|
   +
   \beta_1
   \nu
   \big|\big([ \dot\triangle_k,u\cdot \nabla]\nabla\rho |\nabla \rho _k\big)\big|\\
  &
   +
   \beta_1
   \nu
   \big|\big( (u\cdot\nabla)\nabla\rho _k|\nabla \rho _k\big)\big|\\
  \leq
   &
    \frac{\beta_1}{16} \|\nabla \rho _k \|_{L^2}^2
     +
      C \|((\nabla u)^T\cdot\nabla \rho)_k\|_{L^2}^2\\
   &
      +
       C  \big|\big([ \dot\triangle_k,u\cdot \nabla]\nabla\rho |\nabla \rho _k\big)\big|
      +
       C \|{\rm div}  u\|_{L^{\infty}}\|\nabla\rho _k\|_{L^2}^2.
\end{split}
\end{equation}
By using Lemma \ref{Lemma 5.3} and the Young inequality, we get
\begin{equation}\label{3.17}
\begin{split}
 \big|\big([ \dot\triangle_k,u\cdot \nabla]\nabla\rho |\nabla \rho _k\big)\big|
\lesssim
&
 \|\nabla u\|_{L^\infty}
 \|\nabla \rho_k\|_{L^2}^2
 +
 \|\nabla^2 \rho\|_{L^\infty}
 \|\dot\Delta_k u\|_{L^2}
 \|\nabla \rho_k\|_{L^2}\\
&
+
\|\nabla u\|_{L^\infty}
 \|\nabla \rho_k\|_{L^2}
 \sum\limits_{l\geq k-1}
 2^{k-l}
 \|\nabla \rho_l\|_{L^2}\\
\leq
&
\frac{\beta_1}{16}
\|\nabla \rho_k\|_{L^2}^2
+
C
\|\nabla u\|_{L^\infty}
\|\nabla \rho_k\|_{L^2}^2
+
C
\|\nabla ^2\rho\|_{L^\infty}^2
\|u_k\|_{L^2}^2\\
&
+
C
\|\nabla u\|_{L^\infty}^2
\left(
 \sum\limits_{l\geq k-1}
 2^{k-l}
 \|\nabla \rho_l\|_{L^2}\right)^2.
\end{split}
\end{equation}
Putting \eqref{3.15}, \eqref{3.16} and \eqref{3.17} into \eqref{3.14} yields
\begin{equation}\label{3.18}
\begin{split}
   &
   \frac{1}{2}
   \frac{\rm d}{{\rm d}t}\mathcal{L}_{h,k}(t)
   +
 \frac{\beta_1}{4} \|\Lambda \rho_k\|^2
 +
 \frac{\nu}{8} \|\Lambda d_k\|^2\\
 &
 +
 \frac{\kappa}{4} \|\Lambda \theta_k\|^2
 +
 \frac{3}{4}\gamma \| \theta_k\|^2
 +
 \frac{a}{8}\|\Lambda j_{0,k}\|^2
 +
 \mathcal{C} b\|j_{0,k}\|^2\\[1mm]
\lesssim
   &
   \|(S^1_k,D_k,S^3_k,S^4_k,\nabla S^{12}_k )\|_{L^2}^2
   +
   \|((\nabla u)^T\cdot\nabla \rho)_k\|_{L^2}^2
      +
\|\nabla u\|_{L^\infty}
\|\nabla \rho_k\|_{L^2}^2\\[2mm]
   &
+
\|\nabla ^2\rho\|_{L^\infty}^2
\|u_k\|_{L^2}^2
+
\|\nabla u\|_{L^\infty}^2
\left(
 \sum\limits_{l\geq k-1}
 2^{k-l}
 \|\nabla \rho_l\|_{L^2}\right)^2.
\end{split}
\end{equation}

Below, we show all of the first-order derivative estimates of $d_k,~\theta_k$ and $j_{0,k}$. From $\eqref{3.5}_2$-$\eqref{3.5}_4$, we readily get
\begin{eqnarray}\label{3.19}
\begin{split}
 &
  \frac{1}{2}\frac{\rm d}{{\rm d}t} \|(\Lambda d_k, \frac{3}{2}\Lambda \theta_k, \Lambda j_{0,k})\|_{L^2}^2
   +
    \nu \|\Lambda^2 d_k\|_{L^2}^2\\
 &
   +
   \kappa\|\Lambda^2 \theta_k\|_{L^2}^2
  +
   \gamma\|\Lambda \theta_k\|_{L^2}^2
  +
   a \|\Lambda^2 j_{0,k}\|_{L^2}^2
  +
   \mathcal{C}b\|\Lambda j_{0,k}\|_{L^2}^2 \vspace{5pt}\\
=
 &
 (b+\mathcal{C}\gamma)
 (\Lambda j_{0,k}|\Lambda \theta_k)
 +
  \frac{1}{3\mathcal{C}}\big( \Lambda^2 j_{0,k}| \Lambda d_k \big)
  -
  (\triangle \rho_k| \Lambda d_k)\\
 &
  +
  (\Lambda D_k| \Lambda d_k)
  +
  \frac{3}{2}(\Lambda S^3_k|\Lambda \theta_k)
  +
  (\Lambda S^4_k|\Lambda j_{0,k}).
\end{split}
\end{eqnarray}
The Cauchy-Schwarz inequality gives the following inequalities
\begin{equation*}
\begin{split}
 (b+\mathcal{C}\gamma)
 |(\Lambda j_{0,k}|\Lambda \theta_k)|
&
\leq
\frac{(b+\mathcal{C}\gamma)^2}{\gamma}
\|\Lambda j_{0,k}\|_{L^2}^2
+
\frac{\gamma}{4}
\|\Lambda \theta_k\|_{L^2}^2\\
\frac{1}{3\mathcal{C}}
|\big( \Lambda^2 j_{0,k}| \Lambda d_k \big)|
&
 \leq
 \frac{1}{9\nu\mathcal{C}^2}
 \|\Lambda j_{0,k}\|_{L^2}^2
 +
 \frac{\nu}{4}\|\Lambda^2 d_{k}\|_{L^2}^2 ,\vspace{6pt}\\
|(\triangle \rho_k| \Lambda d_k)|
&
 \leq \frac{1}{\nu} \|\Lambda \rho_k\|_{L^2} +\frac{\nu}{4}\|\Lambda^2 d_{k}\|_{L^2}^2 \vspace{6pt}\\
|(\Lambda D_k| \Lambda d_k)|
&
 \leq \frac{1}{\nu}\|D_k\|_{L^2}^2+\frac{\nu}{4}\|\Lambda^2 d_{k}\|_{L^2}^2,\\
 \frac{3}{2}
 |(\Lambda S^3_k|\Lambda \theta_k)|
&
 \leq \frac{9}{4\kappa} \|S^3_k\|_{L^2}^2+ \frac{\kappa}{4} \|\Lambda^2 \theta_k\|_{L^2}^2\\
 |(\Lambda S^4_k|\Lambda j_{0,k})|
&
 \leq \frac{1}{2 a} \|S^4_k\|_{L^2}^2+ \frac{a}{2} \|\Lambda^2 j_{0,k}\|_{L^2}^2.
\end{split}
\end{equation*}
Inserting the estimates above into \eqref{3.19} yields that
\begin{eqnarray}\label{3.20}
\begin{split}
&
\frac{1}{2}
\frac{\rm d}{{\rm d}t}\|(\Lambda d_k,\frac{3}{2}\Lambda\theta_k,\Lambda j_{0,k})\|_{L^2}^2
  +
  \frac{\nu}{4} \|\Lambda^2 d_k\|_{L^2}^2\\
&
  +
  \frac{\kappa}{2}
  \|\Lambda^2 \theta_k\|_{L^2}^2
  +
  \gamma
  \|\Lambda \theta_k\|_{L^2}^2
  +
  \frac{a}{2}
  \|\Lambda^2 j_{0,k}\|_{L^2}^2
  +
   \mathcal{C}b
   \|\Lambda j_{0,k}\|_{L^2}^2\\
\leq
 &
 \frac{1}{\nu}
 \|\Lambda \rho_k\|_{L^2}^2
 +
 \Big(
 \frac{(b+\mathcal{C}\gamma)^2}{\gamma}
 +
 \frac{1}{9\nu\mathcal{C}^2}
 \Big)
 \|\Lambda j_{0,k}\|_{L^2}^2
 +
 \frac{1}{\nu}
 \|D_k\|_{L^2}^2
 +
 \frac{9}{4\kappa}
 \|S^3_k\|_{L^2}^2
 +
 \frac{1}{2a}
 \|S^4_k\|_{L^2}^2.
\end{split}
\end{eqnarray}
For the estimate of $(\mathcal{P}u)_k$, it follows from \eqref{3.6} and the Young inequality that for any integer $k>k_1$,
\begin{equation}\label{3.21}
\begin{split}
&
\frac{1}{2}
\frac{\rm d}{{\rm d}t}
\|\big((\mathcal{P}u)_k,\Lambda (\mathcal{P}u)_k\big)\|_{L^2}^2
    +
    \frac{\mu}{2}
    \|\Lambda^2 (\mathcal{P} u)_k\|_{L^2}^2
 +
    \mu\|\Lambda (\mathcal{P} u)_k\|_{L^2}^2
    -
    \frac{\mu}{2}
    \|(\mathcal{P} u)_k\|_{L^2}^2
\leq
   \frac{1}{\mu}
   \|(\mathcal{P} S^2)_k\|_{L^2}^2.
\end{split}
\end{equation}
Since $k>k_1>0$, we have
\begin{equation*}
    \|(\mathcal{P} u)_k\|_{L^2}^2
\leq
 \|\Lambda (\mathcal{P} u)_k\|_{L^2}^2,
\end{equation*}
which together with \eqref{3.21} imply
\begin{equation}\label{3.22}
\begin{split}
&
\frac{1}{2}
\frac{\rm d}{{\rm d}t}
\|\big((\mathcal{P}u)_k,\Lambda (\mathcal{P}u)_k\big)\|_{L^2}^2
 +
    \frac{\mu}{2}\|\big(\Lambda (\mathcal{P} u)_k,\Lambda^2 (\mathcal{P} u)_k\big)\|_{L^2}^2
\leq
   \frac{1}{\mu}
   \|(\mathcal{P} S^2)_k\|_{L^2}^2.
\end{split}
\end{equation}
Then, adding $\beta_2\times \eqref{3.20}$ and \eqref{3.22} to \eqref{3.18} with $\beta_2>0$ being a suitably small constant, we achieve the estimates of the high frequency part for $k>k_1$ as follows.
 \begin{equation}\label{3.23}
\begin{split}
 &
 \frac{1}{2}
 \frac{\rm d}{{\rm d}t}
 \left\{
\mathcal{L}_{h,k}(t)
+
\beta_2
\|\big(\Lambda d_k,\frac{3}{2}\Lambda \theta_k,\Lambda j_{0,k}\big)(t)\|_{L^2}^2
+
\|\big((\mathcal{P} u)_k,\Lambda (\mathcal{P} u)_k\big)(t)\|_{L^2}^2
 \right\}\\
 &
 +
 \Big(\frac{\beta_1}{4}-\frac{\beta_2}{\nu}\Big)
  \|\Lambda \rho_k(t)\|_{L^2}^2
 +
 \frac{\nu}{8} \|\Lambda d_k(t)\|_{L^2}^2
 +
  \frac{\nu}{4}\beta_2 \|\Lambda^2 d_k(t)\|_{L^2}^2\\[1mm]
 &
 +
 \frac{3}{4}\gamma \| \theta_k(t)\|_{L^2}^2
 +
 \Big(\frac{\kappa}{4}+\beta_2\gamma\Big)
 \|\Lambda \theta_k(t)\|_{L^2}^2
 +
  \frac{\kappa}{2}\beta_2
  \|\Lambda^2 \theta_k(t)\|_{L^2}^2\\
 &
 +
 \mathcal{C} b\|j_{0,k}(t)\|_{L^2}^2
 +
 \Big(
 \frac{a}{8}
 +
 \beta_2\mathcal{C}b
 -
 \frac{\beta_2(b+\mathcal{C}\gamma)^2}{\gamma}
 -
 \frac{\beta_2}{9\nu\mathcal{C}^2}
 \Big)
 \|\Lambda j_{0,k}(t)\|_{L^2}^2\\[1mm]
 &
 +
  \frac{a}{2}\beta_2
  \|\Lambda^2 j_{0,k}(t)\|_{L^2}^2
 +
\frac{\mu}{2}
\|\big(\Lambda (\mathcal{P} u)_k,\Lambda^2 (\mathcal{P} u)_k\big)(t)\|_{L^2}^2\\[1mm]
 \lesssim
 &
 \|( S^1_k,S^2_k,S^3_k,S^4_k,\nabla S^{12}_k)(t)\|_{L^2}^2 {\rm d}\tau
 +
 \|((\nabla u)^T \cdot\nabla\rho)_k(t)\|_{L^2}^2
      +
\|\nabla u\|_{L^\infty}
\|\nabla \rho_k\|_{L^2}^2\\[2mm]
   &
+
\|\nabla ^2\rho\|_{L^\infty}^2
\|u_k\|_{L^2}^2
+
\|\nabla u\|_{L^\infty}^2
\left(
 \sum\limits_{l\geq k-1}
 2^{k-l}
 \|\nabla \rho_l\|_{L^2}\right)^2,
\end{split}
\end{equation}
where we have used the facts that
\begin{equation}\label{3.24}
\|D_k\|_{L^2}
\leq
\|S^2_k\|_{L^2}\ \ \ \ {\rm and}\ \ \ \
\|(\mathcal{P}S^2)_k\|_{L^2}
\leq
\|S^2_k\|_{L^2}.
\end{equation}

Setting
\begin{equation*}
\mathcal{H}_{h,k}(t)
 =
\mathcal{L}_{h,k}(t)
+
\beta_2
\|\big(\Lambda d_k,\frac{3}{2}\Lambda \theta_k,\Lambda j_{0,k}\big)(t)\|_{L^2}^2
+
\|\big((\mathcal{P} u)_k,\Lambda (\mathcal{P} u)_k\big)(t)\|_{L^2}^2,
\end{equation*}
then there exists a positive constant $C_2$ such that
\begin{equation}\label{3.25}
C_2^{-1}
\mathcal{H}_{h,k}(t)
\leq
 \|(\rho_k, u_k, \theta_k, j_{0,k})(t)\|_{L^2}^2
 +
 2^{2k}\|(\rho_k, u_k, \theta_k, j_{0,k})(t)\|_{L^2}^2
\leq
C_2
\mathcal{H}_{h,k}(t),
\end{equation}
where we have used the identity $u_k=-\Lambda^{-1}\nabla d_k-\Lambda^{-1}{\rm div}(\mathcal P u)_k$ and the Bernstein inequality.
From \eqref{3.23}, by letting the constant $\beta_2$ satisfy
\begin{equation}
0<\beta_2\leq
\min\left\{
\frac{\beta_1\nu}{8},
\frac{a}{16}
\Big|
 \frac{(b+\mathcal{C}\gamma)^2}{\gamma}
 +
 \frac{1}{9\nu\mathcal{C}^2}
 -
 \mathcal{C}b
\Big|^{-1}
\right\},
\end{equation}
there is a positive constant $C_3$ depending on $\beta_1,\beta_2,\kappa,\nu,\gamma,a,\mathcal{C}$ and $b$, such that for any $k>k_1$
\begin{equation}\label{3.27}
\begin{split}
   &
   \frac{\rm d}{{\rm d}t}
   \mathcal{H}_{h,k}(t)
 +
 C_3
 \Big\{
 \|(\theta_k,j_{0,k})(t)\|_{L^2}^2
 +
 2^{2k}\|(\rho_k, u_k,\theta_k,j_{0,k})(t)\|_{L^2}^2
 +
 2^{4k}\|(u_k,\theta_k,j_{0,k})(t)\|_{L^2}^2
 \Big\}\\
\lesssim
   &
   \|( S^1_k,S^2_k,S^3_k,S^4_k,\nabla S^{12}_k)(t)\|_{L^2}^2
   +
   \|((\nabla u)^T \cdot\nabla\rho )_k(t)\|_{L^2}^2
      +
\|\nabla u\|_{L^\infty}
\|\nabla \rho_k\|_{L^2}^2\\[2mm]
   &
+
\|\nabla ^2\rho\|_{L^\infty}^2
\|u_k\|_{L^2}^2
+
\|\nabla u\|_{L^\infty}^2
\left(
 \sum\limits_{l\geq k-1}
 2^{k-l}
 \|\nabla \rho_l\|_{L^2}\right)^2.
\end{split}
\end{equation}
Integrating \eqref{3.27} with respect to $t$ over $[0,t]$ and using \eqref{3.25} yield for any $k>k_1$
\begin{equation}\label{3.28}
\begin{split}
 &
 \|(\rho_k, u_k, \theta_k, j_{0,k})(t)\|_{L^2}^2
 +
 2^{2k}\|(\rho_k, u_k, \theta_k, j_{0,k})(t)\|_{L^2}^2\\
 &
 +
 \int_0^t
 \left\{
 \| (\theta_k,j_{0,k})(\tau)\|^2_{L^2}
 +
 2^{2k}\|(\rho_k, u_k, \theta_k, j_{0,k})(\tau)\|^2_{L^2}
 +
 2^{4k}\|(u_k,\theta_k,j_{0,k})(\tau)\|^2_{L^2}
 \right\}{\rm d}\tau \\
 \lesssim
 &
 \|(\rho_k,u_k,\theta_k,j_{0,k})(0)\|_{L^2}^2
 +
 2^{2k}\|(\rho_k, u_k, \theta_k, j_{0,k})(0)\|_{L^2}^2\\
 &
 +
 \int_0^t
 \big(
 \|( S^1_k,S^2_k,S^3_k,S^4_k,\nabla S^{12}_k)(\tau)\|_{L^2}^2
 +
 \|((\nabla u)^T \cdot\nabla\rho)_k(\tau)\|_{L^2}^2
 \big)
 {\rm d}\tau\\
 &
 +
 \int_0^t
\|\nabla u\|_{L^\infty}
\|\nabla \rho_k\|_{L^2}^2
+
\|\nabla ^2\rho\|_{L^\infty}^2
\|u_k\|_{L^2}^2
+
\|\nabla u\|_{L^\infty}^2
\left(
 \sum\limits_{l\geq k-1}
 2^{k-l}
 \|\nabla \rho_l\|_{L^2}\right)^2
 {\rm d}\tau.
\end{split}
\end{equation}

By the weighted $l^2$ summation over $k$ with $k\geq k_1+1$ in \eqref{3.28} and the definition in \eqref{5.1}, we have the following proposition for short wave parts.
\begin{proposition}\label{Proposition 3.3}
Let $s\geq 1$ be a real number. The following inequality holds true for the solution to \eqref{3.1}
\begin{equation}\label{3.29}
\begin{split}
 &
 \|(\rho,  u , \theta, j_0)^S(t)\|_{\dot B_{2,2}^{s-1}}^2
 +
  \|(\rho,  u , \theta, j_0)^S(t)\|_{\dot B_{2,2}^{s}}^2\\
 &
 +
  \int_0^t
  \Big(
 \|(\rho,u,\theta,j_0)^S(\tau) \|_{\dot B_{2,2}^{s}}^2
 +
  \|(u,\theta,j_0) ^S(\tau)\|^2_{\dot B_{2,2}^{s+1}}
 +
 \| (\theta, j_0)^S(\tau) \|_{\dot B_{2,2}^{s-1}}^2
 \Big)
 {\rm d}\tau\\
\lesssim
 &
  \|(\rho, u ,  \theta, j_0)^S(0)\|_{\dot B_{2,2}^{s-1}}^2
  +
  \|(\rho, u ,  \theta, j_0)^S(0)\|_{\dot B_{2,2}^{s}}^2\\
 &
  +
   \int_0^t
   \Big(
   \| (S^{12})^S(\tau)\|_{\dot B^{s}_{2,2}}^2+\|\big(S^1,S^2,S^3,S^4,(\nabla u)^T \cdot\nabla\rho\big)^S(\tau)\|_{\dot B^{s-1}_{2,2}}^2
   \Big)
   {\rm d}\tau\\
 &
 +
 \int_0^t
  \Big(
  \|\nabla^2\rho\|_{L^\infty}^2
  \|u^S\|_{\dot B^{s-1}_{2,2}}^2
  +
  \|\nabla u (\tau)\|_{L^{\infty}}\|\nabla\rho^S(\tau)\|_{\dot B^{s-1}_{2,2}}^2
 \Big) {\rm d}\tau\\
 &
 +
 \int_0^t
  \|\nabla u\|_{L^\infty}^2
  \sum\limits_{k>k_1}2^{2k(s-1)}\left(
 \sum\limits_{l\geq k-1}
 2^{k-l}
 \|\nabla \rho_l\|_{L^2}\right)^2
  {\rm d}\tau.
\end{split}
\end{equation}
\end{proposition}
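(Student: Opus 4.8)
The plan is to obtain \eqref{3.29} purely from a weighted $\ell^2$ summation of the block estimate \eqref{3.28}, which already holds for every integer $k>k_1$ with all the analytic work (the block energy identities, the high-frequency lower bounds, the commutator estimate of Lemma \ref{Lemma 5.3}) incorporated. Recalling from the definition \eqref{5.1} of the homogeneous Besov norms that, for the high-frequency part $f^S:=\sum_{k>k_1}\dot\triangle_kf$, one has $\|f^S\|_{\dot B^{\sigma}_{2,2}}^2=\sum_{k>k_1}2^{2k\sigma}\|f_k\|_{L^2}^2$ for any real $\sigma$, I would multiply both sides of \eqref{3.28} by the weight $2^{2k(s-1)}$ and then sum the resulting family of inequalities over all integers $k\geq k_1+1$.

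Since every term appearing in \eqref{3.28} is nonnegative, the summation $\sum_{k\geq k_1+1}$ commutes with the time integral $\int_0^t$ by the monotone convergence theorem, so the only real work is to track how each term transforms under the weight. On the left-hand side: $\sum_k 2^{2k(s-1)}\|(\rho_k,u_k,\theta_k,j_{0,k})\|_{L^2}^2$ reassembles into $\|(\rho,u,\theta,j_0)^S\|_{\dot B^{s-1}_{2,2}}^2$; the prefactor $2^{2k}$ in the second group shifts the weight to $2^{2k(s-1)}\cdot 2^{2k}=2^{2ks}$ and produces $\|(\rho,u,\theta,j_0)^S\|_{\dot B^{s}_{2,2}}^2$; inside the dissipation integral the prefactor $2^{4k}$ shifts it to $2^{2k(s+1)}$ and produces $\|(u,\theta,j_0)^S\|_{\dot B^{s+1}_{2,2}}^2$, while the remaining $\|(\theta_k,j_{0,k})\|_{L^2}^2$ and $2^{2k}\|(\rho_k,u_k,\theta_k,j_{0,k})\|_{L^2}^2$ contributions give the $\dot B^{s-1}_{2,2}$-norm of $(\theta,j_0)^S$ and the $\dot B^{s}_{2,2}$-norm of $(\rho,u,\theta,j_0)^S$ under the time integral. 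Using the Bernstein inequality $\|\nabla\rho_k\|_{L^2}\sim 2^k\|\rho_k\|_{L^2}$ on the high-frequency blocks, these can be displayed interchangeably in terms of $\rho^S$ or of $\nabla\rho^S$, matching the form of the statement.

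On the right-hand side I would proceed identically: the initial-data blocks sum to $\|(\rho,u,\theta,j_0)^S(0)\|_{\dot B^{s-1}_{2,2}}^2+\|(\rho,u,\theta,j_0)^S(0)\|_{\dot B^{s}_{2,2}}^2$; the source blocks $S^1_k,\dots,S^4_k$ and $((\nabla u)^T\!\cdot\nabla\rho)_k$ sum to the $\dot B^{s-1}_{2,2}$-norm of $(S^1,S^2,S^3,S^4,(\nabla u)^T\!\cdot\nabla\rho)^S$, whereas $\|\nabla S^{12}_k\|_{L^2}\sim 2^k\|S^{12}_k\|_{L^2}$ pushes that contribution up to $\|(S^{12})^S\|_{\dot B^{s}_{2,2}}^2$. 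In the last three terms the $L^\infty$-norms $\|\nabla u\|_{L^\infty}$, $\|\nabla u\|_{L^\infty}^2$, $\|\nabla^2\rho\|_{L^\infty}^2$ are independent of $k$ and pull out of the $k$-sum, leaving $\|\nabla\rho^S\|_{\dot B^{s-1}_{2,2}}^2$, the double series $\sum_{k>k_1}2^{2k(s-1)}\big(\sum_{l\geq k-1}2^{k-l}\|\nabla\rho_l\|_{L^2}\big)^2$ (which I would simply carry along verbatim, to be absorbed later via a discrete Young convolution inequality when it is combined with the full-norm estimate), and $\|u^S\|_{\dot B^{s-1}_{2,2}}^2$. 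Collecting these contributions reproduces exactly \eqref{3.29}.

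I do not expect a genuine obstacle here: the argument is essentially bookkeeping of Littlewood--Paley weights. The only points needing a little care are checking that the successive weight shifts $2^{2k(s-1)}\mapsto 2^{2ks}\mapsto 2^{2k(s+1)}$ send each dissipative term to precisely the regularity index claimed, and observing that, the summation being restricted to the high frequencies $k>k_1$ (with $s\geq1$), the interchange of sum and integral and the reindexing of weights are all legitimate and every homogeneous Besov norm in \eqref{3.29} is finite in the class $X(0,T)$.
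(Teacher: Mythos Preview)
Your proposal is correct and matches the paper's own argument exactly: the paper obtains Proposition~\ref{Proposition 3.3} simply by performing the weighted $\ell^2$ summation of \eqref{3.28} over $k\geq k_1+1$ and invoking the definition \eqref{5.1}, with no additional analytic input. Your write-up is in fact more explicit than the paper's, which states the summation in one sentence without tracking the individual weight shifts.
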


\subsection{Estimates in the low-frequency regimes}

\subsubsection{Estimates on the compressible part}

Applying the Fourier transform on the linearized system \eqref{3.3} gives that
\begin{eqnarray}\label{3.30}
\left\{ \begin{array}{llllll}
 \partial_t \hat{\rho} + |\xi| \hat{d} =\widehat{S^1},\\[2mm]
 \partial_t \hat{ d } - |\xi| \hat{\rho} + \nu |\xi|^2 \hat{d} -|\xi| \hat{\theta}  -\frac{1}{3\mathcal{C}} |\xi| \hat{j_0}=\widehat{D},\\[2mm]
\partial_t \hat{\theta} + \frac{2}{3} |\xi| \hat{d} + \frac23 \kappa |\xi|^2 \hat{\theta}  + \frac{2}{3} \gamma \hat{\theta}- \frac{2}{3} b \hat{j_0}=\widehat{S^3},\\[2mm]
 \partial_t \hat{j_0 } - \mathcal{C}\gamma \hat{\theta} + a |\xi|^2 \hat{j_0} + \mathcal{C} b \hat{j_0}=\widehat{S^4},
\end{array}
\right.
\end{eqnarray}
In other words, it is
\begin{eqnarray}\label{3.31}
 \frac{\rm d}{{\rm d}t}\left(\begin{array}{lll} \hat{\rho}\\ \hat{ d }\\ \hat{\theta}\\ \hat{j_0 }\end{array}\right)+ A(\xi)\left(\begin{array}{lll} \hat{\rho}\\ \hat{ d }\\ \hat{\theta}\\ \hat{j_0 }\end{array}\right)
 =\left(\begin{array}{lll} \widehat{S^1}\\ \widehat{D}\\ \widehat{S^3}\\ \widehat{S^4}\end{array}\right),
 \end{eqnarray}
with
\begin{eqnarray}\label{3.311}
 A(\xi)= \left(\begin{array}{ccccc} 0 & |\xi|& 0& 0 \\ -|\xi|& \nu|\xi|^2&-|\xi|&-\frac{1}{3\mathcal{C}}|\xi| \\0& \frac{2|\xi|}{3}& \frac{2}{3}\kappa|\xi|^2+\frac{2\gamma}{3}& -\frac{2}{3}b \\ 0& 0& -\mathcal{C}\gamma & a|\xi|^2 + \mathcal{C}b \end{array}\right).
\end{eqnarray}

Denote $ \varrho = |\xi|$, the characteristic polynomial of the matrix $A$ takes the following form.
\begin{equation}
\begin{split}
 P(\lambda)
 =& \left|A(\varrho)-\lambda I\right| \notag \\
 :=& a_0\lambda^4- a_1 \lambda^3 +a_2 \lambda^2 -a_3 \lambda +a_4, \label{eigenF1}
\end{split}
\end{equation}
where
\begin{equation*}
\begin{split}
 a_0=&1,\ \ \ \
 a_1
 =(a+\frac23 \kappa+\nu) \varrho^2 + \frac{2\gamma}{3}+\mathcal{C}b,\\
 a_2
 =& \left[(\nu a +\frac23 \nu \kappa+ \frac23 a \kappa)\varrho^2+\frac{2\gamma \nu}{3}+\mathcal{C} b\nu +\frac{2\gamma a}{3}+\frac{2 \kappa \mathcal{C} b}{3}+\frac{5}{3}\right]\varrho^2,\\
 a_3
 =& \left[\frac23 a \kappa \nu \varrho^4 +(\frac{2(\gamma a + \kappa \mathcal{C} b)\nu}{3} +\frac{5a}{3}+\frac{2\kappa}{ 3 })\varrho^2 +\frac{5}{3}\mathcal{C} b+\frac{8\gamma}{9}\right]\varrho^2,\\
 a_4
 =& \frac23 \left(a \kappa \varrho^2+a\gamma +\kappa \mathcal{C} b\right)\varrho^4.\\
\end{split}
\end{equation*}
It is direct to check that the matrix $A(0)$ has four eigenvalues, i.e. $0$ (three multiply) and $\frac{2\gamma}{3}+ \mathcal Cb$.
By a direct calculation, it is clear that for small positive $\varrho$, $P(\lambda)$ has a pair of complex conjugated eigenvalues and two real eigenvalues with asymptotic
expansion.
\begin{equation*}
\pm i
\rho
\sqrt{\frac{8\gamma+15\mathcal{C}b}{6\gamma+9\mathcal{C}b}}
+
\Big[
\frac{9\gamma}{(6\gamma+9\mathcal{C}b)^2}
+
\frac{2(a\gamma+\kappa\mathcal{C}b)(\gamma+3\mathcal{C}b)}{(8\gamma+15\mathcal{C}b)(2\gamma+3\mathcal{C}b)}
+
\frac{\nu}{2}
\Big]
\varrho^2
+
O(\varrho^3),
\end{equation*}
and
\begin{equation*}
\frac{6 ( a\gamma+\kappa \mathcal{C} b)}{15\mathcal{C} b + 8\gamma}\varrho^2+O(\varrho^3),
\ \ \ \
\frac{2\gamma}{3}+\mathcal{C} b
+
\frac{4\gamma \kappa+9\mathcal{C} b a-\gamma}{6\gamma+9\mathcal{C} b}
\varrho^2
+
O(\varrho^3).
\end{equation*}
According to these eigenvalues, we expect to have three dissipative modes (corresponding to $\lambda_i(\varrho),\ i=1,2,3$) and a damped mode (corresponding to $\lambda_4(\varrho)$). Roughly speaking, we hope that the modes corresponding to $\lambda_i(\varrho)\ (i=1,2,3)$ should have decay estimates of the type ${\rm e}^{-c_0 \varrho^2 t}$ with small enough $\varrho$ and a positive constant $c_0$ depended only on $\gamma,\ \kappa,\ \nu, \mathcal{C},\ a$ and $\ b$. The mode corresponding to $\lambda_4(\varrho)$ should have better decay properties.
In what follows, we use the modes:
\begin{equation}\label{3.33}
 \widehat{\Theta} :=3\mathcal{C} \widehat{\theta}+2\widehat{j_0},~~~~\widehat{\Xi}:=\gamma \widehat{\theta}-b\widehat{j_0}.
\end{equation}
Actually, to some extent, $\hat\rho$, $\hat d$ and $\widehat\Theta$ are ``dissipative modes'', and $\widehat\Xi$ is a ``damped mode''.
Thus the system \eqref{3.30} could be rewritten as
\begin{equation}\label{3.34}
\left\{
\begin{array}{llllll}
 \partial_t \widehat{\rho} + |\xi| \widehat{d}  =\widehat{\mathcal{S}^1}, \\[2mm]
 \partial_t \widehat{ d } - |\xi| \widehat{\rho} + \nu |\xi|^2 \widehat{d} - c_1 |\xi| \widehat{\Theta}  - c_2 |\xi| \widehat{\Xi}=\widehat{\mathcal{S}^2},\\[2mm]
\partial_t \widehat{\Theta} + 2\mathcal{C}|\xi| \widehat{d} + c_3 |\xi|^2 \widehat{\Theta}  -
c_4|\xi|^2\widehat{\Xi} =\widehat{\mathcal{S}^3},\\[2mm]
 \partial_t \widehat{\Xi} +\frac{2\gamma}{3}|\xi|\widehat{d} + c_5 \widehat{\Xi} -c_6 |\xi|^2 \widehat{\Theta} =\widehat{\mathcal{S}^4},
\end{array}
\right.
\end{equation}
where $(\mathcal{S}_1,\mathcal{S}_2,\mathcal{S}_3,\mathcal{S}_4)=(S_1,D,3\mathcal{C} S_3+2 S_4,\gamma S_3-b S_4)$ and
the coefficients $c_i$ $(i=1,\cdots,6)$ are defined by
\begin{equation*}
\begin{split}
&
c_1= \frac{b+\frac{\gamma}{3\mathcal{C}}}{3b\mathcal{C}+2\gamma} >0,~~~~c_2=\frac{1}{3b\mathcal{C}+2\gamma}>0,\\
&
c_3= \left(\frac{2\kappa b\mathcal{C}}{3b\mathcal{C}+2\gamma}+\frac{2a\gamma}{3b\mathcal{C}+2\gamma}\right)>0,\\
&
c_4= \left(\frac{6a\mathcal{C}}{3b\mathcal{C}+2\gamma}-\frac{4\kappa \mathcal{C}}{3b\mathcal{C}+2\gamma} \right),\\
&
c_5=\left[\frac{(4\kappa \gamma +9ab\mathcal{C})|\xi|^2}{3(3b\mathcal{C}+2\gamma)}+\Big(\frac{2\gamma}{3}+b \mathcal{C}\Big) \right]>0,\\
&
c_6= \frac{2\kappa \gamma b-3ab\gamma}{3(3b\mathcal{C}+2\gamma)}.
\end{split}
\end{equation*}
By the energy method in the Fourier space, we can get
\begin{equation} \label{3.35}
\begin{split}
 &
 \frac{1}{2}
 \frac{\rm d}{{\rm d}t}
 \left\{ |\widehat{\rho}|^2 + |\widehat{d}|^2 +\frac{c_1}{2\mathcal{C}} |\widehat{\Theta}|^2 +\frac{3c_2}{2\gamma} |\widehat{\Xi}|^2 \right\}
 +
 \nu |\xi|^2 |\widehat{d}|^2 +\frac{c_1c_3}{2\mathcal{C}} |\xi|^2|\widehat{\Theta}|^2 +\frac{3c_2c_5}{2\gamma} |\widehat{\Xi}|^2 \\
=
 &
  \left(\frac{c_1c_4}{2\mathcal{C}}+\frac{3c_2c_6}{2\gamma}\right)|\xi|^2 {\rm Re}(\widehat{\Theta}\overline{\widehat{\Xi}})\\
&
 +
 {\rm Re}\left(\widehat{\mathcal{S}^1}\overline{\widehat{\rho}}\right)
 +
 {\rm Re}\left(\widehat{\mathcal{S}^2}\overline{\widehat{d}}\right)
 +
 \frac{c_1}{2\mathcal{C}}
 {\rm Re}\left(\widehat{\mathcal{S}^3}\overline{\widehat{\Theta}}\right)
 +
 \frac{3c_2}{2\gamma}
 {\rm Re}\left(\widehat{\mathcal{S}^4}\overline{\widehat{\Xi}}\right).
\end{split}
\end{equation}
By the similar arguments, it follows from $\eqref{3.34}_1$ and $\eqref{3.34}_2$ that
\begin{equation}\label{3.36}
\begin{split}
\frac{\rm d}{{\rm d}t}
{\rm Re}(\widehat{\rho}\overline{\widehat{d}})
-
|\xi| |\widehat{\rho}|^2
+
|\xi| |\widehat{d}|^2
=
&
 -
 \nu |\xi|^2 {\rm Re}(\widehat{\rho}\overline{\widehat{d}})
 +
 c_1|\xi| {\rm Re}(\widehat{\rho}\overline{\widehat{\Theta}})
 +
 c_2|\xi| {\rm Re}(\widehat{\rho}\overline{\widehat{\Xi}})\\
 &
 +
 {\rm Re}\left(\widehat{\mathcal{S}^1}\overline{\widehat{d}}\right)
 +
 {\rm Re}\left(\widehat{\mathcal{S}^2}\overline{\widehat{\rho}}\right).
\end{split}
\end{equation}
And $\eqref{3.35}-\beta_3|\xi| \times \eqref{3.36}$ gives
\begin{equation} \label{3.37}
\begin{split}
 &
 \frac{1}{2}
 \frac{\rm d}{{\rm d}t}
 \left\{
 |\widehat{\rho}|^2
 -
 2\beta_3 |\xi| {\rm Re}(\widehat{\rho} \overline{\widehat{d}})
 +
 |\widehat{d}|^2
 +
 \frac{c_1}{2\mathcal{C}}
  |\widehat{\Theta}|^2
 +
 \frac{3c_2}{2\gamma}
 |\widehat{\Xi}|^2
 \right\}\\
 &
 +\beta_3 |\xi|^2|\widehat{\rho}|^2
 +
  (\nu-\beta_3) |\xi|^2 |\widehat{d}|^2
  +
  \frac{c_1c_3}{2\mathcal{C}} |\xi|^2|\widehat{\Theta}|^2
  +
  \frac{3c_2c_5}{2\gamma} |\widehat{\Xi}|^2\\
=
 &
 \left(\frac{c_1c_4}{2\mathcal{C}}+\frac{3c_2c_6}{2\gamma}\right)
 |\xi|^2
 {\rm Re}(\widehat{\Theta}\overline{\widehat{\Xi}})
 +
 \beta_3 \nu
 |\xi|^3
 {\rm Re}(\widehat{\rho}\overline{\widehat{d}})
 -
 \beta_3 c_1
 |\xi|^2{\rm Re}(\widehat{\rho}\overline{\widehat{\Theta}})\\
 &
 -
 \beta_3 c_2
 |\xi|^2{\rm Re}(\widehat{\rho}\overline{\widehat{\Xi}})
 +
 {\rm Re}\left(\widehat{\mathcal{S}^1}\overline{\widehat{\rho}}\right)
 +
 {\rm Re}\left(\widehat{\mathcal{S}^2}\overline{\widehat{d}}\right)
 +
 \frac{c_1}{2\mathcal{C}}
 {\rm Re}\left(\widehat{\mathcal{S}^3}\overline{\widehat{\Theta}}\right)\\
 &
 +
 \frac{3c_2}{2\gamma}
 {\rm Re}\left(\widehat{\mathcal{S}^4}\overline{\widehat{\Xi}}\right)
 -
 \beta_3
 {\rm Re}\left(\widehat{\mathcal{S}^1}\overline{\widehat{d}}\right)
 -
 \beta_3
 {\rm Re}\left(\widehat{\mathcal{S}^2}\overline{\widehat{\rho}}\right),
\end{split}
\end{equation}
for some positive constant $\beta_3$.
By the Young inequality, a direct calculation yields
\begin{equation} 
\begin{split}
  {\rm R.H.S.\ terms\ of}\ \eqref{3.37}
 \leq
 &
  \frac{c_1c_3}{4\mathcal{C}}
  |\xi|^2 |\widehat{\Theta}|^2
  +
 \left(\frac{c_1|c_4|}{4\mathcal{C}}+\frac{3c_2|c_6|}{4\gamma}\right)^2
  \frac{\mathcal{C}}{c_1c_3} |\xi|^2 |\widehat{\Xi}|^2\\
 &
 +
 \frac{\beta_3 \nu}{2} |\xi|^3(|\widehat{\rho}|^2+|\widehat{d}|^2)
 +
  \frac{\beta_3}{4} |\xi|^2|\widehat{\rho}|^2+2\beta_3 c_1^2|\xi|^2|\widehat{\Theta}|^2\\
 &
 +
 2\beta_3 c_2^2|\xi|^2|\widehat{\Xi}|^2.
\end{split}
\end{equation}
Then we have
\begin{equation}\label{3.39}
\begin{split}
 &
 \frac{\rm d}{{\rm d}t}
 \mathcal{L}_l(t,\xi)
 +\frac{3\beta_3}{4} |\xi|^2|\widehat{\rho}|^2
 +
  (\nu-\beta_3) |\xi|^2 |\widehat{d}|^2
  +
  \left(\frac{c_1c_3}{4\mathcal{C}}-2\beta_3 c_1^2\right) |\xi|^2|\widehat{\Theta}|^2
  +
  \frac{3c_2c_5}{2\gamma} |\widehat{\Xi}|^2\\
\leq
 &
 \frac{\beta_3 \nu}{2} |\xi|^3(|\widehat{\rho}|^2+|\widehat{d}|^2)
 +
 \left[
 \left(\frac{c_1|c_4|}{4\mathcal{C}}+\frac{3c_2|c_6|}{4\gamma}\right)^2
  \frac{\mathcal{C}}{c_1c_3}
  +
  2\beta_3 c_2^2
 \right]
   |\xi|^2 |\widehat{\Xi}|^2
 +
 {\rm Re}\left(\widehat{\mathcal{S}^1}\overline{\widehat{\rho}}\right)\\
 &
 +
 {\rm Re}\left(\widehat{\mathcal{S}^2}\overline{\widehat{d}}\right)
 +
 \frac{c_1}{2\mathcal{C}}
 {\rm Re}\left(\widehat{\mathcal{S}^3}\overline{\widehat{\Theta}}\right)
 +
 \frac{3c_2}{2\gamma}
 {\rm Re}\left(\widehat{\mathcal{S}^4}\overline{\widehat{\Xi}}\right)
 -
 \beta_3
 {\rm Re}\left(\widehat{\mathcal{S}^1}\overline{\widehat{d}}\right)
 -
 \beta_3
 {\rm Re}\left(\widehat{\mathcal{S}^2}\overline{\widehat{\rho}}\right),
\end{split}
\end{equation}
where the functional $\mathcal{L}_l(t,\xi)$ takes the following form.
\begin{equation*}
\mathcal{L}_l(t,\xi)
 :=
  \frac{1}{2}
   |\widehat{\rho}|^2
   -
   \beta_3 |\xi| {\rm Re}(\widehat{\rho} \overline{\widehat{d}})
   +
   \frac12 |\widehat{d}|^2
   +
   \frac{c_1}{4\mathcal{C}} |\widehat{\Theta}|^2
   +
   \frac{3c_2}{4\gamma} |\widehat{\Xi}|^2.
\end{equation*}
We denote that $r_0$ and $R_0$ are two fixed positive constants satisfying
\begin{equation}
 r_0:=
 \min\Big\{
  \frac{1}{2\nu},
\left[\left(\frac{c_1|c_4|}{4\mathcal{C}}+\frac{3c_2|c_6|}{4\gamma}\right)^2\frac{\mathcal{C}}{c_1c_3}\right]^{-\frac{1}{2}}
  \frac{3c_2c_5}{4\gamma},
  \frac{1}{2}
  \Big\},
\end{equation}
and
\begin{equation}
R_0:=2^{k_1+1},
\end{equation}
where the positive integer $k_1$ is defined by \eqref{3.13}.
Moreover, the constant $\beta_3$ is chosen to be a small constant which satisfies
\begin{equation}
0<
\beta_3
\leq
\min\Big\{
\frac{2\nu}{3},
\frac{c_3}{16\mathcal{C}c_1},
\frac{1}{2R_0}
\Big\}.
\end{equation}
Since $\beta_3$ is small, then we get, for any $|\xi|\leq R_0$, that
\begin{equation}\label{3.43}
 \mathcal{L}_l(t,\xi)
 \sim
  |\widehat{\rho}|^2+ |\widehat{d}|^2 + |\widehat{\Theta}|^2+ |\widehat{\Xi}|^2,
\end{equation}
which implies that there exists a positive constant $C_4=C_4(\beta_3,\nu,c_1,c_2,c_3,c_5,R_0)$, such that
\begin{equation}\label{3.44}
 C_4
 |\xi|^2\mathcal{L}_l(t,\xi)
 \leq
  \frac{3\beta_3}{4} |\xi|^2|\widehat{\rho}|^2
 +
  (\nu-\beta_3) |\xi|^2 |\widehat{d}|^2
  +
  \left(\frac{c_1c_3}{4\mathcal{C}}-2\beta_3 c_1^2\right) |\xi|^2|\widehat{\Theta}|^2
  +
  \frac{3c_2c_5}{2\gamma} |\widehat{\Xi}|^2.
\end{equation}
From \eqref{3.39} and \eqref{3.44}, we have, for $0\leq |\xi|\leq R_0$,
\begin{equation}\label{3.45}
\begin{split}
&
 \frac{\rm d}{{\rm d}t}
  \mathcal{L}_l(t,\xi)
 +
 |\xi|^2\mathcal{L}_l(t,\xi)\\
\leq
 &
 \frac{\beta_3 \nu}{2} |\xi|^3(|\widehat{\rho}|^2+|\widehat{d}|^2)
 +
 \left[
 \left(\frac{c_1|c_4|}{4\mathcal{C}}+\frac{3c_2|c_6|}{4\gamma}\right)^2
  \frac{\mathcal{C}}{c_1c_3}
  +
  2\beta_3 c_2^2
 \right]
   |\xi|^2 |\widehat{\Xi}|^2
 +
 {\rm Re}\left(\widehat{\mathcal{S}^1}\overline{\widehat{\rho}}\right)\\
 &
 +
  {\rm Re}\left(\widehat{\mathcal{S}^2}\overline{\widehat{d}}\right)
 +
 \frac{c_1}{2\mathcal{C}}
 {\rm Re}\left(\widehat{\mathcal{S}^3}\overline{\widehat{\Theta}}\right)
 +
 \frac{3c_2}{2\gamma}
 {\rm Re}\left(\widehat{\mathcal{S}^4}\overline{\widehat{\Xi}}\right)
 -
 \beta_3
 {\rm Re}\left(\widehat{\mathcal{S}^1}\overline{\widehat{d}}\right)
 -
 \beta_3
 {\rm Re}\left(\widehat{\mathcal{S}^2}\overline{\widehat{\rho}}\right).
\end{split}
\end{equation}
Specially, for $0\leq |\xi|\leq r_0$, from \eqref{3.39} and \eqref{3.44}, we have
\begin{equation}\label{3.46}
\begin{split}
&
 \frac{\rm d}{{\rm d}t}
  \mathcal{L}_l(t,\xi)
 +
 |\xi|^2\mathcal{L}_l(t,\xi)\\
\lesssim
 &
 {\rm Re}\left(\widehat{\mathcal{S}^1}\overline{\widehat{\rho}}\right)
 +
  {\rm Re}\left(\widehat{\mathcal{S}^2}\overline{\widehat{d}}\right)
 +
 \frac{c_1}{2\mathcal{C}}
 {\rm Re}\left(\widehat{\mathcal{S}^3}\overline{\widehat{\Theta}}\right)
 +
 \frac{3c_2}{2\gamma}
 {\rm Re}\left(\widehat{\mathcal{S}^4}\overline{\widehat{\Xi}}\right)\\
 &
 -
 \beta_3
 {\rm Re}\left(\widehat{\mathcal{S}^1}\overline{\widehat{d}}\right)
 -
 \beta_3
 {\rm Re}\left(\widehat{\mathcal{S}^2}\overline{\widehat{\rho}}\right).
\end{split}
\end{equation}
From \eqref{3.33}, it is obvious that
\begin{equation}
 \widehat{\theta}= \frac{b\widehat{\Theta}+ 2\widehat{\Xi}}{3\mathcal{C}b+2\gamma} ,~~~~\widehat{j_0}= \frac{\gamma \widehat{\Theta}-3\mathcal{C} \widehat{\Xi}}{3\mathcal{C} b+2\gamma}.
\end{equation}
Then there exists some positive constant $C_5$, such that
\begin{equation}\label{3.48}
 C_5^{-1} \left(|\widehat{\theta}|^2 +|\widehat{j_0}|^2 \right) \leq |\widehat{\Theta}|^2 +|\widehat{\Xi}|^2  \leq C_5 \left(|\widehat{\theta}|^2 +|\widehat{j_0}|^2 \right),
\end{equation}
which and \eqref{3.43} imply that
\begin{equation}\label{3.49}
\mathcal{L}_l(t,\xi)
 \sim
  |\widehat{\rho}|^2+ |\widehat{d}|^2 + |\widehat{\theta}|^2+ |\widehat{j_0}|^2.
\end{equation}

Using the inequalities that have been obtained in the different frequency regimes and the Parseval formula, we deduce the estimates for $(\rho_k,d_k,\theta_k,j_{0,k})$. More precisely, from \eqref{3.46} and \eqref{3.49}, by taking $k_0=[\log_2r_0]-1$, we get for any $k\leq k_0$
\begin{equation}\label{3.50}
\begin{split}
&
\|(\rho_k , d_k,\theta_k,j_{0,k})(t) \|_{L^2}^2
+
\int_0^t
\Big(
2^{2k}
\|(\rho_k,d_k ,\theta_k,j_{0,k})(\tau)\|_{L^2}^2
+
\|(\gamma\theta_k-b j_{0,k})(\tau)\|_{L^2}^2
\Big)
{\rm d}\tau\\
\lesssim
&
\|(\rho_k,d_k ,\theta_k,j_{0,k})(0)\|_{L^2}^2
+
\int_0^t
\int_{\mathbb{R}^3}
\hat\varphi_j^2(\xi)
{\rm Re}\left(\widehat{\mathcal{S}^1}\overline{\widehat{\rho}}\right)
 {\rm d}\xi
 {\rm d}\tau\\
 &
 +
 \int_0^t
\int_{\mathbb{R}^3}
\hat\varphi_j^2(\xi)
\left\{
  {\rm Re}\left(\widehat{\mathcal{S}^2}\overline{\widehat{d}}\right)
  +
 \frac{c_1}{2\mathcal{C}}
 {\rm Re}\left(\widehat{\mathcal{S}^3}\overline{\widehat{\Theta}}\right)
 +
 \frac{3c_2}{2\gamma}
 {\rm Re}\left(\widehat{\mathcal{S}^4}\overline{\widehat{\Xi}}\right)
 \right\}
 {\rm d}\xi
 {\rm d}\tau\\
 &
 +
 \int_0^t
\int_{\mathbb{R}^3}
\hat\varphi_j^2(\xi)
\left\{
 -
 \beta_3
 {\rm Re}\left(\widehat{\mathcal{S}^1}\overline{\widehat{d}}\right)
 -
 \beta_3
 {\rm Re}\left(\widehat{\mathcal{S}^2}\overline{\widehat{\rho}}\right)
 \right\}
 {\rm d}\xi
 {\rm d}\tau.
\end{split}
\end{equation}
By using the H${\rm \ddot{o}}$lder inequality and the Young inequality, the second term on the right hand side of \eqref{3.50} can be estimated as follows.
\begin{equation*}
\begin{split}
\int_0^t
\int_{\mathbb{R}^3}
\hat\varphi_j^2(\xi)
{\rm Re}\left(\widehat{\mathcal{S}^1}\overline{\widehat{\rho}}\right)
{\rm d}\xi
{\rm d}\tau
\leq
&
\int_0^t
\|\hat\varphi_j(\xi)\widehat{\mathcal{S}^1}\|_{L^2}
\|\hat\varphi_j(\xi)\overline{\widehat{\rho}}\|_{L^2}
{\rm d}\tau\\
\leq
&
\epsilon
\int_0^t
2^{2k}
\|\rho_k\|_{L^2}^2
{\rm d}\tau
+
C_\epsilon
\int_0^t
2^{-2k}\|\mathcal{S}^1_k\|_{L^2}^2
{\rm d}\tau,
\end{split}
\end{equation*}
where $\epsilon$ is a small positive constant.
The other terms on the right hand side of \eqref{3.50} can be estimated in the same way. Therefore, for $k\leq k_0$, we have
\begin{equation}\label{3.51}
\begin{split}
&
\|(\rho_k , d_k,\theta_k,j_{0,k})(t) \|_{L^2}^2
+
\int_0^t
\Big(
2^{2k}
\|(\rho_k,d_k ,\theta_k,j_{0,k})(\tau)\|_{L^2}^2
+
\|(\gamma\theta_k-b j_{0,k})(\tau)\|_{L^2}^2
\Big)
{\rm d}\tau\\
\lesssim
&
\|(\rho_k,d_k ,\theta_k,j_{0,k})(0)\|_{L^2}^2
+
C_\epsilon
\int_0^t
2^{-2k}
\|(\mathcal{S}^1_k,\mathcal{S}^2_k,\mathcal{S}^3_k,\mathcal{S}^4_k)(\tau)\|_{L^2}^2
{\rm d}\tau.
\end{split}
\end{equation}

\subsubsection{Estimates on the incompressible part $(\mathcal{P} u)_k$}

Below, we will derive the estimates of $\widehat{(\mathcal{P} u)_k}$. The linearized equations of \eqref{3.6} in Fourier variables take the following form.
\begin{equation}\label{3.52}
 \partial_t\widehat{(\mathcal{P} u)_k}  + \mu|\xi|^2\widehat{(\mathcal{P} u)_k}  = \widehat{(\mathcal{P}S^2)_k}.
\end{equation}
By a direct calculation, it follows from \eqref{3.52} and the Bernstein inequality that for all $|\xi|\geq 0$
\begin{equation}\label{3.53}
\begin{split}
&
\|(\mathcal{P} u)_k(t)\|_{L^2}^2
+
2\mu
\int_0^t
2^{2k}
\|(\mathcal{P} u)_k(\tau)\|_{L^2}^2{\rm d}\tau\\
\lesssim
&
 \|(\mathcal{P} u)_k(0)\|_{L^2}^2
 +
 \int_0^t
 \int_{\mathbb{R}^3}
 \hat\varphi_j(\xi)^2
 {\rm Re}
 (
  \widehat{(\mathcal{P} S^2)}
  \overline{\widehat{(\mathcal{P} u)}}
 )
 {\rm d}\xi
 {\rm d}\tau.
\end{split}
\end{equation}
Similar as the estimate \eqref{3.51}, for any integer $k$, we have
\begin{equation}\label{3.54}
\begin{split}
\|(\mathcal{P} u)_k(t)\|_{L^2}^2
+
\mu
\int_0^t
2^{2k}
\|(\mathcal{P} u)_k(\tau)\|_{L^2}^2{\rm d}\tau
\lesssim
 \|(\mathcal{P} u)_k(0)\|_{L^2}^2
 +
 C_\epsilon
 \int_0^t
 2^{-2k}
 \|(\mathcal{P} S^2)_k\|_{L^2}^2
 {\rm d}\tau.
\end{split}
\end{equation}

\subsection{Medium-frequency analysis}

According to the analysis in Section 3.1.2, it could be checked that the eigenvalues of $A(\xi)$ have positive real parts for small enough $\varrho$. In order to show that no condition is required for large $\varrho$, by Routh-Hurwitz theorem, the roots of the function $P(\lambda)$ have positive real part if and only if the following determinants are positive:
\begin{equation*}
A_1:= a_1,\ \
A_2
:=
\left|
  \begin{array}{cc}
    a_1 & a_0 \\
    a_3 & a_2 \\
  \end{array}
\right|,\ \
A_3
:=
\left|
  \begin{array}{ccc}
    a_1 & a_0 & 0 \\
    a_3 & a_2 & a_1 \\
    0 & a_4 & a_3 \\
  \end{array}
\right|,\ \
A_4
:=
\left|
  \begin{array}{cccc}
    a_1 & a_0 & 0 & 0 \\
    a_3 & a_2 & a_1 & 0 \\
    0 & a_4 & a_3 & 0 \\
    0 & 0 & 0 & a_4 \\
  \end{array}
\right|.
\end{equation*}
It is clear that $A_1>0$ and ${\rm sgn}A_3={\rm sgn}A_4$. It is easy to check that
\begin{equation*}
\begin{split}
A_2
 =
 &
  a_1a_2-a_0a_3\\
 :=
 &
 a_{21}\varrho^6
 +
 a_{22}\varrho^4
 +
 a_{23}\varrho^2
 >
  0,
\end{split}
\end{equation*}
where the coefficients $a_{21},\ a_{22}$ and $a_{23}$ are defined by
\begin{equation*}
\begin{split}
 a_{21}
 :=
 &
  (a+\frac{2}{3}\kappa+\nu)
  (\nu a +\frac{2}{3}\nu\kappa +\frac{2}{3} a \kappa)
  -
  \frac{2}{3} a\kappa \nu
  >0,\\
a_{22}
 :=
 &
  (\frac{2\gamma}{3}+\mathcal{C}b)
  (\nu a +\frac{2}{3}\nu\kappa +\frac{2}{3} a\kappa)
  +
  (a+\frac{2\kappa}{3}+\nu)
  (\frac{2\gamma\nu}{3}+\mathcal{C}b\nu+\frac{2\gamma a}{3}+\frac{2\kappa\mathcal{C}b}{3}+\frac{5}{3})\\
  &
  -
  (\frac{2(\gamma a+\kappa\mathcal{C}b)\nu}{3}+\frac{5a}{3}+\frac{2\kappa}{3})
  >0,
  \\
 a_{23}
 :=
 &
  (\frac{2\gamma}{3}+\mathcal{C}b)
  (\frac{2\gamma\nu}{3}+\mathcal{C}b\nu+\frac{2\gamma a}{3}+\frac{2\kappa\mathcal{C}b}{3}+\frac{5}{3})
  -
  (\frac{5}{3}\mathcal{C}b+\frac{8\gamma}{9})
  >0.
  \\
\end{split}
\end{equation*}
By a direct but tedious calculation, we get
\begin{equation*}
\begin{split}
A_3
 =
 &
  a_3(a_1a_2-a_0a_3)
  -
  a_1^2a_4
>
  0.
\end{split}
\end{equation*}

Following the arguments in Section 3.3 of \cite{Danchin-Ducomet}, we have the following lemma.
\begin{lemma}[\cite{Danchin-Ducomet}]\label{Lemma 3.1}
For any given constants $r$ and $R$ with $0<r<R$, there exists a positive constant $\iota$ (depending only on $r,R$ and the constants $\nu$, $k$, $\gamma$, $\kappa$, $\mathcal C$,  $a$ and $b$) such that
\begin{eqnarray}\label{3.55}
 |{\rm e}^{-t A(\xi)}| \leq C {\rm e}^{-\iota t} ~~{\rm for\ all}~~r\leq |\xi|\leq R~~ {\rm and}~~t\in \mathbb{R}^+.
\end{eqnarray}
\end{lemma}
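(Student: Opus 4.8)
\emph{The plan} is to treat \eqref{3.55} as a finite-dimensional, purely linear statement and to combine the Routh--Hurwitz computation already carried out above with a compactness argument on the closed annulus $K:=\{\xi\in\mathbb R^3:\ r\le|\xi|\le R\}$, passing from spectral information to the operator-norm bound via a Dunford (holomorphic functional calculus) integral. Restricting to $K$ is essential: it keeps $A(\xi)$ bounded, so one is outside the high-frequency regime of Section 3.1, and it keeps the spectral gap of $A(\xi)$ bounded away from $0$, so one is outside the low-frequency regime, where three eigenvalues vanish like $|\xi|^2$.

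\emph{First I would fix a uniform spectral gap.} For $\varrho=|\xi|\in[r,R]$ the computations above give $A_1>0$, $A_2>0$, $A_3>0$; since $a_4=\tfrac23(a\kappa\varrho^2+a\gamma+\kappa\mathcal Cb)\varrho^4>0$ for $\varrho>0$ (recall $a,\kappa,\gamma,\mathcal C,b>0$) and $A_4=a_4A_3$, one also gets $A_4>0$. By the Routh--Hurwitz criterion applied to $P(-\mu)=\mu^4+a_1\mu^3+a_2\mu^2+a_3\mu+a_4$, all four roots $\lambda_1(\xi),\dots,\lambda_4(\xi)$ of the characteristic polynomial $P$ of $A(\xi)$ have strictly positive real part, so $m(\xi):=\min_{1\le j\le4}{\rm Re}\,\lambda_j(\xi)>0$ on $K$. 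Because $a_1,\dots,a_4$ are polynomials in $\varrho$, the eigenvalues depend continuously on $\xi$ (the spectrum varies continuously in the Hausdorff metric), hence $m$ is continuous on $K$; being positive on the compact set $K$ it attains a positive minimum, and I would set $\iota:=\tfrac12\min_{\xi\in K}m(\xi)>0$, which depends only on $r,R$ and on $\nu,\kappa,\gamma,\mathcal C,a,b$, as required.

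\emph{Then I would derive the uniform exponential bound.} Continuity of $A$ on $K$ gives $M:=1+\sup_{\xi\in K}|A(\xi)|<\infty$; since the spectral radius is at most the operator norm and ${\rm Re}\,\lambda_j(\xi)\ge 2\iota>\iota$, the spectrum of $A(\xi)$ lies, for every $\xi\in K$, in the interior of the fixed rectangle $Q:=\{z\in\mathbb C:\ \iota\le{\rm Re}\,z\le M,\ |{\rm Im}\,z|\le M\}$. With $\Gamma:=\partial Q$ positively oriented, holomorphic functional calculus applied to the entire function $z\mapsto{\rm e}^{-tz}$ yields
\begin{equation*}
{\rm e}^{-tA(\xi)}=\frac{1}{2\pi i}\oint_{\Gamma}{\rm e}^{-tz}\,(zI-A(\xi))^{-1}\,{\rm d}z,\qquad t\ge0,\ \xi\in K.
\end{equation*}
The resolvent $(z,\xi)\mapsto(zI-A(\xi))^{-1}$ is continuous, hence bounded by some $C_\Gamma$, on the compact set $\Gamma\times K$ (no $z\in\Gamma$ is an eigenvalue of $A(\xi)$), and ${\rm Re}\,z\ge\iota$ on $\Gamma$ gives $|{\rm e}^{-tz}|\le{\rm e}^{-\iota t}$; therefore
\begin{equation*}
|{\rm e}^{-tA(\xi)}|\le\frac{|\Gamma|\,C_\Gamma}{2\pi}\,{\rm e}^{-\iota t}=:C\,{\rm e}^{-\iota t}\qquad\text{for all }r\le|\xi|\le R,\ t\ge0,
\end{equation*}
which is precisely \eqref{3.55}.

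\emph{The main obstacle} is uniformity, not positivity: a single $A(\xi)$ with spectrum in $\{{\rm Re}\,z>0\}$ trivially generates an exponentially decaying semigroup, but $A(\xi)$ may fail to be diagonalizable at isolated frequencies, so the constant in a pointwise bound could a priori blow up (through Jordan-block, polynomial-in-$t$ factors) as $\xi$ ranges over $K$, and the gap $m(\xi)$ must be controlled below uniformly. Both are settled by compactness of $K$ together with continuity of $\xi\mapsto{\rm spec}\,A(\xi)$ and of $(z,\xi)\mapsto(zI-A(\xi))^{-1}$; in particular the contour representation absorbs all Jordan-block growth into the single constant $C_\Gamma$ and never requires $A(\xi)$ to be diagonalizable. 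The residual bookkeeping -- that $a_4>0$ and $A_4=a_4A_3$ for $\varrho>0$, and that $\Gamma$ may be chosen independent of $\xi\in K$ -- is immediate from the explicit formulas. This follows the argument sketched in Section 3.3 of \cite{Danchin-Ducomet}.
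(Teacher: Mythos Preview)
Your proof is correct. The paper itself does not prove this lemma but simply cites Section~3.3 of \cite{Danchin-Ducomet} after carrying out the Routh--Hurwitz verification; your argument---spectral gap via Routh--Hurwitz plus compactness, then operator-norm decay via a fixed Dunford contour---supplies exactly the details that citation stands in for, and you correctly isolate uniformity in $\xi$ (possible Jordan blocks, $\xi$-dependence of the gap) as the only genuine issue and resolve it cleanly via compactness of $K$ and continuity of the resolvent on $\Gamma\times K$.
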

For the system \eqref{3.30} and the Duhamel principle, the inequality \eqref{3.55} yields for all $r\leq |\xi|\leq R$
\begin{equation}\label{3.56}
\begin{split}
|(\widehat{\rho} , \widehat{d} ,\widehat{\theta},\widehat{j_{0}})(t,\xi) |
\lesssim
&
{\rm e}^{-\iota t}
|(\widehat{\rho} , \widehat d ,\widehat{\theta},\widehat{j_0})(0,\xi)|
+
\int_0^t
{\rm e}^{-\iota(t-\tau)}
|(\widehat{S^1},\widehat{D},\widehat{S^3},\widehat{S^4})(\tau,\xi)|
{\rm d}\tau,
\end{split}
\end{equation}
where $r$ and $R$ are any given positive constants.
\begin{proposition}\label{Proposition 4.2}
For any integer $k$ with $k_0\leq k\leq k_1$, there exists a positive constant $C_7$ depending on $k_1$, such that
\begin{equation}\label{3.57}
\begin{split}
\int_0^t
\|(\rho_k,d_k,\theta_k,j_{0,k})(\tau)\|_{L^2}^2
{\rm d}\tau
\leq
&
C\|(\rho_k,d_k,\theta_k,j_{0,k})(0)\|_{L^2}^2\\
&
+
C\int_0^t
\|(S^1_k,S^2_k,S^3_k,S^4_k)(\tau)\|_{L^2}^2
{\rm d}\tau.
\end{split}
\end{equation}
\end{proposition}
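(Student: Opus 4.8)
The plan is to rely on the uniform exponential decay of the semigroup ${\rm e}^{-tA(\xi)}$ on a fixed annulus, furnished by Lemma \ref{Lemma 3.1}, together with the pointwise Duhamel estimate \eqref{3.56}. Since $k_0\leq k\leq k_1$, the Fourier support of $\dot\Delta_k f$ is contained in an annulus $\{c\,2^{k_0}\leq|\xi|\leq C\,2^{k_1}\}$; choosing $r$ and $R$ to be fixed positive constants (depending only on $k_0$, $k_1$, equivalently on $r_0$ and $R_0$) whose annulus contains all of these dyadic shells, Lemma \ref{Lemma 3.1} provides a single $\iota>0$ for which \eqref{3.56} holds on the support of $\hat\varphi_k$ for every such $k$.

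Multiplying \eqref{3.56} by $\hat\varphi_k(\xi)$, taking the $L^2$ norm in $\xi$, applying Minkowski's integral inequality to the convolution term, and using Plancherel's theorem on both sides, one obtains
\begin{equation*}
\|(\rho_k,d_k,\theta_k,j_{0,k})(t)\|_{L^2}
\lesssim
{\rm e}^{-\iota t}\|(\rho_k,d_k,\theta_k,j_{0,k})(0)\|_{L^2}
+
\int_0^t
{\rm e}^{-\iota(t-\tau)}
\|(S^1_k,D_k,S^3_k,S^4_k)(\tau)\|_{L^2}
\,{\rm d}\tau .
\end{equation*}
Replacing $D_k$ by $S^2_k$ via the bound $\|D_k\|_{L^2}\leq\|S^2_k\|_{L^2}$ recorded in \eqref{3.24} puts the source term into the desired form.

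It then remains to square this inequality and integrate in $t$ over $[0,t]$. The contribution of the first term is bounded by $\big(\int_0^\infty {\rm e}^{-2\iota t}\,{\rm d}t\big)\|(\rho_k,d_k,\theta_k,j_{0,k})(0)\|_{L^2}^2=\frac{1}{2\iota}\|(\rho_k,d_k,\theta_k,j_{0,k})(0)\|_{L^2}^2$. For the convolution term I would apply Young's inequality in time, using ${\rm e}^{-\iota\cdot}\in L^1(\mathbb{R}^+)$, to get
\begin{equation*}
\int_0^t\Big(\int_0^\tau {\rm e}^{-\iota(\tau-s)}\|(S^1_k,S^2_k,S^3_k,S^4_k)(s)\|_{L^2}\,{\rm d}s\Big)^2{\rm d}\tau
\leq
\frac{1}{\iota^2}\int_0^t\|(S^1_k,S^2_k,S^3_k,S^4_k)(\tau)\|_{L^2}^2\,{\rm d}\tau .
\end{equation*}
Combining these two bounds gives \eqref{3.57} with a constant $C$ depending only on $\iota$ (hence on $k_1$) and on the fixed physical parameters.

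The proof is short precisely because Lemma \ref{Lemma 3.1} does the heavy lifting: no damping or dissipative structure of the system enters in this frequency band. The only points requiring a little care are to confirm that a single $\iota>0$ works uniformly over all dyadic blocks with $k_0\leq k\leq k_1$ (there being finitely many, one may alternatively take the minimum of the corresponding constants), and to pass correctly from the pointwise-in-$\xi$ estimate \eqref{3.56} to the time-integrated $L^2_x$ bound, which is where Minkowski's and Young's inequalities enter. I would expect the tracking of the dependence of $C$ on $k_1$ through $\iota$ to be the most delicate --- though still entirely routine --- aspect.
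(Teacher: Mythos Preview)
Your proposal is correct and follows essentially the same approach as the paper: both invoke Lemma~\ref{Lemma 3.1} on a fixed annulus containing the supports of all $\hat\varphi_k$ with $k_0\leq k\leq k_1$, pass from the pointwise Duhamel bound \eqref{3.56} to an $L^2_x$ estimate via Plancherel and Minkowski, then square and integrate in time. The only cosmetic difference is that you handle the time convolution by quoting Young's inequality $\|{\rm e}^{-\iota\cdot}*f\|_{L^2_t}\leq\|{\rm e}^{-\iota\cdot}\|_{L^1_t}\|f\|_{L^2_t}$, whereas the paper spells this out by applying H\"older's inequality to split the kernel and then exchanging the order of integration via Fubini; these are equivalent arguments yielding the same constant.
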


\begin{proof}
Taking $r=2^{k_0-1}$ and $R=R_0$ and using \eqref{3.57}, we have for $k_0\leq k\leq k_1$
\begin{equation}
\begin{split}
\|(\rho_k,d_k,\theta_k,j_{0,k}) (t)\|_{L^2}
\leq
&
C{\rm e}^{-Ct}\|(\rho_k,d_k,\theta_k,j_{0,k})(0)\|_{L^2}\\
&
+
C\int_0^t
{\rm e}^{-C(t-\tau)}
\|(S^1_k,D_k,S^3_k,S^4_k)(\tau)\|_{L^2}
{\rm d}\tau.
\end{split}
\end{equation}
By a direct calculation, we have
\begin{equation}\label{3.59}
\begin{split}
\int_0^t
\|(\rho_k,d_k,\theta_k,j_{0,k}) (s)\|_{L^2}^2{\rm d}s
\leq
&
C\|(\rho_k,d_k,\theta_k,j_{0,k})(0)\|_{L^2}^2\\
&
+
C\int_0^t
{\rm d}s
\left(
\int_0^s
{\rm e}^{-C(s-\tau)}
\|(S^1_k,D_k,S^3_k,S^4_k)(\tau)\|_{L^2}
\right)^2.
\end{split}
\end{equation}
By using the H${\rm \ddot{o}}$ler inequality and exchanging the order of integration, the last term of \eqref{3.59} can be estimated as follows.
\begin{equation}\label{3.60}
\begin{split}
&
\int_0^t
{\rm d}s
\left(
\int_0^s
{\rm e}^{-C(s-\tau)}\|(S^1_k,D_k,S^3_k,S^4_k)(\tau)\|_{L^2}
{\rm d}\tau
\right)^2\\
\leq
&
\int_0^t
{\rm d}s
\left(
\int_0^s
{\rm e}^{-C(s-\tau)}
{\rm d}\tau
\right)
\left(
\int_0^s
{\rm e}^{-C(s-\tau)}\|(S^1_k,D_k,S^3_k,S^4_k)(\tau)\|_{L^2}^2
{\rm d}\tau
\right) \\
\leq
&
C
\int_0^t
{\rm d}s
\int_0^s
{\rm e}^{-C(s-\tau)}\|(S^1_k,D_k,S^3_k,S^4_k)(\tau)\|_{L^2}^2
{\rm d}\tau\\
\leq
&
C
\int_0^t
{\rm d}\tau
\int_\tau^t
{\rm e}^{-C(s-\tau)}
\|(S^1_k,D_k,S^3_k,S^4_k)(\tau)\|_{L^2}^2
{\rm d}s\\
\leq
&
C\int_0^t
\|(S^1_k,D_k,S^3_k,S^4_k)(\tau)\|_{L^2}^2
{\rm d}\tau.
\end{split}
\end{equation}
Plugging \eqref{3.60} into \eqref{3.59} yields \eqref{3.57}.
\end{proof}

Similar as the estimate \eqref{3.51}, taking $R=R_0$, we have from \eqref{3.45} that for $k\leq k_1$
\begin{equation}\label{3.61}
\begin{split}
&
\|(\rho_k , d_k,\theta_k,j_{0,k})(t) \|_{L^2}^2
+
\int_0^t
\Big(
2^{2k}
\|(\rho_k,d_k ,\theta_k,j_{0,k})(\tau)\|_{L^2}^2
+
\|(\gamma\theta_k-b j_{0,k})(\tau)\|_{L^2}^2
\Big)
{\rm d}\tau\\
\lesssim
&
\|(\rho_k,d_k ,\theta_k,j_{0,k})(0)\|_{L^2}^2
+
\int_0^t
2^{3k}
\|(\rho_k,d_k)(\tau)\|_{L^2}^2
{\rm d}\tau
+
\int_0^t
2^{2k}
\|(\theta_k,j_{0,k})(\tau)\|_{L^2}^2
{\rm d}\tau\\
&
+
C_\epsilon
\int_0^t
2^{-2k}
\|(\mathcal{S}^1_k,\mathcal{S}^2_k,\mathcal{S}^3_k,\mathcal{S}^4_k)(\tau)\|_{L^2}^2
{\rm d}\tau.
\end{split}
\end{equation}
For $k_0\leq k\leq k_1$, it follows from \eqref{3.57} that
\begin{equation}\label{3.62}
\begin{split}
&
\int_0^t
2^{3k}
\|(\rho_k,d_k)(\tau)\|_{L^2}^2
{\rm d}\tau
+
\int_0^t
2^{2k}
\|(\theta_k,j_{0,k})(\tau)\|_{L^2}^2
{\rm d}\tau\\
\leq
&
C
(2^{3k_1}+2^{2k_1})
\|(\rho_k,d_k,\theta_k,j_{0,k})(0)\|_{L^2}^2\\
&
+
C
(2^{4k_1}+2^{3k_1})
\int_0^t
2^{-2k}
\|(S^1_k,S^2_k,S^3_k,S^4_k)(\tau)\|_{L^2}^2
{\rm d}\tau.
\end{split}
\end{equation}
Putting \eqref{3.62} into \eqref{3.61} and recalling the definitions of $\mathcal{S}_k^i$ ($i=1,2,3,4$) yield
\begin{equation}\label{3.63}
\begin{split}
&
\|(\rho_k , d_k,\theta_k,j_{0,k})(t) \|_{L^2}^2
+
\int_0^t
\Big(
2^{2k}
\|(\rho_k,d_k ,\theta_k,j_{0,k})(\tau)\|_{L^2}^2
+
\|(\gamma\theta_k-b j_{0,k})(\tau)\|_{L^2}^2
\Big)
{\rm d}\tau\\
\lesssim
&
\|(\rho_k,d_k ,\theta_k,j_{0,k})(0)\|_{L^2}^2
+
\int_0^t
2^{-2k}
\|(S^1_k,S^2_k,S^3_k,S^4_k)(\tau)\|_{L^2}^2
{\rm d}\tau\\
&
+
C_\epsilon
\int_0^t
2^{-2k}
\|(S^1_k,S^2_k,S^3_k,S^4_k)(\tau)\|_{L^2}^2
{\rm d}\tau,
\end{split}
\end{equation}
for $k_0\leq k\leq k_1$.

Now, we show the estimates of long wave parts for the solution to \eqref{3.1} as follows. According to the definition in \eqref{5.1} and the definitions of $\mathcal{S}^i\ (i=1,2,3,4)$, $D_k$ and $\mathcal{P}S^2_k$, multiplying by $2^{ks^\prime}$ and summing up over $k$ with $k\leq k_0$ for \eqref{3.51}, $k_0\leq k\leq k_1$ for \eqref{3.63} and $k\leq k_1$ for \eqref{3.54}, we get the following lemma.
\begin{proposition}\label{Proposition 3.4}
Let $s^\prime$ be any real number. The following inequality holds true for the solution to \eqref{3.1}
\begin{equation}\label{3.64}
\begin{split}
&
\| (\rho,  u ,\theta,j_{0})^L(t) \|^2_{\dot B_{2,2}^{s^\prime}}
 +
 \int_0^t
 \Big(
 \|(\rho,  u ,\theta,j_{0})^L(\tau)\|^2_{\dot B_{2,2}^{s^\prime+1}}
 +
 \|(\gamma \theta-b j_0)^L(\tau)\|^2_{\dot B_{2,2}^{s^\prime}}
 \Big)
 {\rm d}\tau\\
\lesssim
&
   \| (\rho,  u ,\theta,j_{0})^L(0) \|^2_{\dot B_{2,2}^{s^\prime}}
   +
   \int_0^t
   \|(S^1,S^2,S^3,S^4)^L(\tau)\|_{\dot B_{2,2}^{s^\prime-1}}^2
    {\rm d}\tau.
\end{split}
\end{equation}
\end{proposition}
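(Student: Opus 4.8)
The strategy is to patch together the three frequency-localized estimates already established and perform a weighted $\ell^2$ summation, mirroring the argument used for the high-frequency regime in Proposition~\ref{Proposition 3.3}. The ingredients are \eqref{3.51}, which controls the compressible block $(\rho_k,d_k,\theta_k,j_{0,k})$ on the genuinely low band $k\le k_0$; \eqref{3.63}, which controls the same block on the medium band $k_0\le k\le k_1$; and \eqref{3.54}, which controls the incompressible block $(\mathcal{P}u)_k$ for every integer $k$, in particular for $k\le k_1$. Together these cover every dyadic block with $k\le k_1$, i.e. precisely the range singled out by the low-frequency truncation $(\cdot)^L$ of \eqref{5.1}.

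First I would pass between the variables $(d,\mathcal{P}u)$ and $u$. Since $d=\Lambda^{-1}{\rm div}\,u$, $\mathcal{P}u=\Lambda^{-1}{\rm curl}\,u$ and $u=-\Lambda^{-1}\nabla d-\Lambda^{-1}{\rm div}\,\mathcal{P}u$, the operators involved are homogeneous Fourier multipliers of order $0$; they are bounded on $L^2$ and commute with each $\dot\Delta_k$, so $\|u_k\|_{L^2}\sim\|d_k\|_{L^2}+\|(\mathcal{P}u)_k\|_{L^2}$ block by block, and this comparison survives the weighted summation at the level of the truncated Besov norms. In particular, the initial-data contributions $\|d_k^0\|_{L^2}$ and $\|(\mathcal{P}u)_k^0\|_{L^2}$ on the right of \eqref{3.51}, \eqref{3.63} and \eqref{3.54} are each $\lesssim\|u_k^0\|_{L^2}$, while on the left the quantities $\|d_k\|_{L^2}$ and $\|(\mathcal{P}u)_k\|_{L^2}$ together reconstruct $\|u_k\|_{L^2}$. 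One also notes that the source functions $\mathcal{S}^1,\dots,\mathcal{S}^4$, as well as $D=\Lambda^{-1}{\rm div}\,S^2$ and $\mathcal{P}S^2$, are fixed linear combinations of $S^1,S^2,S^3,S^4$ post-composed with order-zero multipliers, hence $\|\mathcal{S}^i_k\|_{L^2}+\|D_k\|_{L^2}+\|(\mathcal{P}S^2)_k\|_{L^2}\lesssim\|(S^1_k,S^2_k,S^3_k,S^4_k)\|_{L^2}$ for each $k$.

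Then I would multiply \eqref{3.51} by $2^{2ks^\prime}$ and sum over $k\le k_0$, multiply \eqref{3.63} by $2^{2ks^\prime}$ and sum over $k_0\le k\le k_1$, and multiply \eqref{3.54} by $2^{2ks^\prime}$ and sum over $k\le k_1$. By the definition in \eqref{5.1}, for a generic field $f$ one has $\sum_{k\le k_1}2^{2ks^\prime}\|f_k\|_{L^2}^2\sim\|f^L\|_{\dot B_{2,2}^{s^\prime}}^2$, the terms carrying the extra factor $2^{2k}$ sum to $\|f^L\|_{\dot B_{2,2}^{s^\prime+1}}^2$, the damped term $2^{2ks^\prime}\|(\gamma\theta_k-bj_{0,k})\|_{L^2}^2$ sums to $\|(\gamma\theta-bj_0)^L\|_{\dot B_{2,2}^{s^\prime}}^2$, and since $2^{2ks^\prime}\cdot 2^{-2k}=2^{2k(s^\prime-1)}$ the source terms reassemble into $\|(S^1,S^2,S^3,S^4)^L\|_{\dot B_{2,2}^{s^\prime-1}}^2$. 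Collecting the three summed inequalities and invoking the comparison of the previous paragraph yields \eqref{3.64}. The only point requiring a little care --- and the mild obstacle here --- is the matching across the three bands: estimate \eqref{3.63} is written with lossy weights $2^{\pm 2k}$, but on the finite index set $k_0\le k\le k_1$ those factors lie between two constants depending only on $k_0$ and $k_1$, so \eqref{3.63} is effectively a plain $\dot B_{2,2}^{0}$ bound that glues continuously onto \eqref{3.51} at $k=k_0$ and onto Proposition~\ref{Proposition 3.3} at $k=k_1$; moreover \eqref{3.54} does not see the damped quantity $\gamma\theta-bj_0$, but this is harmless since that term appears only on the left-hand side of \eqref{3.64}. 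No genuine analytic difficulty remains at this stage, the substantive work having been the derivation of \eqref{3.51}, \eqref{3.63} and \eqref{3.54} in the preceding subsections.
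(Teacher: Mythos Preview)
Your proposal is correct and follows exactly the same approach as the paper: the paper's own proof is a single sentence that instructs one to multiply \eqref{3.51}, \eqref{3.63}, and \eqref{3.54} by the appropriate dyadic weight and sum over $k\le k_0$, $k_0\le k\le k_1$, and $k\le k_1$ respectively, invoking the definitions of $\mathcal{S}^i$, $D_k$, and $(\mathcal{P}S^2)_k$. You have simply fleshed out the details the paper leaves implicit, including the blockwise equivalence $\|u_k\|_{L^2}\sim\|d_k\|_{L^2}+\|(\mathcal{P}u)_k\|_{L^2}$ and the observation that on the finite medium band the stray factors of $2^{\pm 2k}$ are harmless constants.
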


\subsection{Estimates of the nonlinear problem}
It is ready for us to prove the global existence and uniqueness of solution stated in Theorem \ref{Theorem 1.1}.
Under the  uniform {\it a  priori} assumption \eqref{2.3}, by using the Sobolev imbedding inequality, we have
\begin{equation*}
\frac{1}{2}
 \leq
  \rho+1
   \leq
    \frac{3}{2}.
\end{equation*}
This will be used frequently in this section. Therefore, for some positive constant $C$, we obtain
\begin{equation}\label{3.65}
|g(\rho)|\leq C\rho,
\ \
|h(\rho)|\leq C
\ \
{\rm and}
\ \
|g^{(k)}(\rho)|,
\
|h^{(k)}(\rho)|
\leq
 C,
\ \
{\rm for}
\
k\geq 1.
\end{equation}
Firstly, for $t\in [0, T]$, we define
\begin{equation}\label{3.66}
N(t):=
\sup\limits_{0\leq \tau\leq t}
\|(\rho,u,\theta,j_0)(\tau)\|_{H^4}^2
+
\int_0^t
\Big(
\|\nabla \rho(\tau)\|_{H^3}^2
+
\|\nabla(u,\theta,j_0)(\tau)\|_{H^4}^2
+
\|(\gamma \theta-b j_0)(\tau)\|_{H^4}^2
\Big)
{\rm d}\tau.
\end{equation}
From Propositions \ref{Proposition 3.3} and \ref{Proposition 3.4} (with $s=s^\prime=4$ or $s=1,s^\prime=0$), Lemma \ref{Lemma 5.1} and Lemma \ref{Lemma 5.7} (with $q=2$, $p=\frac{6}{5}$ and $l=1$), by invoking the equivalence of the norms for $s\geq 0$
\begin{equation*}
\|f\|_{H^s} \sim \|f\|_{L^2}+\|f\|_{\dot H^s},
\end{equation*}
 we have
\begin{equation}\label{3.67}
\begin{split}
 N(t)
 \lesssim
 &
 N(0)
 +
 \int_0^t
 \Big(
 \| S^{12}(\tau)\|_{H^4}^2
 +
 \|(S^1,S^2,S^3,S^4,(\nabla u)^T \cdot\nabla\rho)(\tau)\|_{H^3}^2
 \Big)
 {\rm d}\tau \\
 &
 +
 \int_0^t
  \Big(
  \|\nabla^2\rho\|_{L^\infty}^2
  \|u^S\|_{H^3}^2
  +
  \|\nabla u (\tau)\|_{L^{\infty}}\|\nabla\rho^S(\tau)\|_{H^3}^2
 \Big) {\rm d}\tau\\
 &
 +
 \int_0^t
  \|\nabla u\|_{L^\infty}^2
  \sum\limits_{k>k_1}
  (1+2^{6k})
  \left(
 \sum\limits_{l\geq k-1}
 2^{k-l}
 \|\nabla \rho_l\|_{L^2}\right)^2
  {\rm d}\tau\\
 &
 +
 \int_0^t
 \|(S^1,S^2,S^3,S^4)(\tau)\|_{L^\frac{6}{5}}^2
 {\rm d}\tau,
\end{split}
\end{equation}
where we used the facts that
\begin{equation*}
\|(S^{12})^S\|_{\dot B^4_{2,2}}\lesssim \|S^{12}\|_{\dot H^4},
\end{equation*}
\begin{eqnarray*}
\|(S^1,S^2,S^3,S^4)^L\|_{\dot B^{4}_{2,2}} \lesssim \|(S^1,S^2,S^3,S^4)^L\|_{\dot B^{3}_{2,2}}\lesssim \|(S^1,S^2,S^3,S^4)\|_{\dot H^3},
\end{eqnarray*}
and
\begin{eqnarray*}
\begin{split}
\|(S^1,S^2,S^3,S^4)^L\|_{\dot B^{-1}_{2,2}}
\lesssim
&
\|\Lambda^{-1}(S^1,S^2,S^3,S^4)^L\|_{\dot B^{0}_{2,2}}\\
\lesssim
&
 \|\Lambda^{-1}(S^1,S^2,S^3,S^4)\|_{L^2}
\lesssim \|(S^1,S^2,S^3,S^4)\|_{L^\frac{6}{5}}.
\end{split}
\end{eqnarray*}
For the nonlinear terms on the r.h.s. of \eqref{3.67}, by using the Sobolev imbedding inequality in Lemmas \ref{Lemma 5.4} and \ref{Lemma 5.5}, we have
\begin{equation*}
\begin{split}
 \|S^{12}\|_{H^4}
 \lesssim
 &
 \|\rho\|_{L^{\infty}}\|{\rm div}  u \|_{H^4}+\|\rho\|_{H^4}\|{\rm div}  u \|_{L^{\infty}} \\
 \lesssim
 &
 \|\nabla \rho\|_{H^1}\|{\rm div} u\|_{H^4}
 +
 \|\rho\|_{H^4}\|\nabla{\rm div} u\|_{H^1}\\
 \lesssim
 &
 \|\rho\|_{H^4}
 \|{\rm div}  u \|_{H^4},\\
 \|S^1\|_{H^3}
 \leq
 &
 \|S^{11}\|_{H^3}+\|S^{12}\|_{H^3}\\
 \lesssim
 &
 \|u\|_{L^{\infty}}\| \nabla \rho\|_{H^3} +\|u\|_{H^3}\|\nabla \rho\|_{L^\infty}+\|S^{12}\|_{H^3}\\
 \lesssim
 &
 \|u\|_{H^3}\|\nabla \rho\|_{H^3} + \|\rho\|_{H^3} \|{\rm div}  u \|_{H^3} .
\end{split}
\end{equation*}
By using the H$\ddot{o}$lder inequality and Lemma \ref{Lemma 5.4}, we have
\begin{equation*}
\begin{split}
 \|S^1\|_{L^\frac{6}{5}}
 \leq
 &
 \|S^{11}\|_{L^\frac{6}{5}}+\|S^{12}\|_{L^\frac{6}{5}}\\
 \lesssim
 &
 \|u\|_{L^2}\| \nabla \rho\|_{L^3} +\|\rho\|_{L^2}\|\nabla u\|_{L^3}\\
 \lesssim
 &
 \|u\|_{L^2}\|\nabla \rho\|_{H^1} + \|\rho\|_{L^2} \|\nabla u \|_{H^1}.
\end{split}
\end{equation*}
To handle the term of $\|S^2\|_{H^3}$, by using \eqref{3.65} and Lemmas \ref{Lemma 5.4}-\ref{Lemma 5.6}, we have the following inequalities.
\begin{equation*}
\begin{split}
 \|S^2\|_{H^3}
  \lesssim
   &
    \| u \|_{L^{\infty}} \|\nabla  u \|_{H^3}
    +
    \| u \|_{H^3} \|\nabla  u \|_{L^{\infty}}
    +
    \|g(\rho)\|_{L^\infty}
    \|\nabla \rho\|_{H^3}\\
   &
    +
    \|g(\rho)\|_{H^3}
    \|\nabla \rho\|_{L^\infty}
    +
    \|h(\rho)\|_{L^{\infty}}
    \Big(
    \|\theta\|_{L^\infty} \|\nabla \rho\|_{H^3}
    +
    \|\theta\|_{H^3} \|\nabla \rho\|_{L^\infty}
    \Big)\\
   &
    +
    \|h(\rho)\|_{H^3}
    \|\theta\|_{L^\infty}\|\nabla \rho\|_{L^\infty}
    +
    \|g(\rho) \|_{L^{\infty}}\|\nabla^2  u \|_{H^3}\\
   &
    +
    \|g(\rho)\|_{H^3} \|\nabla^2 u\|_{L^{\infty}}
    +
    \|g(\rho)\|_{L^{\infty}} \|\nabla j_0\|_{H^3}
    +
    \|g(\rho)\|_{H^3} \|\nabla j_0\|_{L^{\infty}}
    \\
 \lesssim
   &
    \|u\|_{H^3}\|\nabla u\|_{H^3}
    +
    \|\theta\|_{H^3}\|\nabla\rho\|_{H^3}\\
   &
    +
    \|\rho\|_{H^3}
    \left(
    \|\nabla \rho\|_{H^3}
    +
    \|\nabla  u \|_{H^4}
    +
    \|\nabla \theta\|_{H^3}
    +
    \|\nabla j_0\|_{H^3}
    \right).
\end{split}
\end{equation*}
Similarly,
\begin{equation*}
\begin{split}
 \|S^3\|_{H^3}
 +
  \|S^4\|_{H^3}
 \lesssim
   &
    \|\theta\|_{H^3}\|\nabla u\|_{H^3}
    +
    \|\rho\|_{H^3}\|\nabla^2\theta\|_{H^3}
    +
    \|\nabla\theta\|_{H^1}\|\theta\|_{H^3}\\
   &
    +
    \|\nabla\rho\|_{H^1}
    \left(
    \|\theta\|_{H^3}
    +
    \|j_0\|_{H^3}
    \right)\\
   &
    +
    \left(
    \|\nabla\theta\|_{H^1}
    +
    \|\nabla j_0\|_{H^1}
    \right)
    \|\nabla\rho\|_{H^3}
    +
    \|\nabla  u \|_{H^3}^2\\
   &
    +
    \|u\|_{H^3}
    \left(
    \|\nabla \theta\|_{H^3}
    +
    \|\nabla j_0\|_{H^3}
    \right),
\end{split}
\end{equation*}
and
\begin{equation*}
\begin{split}
\|(S^2,S^3,S^4)\|_{L^\frac{6}{5}}
\lesssim
\|(\rho,u,\theta)\|_{L^2}
\Big(
\|\theta\|_{H^1}
+
\|\nabla(\rho,u,\theta,j_0)\|_{H^1}
+
\|\nabla^2(u,\theta)\|_{H^1}
\Big).
\end{split}
\end{equation*}
Moreover, by using Lemmas \ref{Lemma 5.4} and \ref{Lemma 5.5}, one has
\begin{equation*}
 \|(\nabla u)^T \cdot\nabla\rho\|_{H^3}
 \lesssim
 \|\nabla \rho\|_{L^{\infty}} \|\nabla  u \|_{H^3}
 +
 \|\nabla \rho\|_{H^3} \|\nabla  u \|_{L^{\infty}}
 \lesssim
 \|\nabla \rho\|_{H^3}
 \|\nabla  u \|_{H^3}.
\end{equation*}
And it follows from Lemma \ref{Lemma 5.1} that
\begin{equation*}
\begin{split}
\int_0^t
\|\nabla^2 \rho\|_{L^\infty}^2
\|u^S\|_{H^3}^2
{\rm d}\tau
\leq
\sup\limits_{0\leq \tau\leq t}
\|u(\tau)\|_{H^3}
\int_0^t
\|\nabla^2\rho(\tau)\|_{H^2}^2
{\rm d}\tau,
\end{split}
\end{equation*}
and
\begin{equation*}
\begin{split}
\int_0^t
\|\nabla u(\tau)\|_{L^\infty}
\|\nabla \rho^S(\tau)\|_{H^3}^2
{\rm d}\tau
\leq
\sup\limits_{0\leq \tau\leq t}
\|\nabla u(\tau)\|_{H^2}
\int_0^t
\|\nabla\rho(\tau)\|_{H^3}^2
{\rm d}\tau.
\end{split}
\end{equation*}
By using Lemmas \ref{Lemma 5.3}-\ref{Lemma 5.4} and the Young inequality for series convolution, we get
\begin{equation}\label{3.68}
\begin{split}
&
\int_0^t
\|\nabla u\|_{L^\infty}^2
\sum\limits_{k>k_1}
 (1+2^{6k})
 \left(
 \sum\limits_{l\geq k-1}
 2^{k-l}
 \|\nabla \rho_l\|_{L^2}
 \right)^2
 {\rm d}\tau\\
\lesssim
&
\sup\limits_{0\leq \tau\leq t}
 \|\nabla u(\tau)\|_{H^2}
 \int_0^t
  \sum\limits_{k>k_1}
 (1+2^{6k})
  \left(
 \sum\limits_{l\geq k-1}
 2^{k-l}
 \|\nabla \rho_l\|_{L^2}
 \right)^2
 {\rm d}\tau\\
\lesssim
&
\sup\limits_{0\leq \tau\leq t}
 \|\nabla u(\tau)\|_{H^2}^2
 \int_0^t
  \sum\limits_{k>k_1}
  \left(
 \sum\limits_{l\geq k-1}
 2^{k-l}
 \|\nabla \rho_l\|_{L^2}
 \right)^2
 {\rm d}\tau\\
 &
 +
\sup\limits_{0\leq \tau\leq t}
 \|\nabla u(\tau)\|_{H^2}^2
 \int_0^t
  \sum\limits_{k>k_1}
  \left(
 \sum\limits_{l\geq k-1}
 2^{4(k-l)}2^{3l}
 \|\nabla \rho_l\|_{L^2}
 \right)^2
 {\rm d}\tau\\
\lesssim
&
\sup\limits_{0\leq \tau\leq t}
 \|\nabla u(\tau)\|_{H^2}^2
 \int_0^t
  \sum\limits_{k\in \mathbb Z}
 \|\nabla \rho_k\|_{L^2}^2
 {\rm d}\tau\\
 &
 +
\sup\limits_{0\leq \tau\leq t}
 \|\nabla u(\tau)\|_{H^2}^2
 \int_0^t
  \sum\limits_{k\in \mathbb{Z}}
  2^{6k}
 \|\nabla \rho_k\|_{L^2}^2
 {\rm d}\tau\\
\lesssim
&
\sup\limits_{0\leq \tau\leq t}
 \|\nabla u(\tau)\|_{H^2}^2
 \int_0^t
 \|\nabla \rho\|_{H^3}^2
 {\rm d}\tau.
 \end{split}
\end{equation}
We end up all of the estimates with
\begin{equation*}
 N(t) \leq C\left( N(0) +N^\frac{3}{2}(t)+ N^2(t)\right).
\end{equation*}
This allows to close the estimates globally provided $N(0)$ is small enough. Then, we complete the proof of Proposition \ref{Proposition 2.2}. The existence and uniqueness of solutions are a direct consequence of Proposition \ref{Proposition 2.1} and Proposition \ref{Proposition 2.2} by the standard continuity argument.
$\hfill{\square}$

\section{Decay rates}

\begin{proposition}[Large time behavior]\label{Proposition 2.3}
Under the assumptions of Proposition \ref{Proposition 2.2}, if the initial data satisfies an additional condition that $\|(\rho^0, u^0,\theta^0,j_0^0)\|_{L^1(\mathbb R^3)}<+\infty$, there is a constant $ \tilde C_1>0$ such that for any $t\in [0,T]$, the global solution $(\rho, u,\theta,j_0)(x,t)$ achieved above enjoys the following decay properties.
\begin{equation*}
\begin{split}
\|\nabla^k(\rho, u, \theta,j_0)(t)\|_{L^2(\mathbb{R}^3)}
 \leq
 &
   \tilde C_1(1+t)^{-\frac{3}{4}-\frac{k}{2}},\ \ \ \ for\ k=0,1,2,\\[2mm]
\|\nabla^k(\rho, u, \theta,j_0)(t)\|_{L^2(\mathbb{R}^3)}
 \leq
 &
   \tilde C_1(1+t)^{-\frac{7}{4}},\ \ \ \ \ \ \ for\ k=3,4,\\[2mm]
\|\partial_t(\rho, u)(t)\|_{L^2(\mathbb{R}^3)}
 \leq
 &
   \tilde C_1(1+t)^{-\frac{5}{4}},\\[2mm]
\|\partial_t(\theta,j_0)(t)\|_{L^2(\mathbb{R}^3)}
 \leq
 &
   \tilde C_1(1+t)^{-\frac{3}{4}},\\[2mm]
\|\nabla^k (b^\prime(1)\theta-j_0)\|_{L^2(\mathbb{R}^3)}
 \leq
 &
   \tilde C_1(1+t)^{-\frac{3}{4}-\frac{k+1}{2}},\ \  for\ k=0,1,2.
\end{split}
\end{equation*}
\end{proposition}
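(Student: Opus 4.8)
Write $U:=(\rho,u,\theta,j_0)$ and $U_0:=(\rho^0,u^0,\theta^0,j_0^0)$. The plan is to combine the frequency-localized estimates of Section~3 with the asymptotic expansion of the eigenvalues of $A(\xi)$ near $\xi=0$ through a time-weighted bootstrap. By Duhamel's formula applied to \eqref{3.31} and \eqref{3.52}, each dyadic block of $U$ splits into a linear part governed by the data and a nonlinear part governed by $(S^1,S^2,S^3,S^4)$. In the low-frequency regime $2^{k}\le r_0$, the eigenvalue expansions established above — three modes behaving like ${\rm e}^{-c_0|\xi|^2t}$ and one genuinely damped mode — together with \eqref{3.45}--\eqref{3.46}, give for the linear evolution $|\widehat{U}^{L}(t,\xi)|\lesssim {\rm e}^{-c_0|\xi|^2t}|\widehat{U}^{L}(0,\xi)|$, with an extra factor $|\xi|$ on the $\widehat{\Xi}$-component; since $\|\widehat{U}^{L}(0,\cdot)\|_{L^\infty}\lesssim\|U_0\|_{L^1}$, Plancherel's theorem would yield $\|\nabla^k({\rm e}^{-tA}U_0)^{L}\|_{L^2}\lesssim(1+t)^{-\frac34-\frac k2}\|U_0\|_{L^1}$ for $k=0,1,2$, plus the additional half power of decay for $b^\prime(1)\theta-j_0$. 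In the medium regime $r_0\le2^{k}\le R_0$, Lemma~\ref{Lemma 3.1} gives $|{\rm e}^{-tA(\xi)}|\lesssim{\rm e}^{-\iota t}$; in the high regime $2^{k}>R_0$, the differential inequality \eqref{3.27} for $\mathcal{H}_{h,k}$ has the form $\frac{\rm d}{{\rm d}t}\mathcal{H}_{h,k}+c\,\mathcal{H}_{h,k}\lesssim(\text{nonlinear})_k$, since there the dissipation dominates the energy, so the linear high-frequency part decays like ${\rm e}^{-ct}$ from $\|U_0\|_{H^4}$. Summing the three regimes with $2^{2ks}$-weights (Propositions~\ref{Proposition 3.3}, \ref{Proposition 3.4}, \ref{Proposition 4.2}), the linear contribution to $\|\nabla^kU(t)\|_{L^2}$ is $\lesssim(1+t)^{-\frac34-\frac k2}\big(\|U_0\|_{L^1}+\|U_0\|_{H^4}\big)$ for $k\le2$.

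\textbf{Bootstrap.} I would introduce
\begin{equation*}
\begin{split}
\mathcal{N}(t):=\sup_{0\le\tau\le t}\Big[&\sum_{k=0}^{2}(1+\tau)^{\frac34+\frac k2}\|\nabla^kU(\tau)\|_{L^2}\\
&+\sum_{k=0}^{2}(1+\tau)^{\frac54+\frac k2}\|\nabla^k(b^\prime(1)\theta-j_0)(\tau)\|_{L^2}\Big].
\end{split}
\end{equation*}
By Duhamel, $\|\nabla^kU(t)\|_{L^2}$ is bounded by the linear term plus $\int_0^tK_k(t-\tau)\|(S^1,S^2,S^3,S^4)(\tau)\|_{L^1\cap\dot H^{k-1}}\,{\rm d}\tau$, where $K_k$ is the decay kernel above. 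Since each $S^i$ is, by its explicit form and \eqref{3.65}, a finite sum of products of components of $U$ and its derivatives with smooth bounded coefficients, Hölder's inequality gives $\|(S^1,\dots,S^4)(\tau)\|_{L^1}\lesssim\|U(\tau)\|_{L^2}\|\nabla U(\tau)\|_{L^2}+\cdots\lesssim\mathcal{N}(t)^2(1+\tau)^{-2}$, while the product and commutator estimates of the Appendix together with the uniform bound $\delta$ of Proposition~\ref{Proposition 2.2} and the decay stored in $\mathcal{N}(t)$ give $\|(S^1,\dots,S^4)(\tau)\|_{\dot H^{k-1}}\lesssim(\delta+\mathcal{N}(t))\mathcal{N}(t)(1+\tau)^{-\alpha_k}$ with $\alpha_k$ large. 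The convolution bounds $\int_0^t(1+t-\tau)^{-\frac34-\frac k2}(1+\tau)^{-2}\,{\rm d}\tau\lesssim(1+t)^{-\frac34-\frac k2}$ and their analogues for the damped component then close the estimate as
\begin{equation*}
\mathcal{N}(t)\lesssim\|U_0\|_{L^1}+\|U_0\|_{H^4}+\big(\delta+\mathcal{N}(t)\big)\mathcal{N}(t),
\end{equation*}
so that for $\varepsilon_0$ (hence $\delta$) small one gets $\mathcal{N}(t)\lesssim\|U_0\|_{L^1}+\|U_0\|_{H^4}$ uniformly in $t$, which is the $k=0,1,2$ rates and the rate for $b^\prime(1)\theta-j_0$.

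\textbf{Top derivatives and time derivatives.} For $k=3,4$ the linear low-frequency part already decays faster than $(1+t)^{-\frac74}$, so it suffices to control the high-frequency part: running the high-frequency energy estimate of Section~3.1 in differential, time-weighted form (weight $(1+t)^{7/2}$) and using the Fourier interpolation $\|\nabla^3f\|_{L^2}^2\le\|\nabla^2f\|_{L^2}\|\nabla^4f\|_{L^2}$ to absorb the lower-order surplus, together with the already-established decay of $\|\nabla^2U\|_{L^2}$ and the uniform $H^4$ bound, would give the (non-optimal) rate $(1+t)^{-\frac74}$. The time-derivative bounds are then read off \eqref{3.1}: $\partial_t\rho=-{\rm div}\,u+S^1$ and $\partial_tu=-\nabla(\rho+\theta)-\frac1{3\mathcal{C}}\nabla j_0+{\rm div}\,\mathbb{T}+S^2$ give $\|\partial_t(\rho,u)\|_{L^2}\lesssim\|\nabla U\|_{H^1}+\|(S^1,S^2)\|_{L^2}\lesssim(1+t)^{-\frac54}$, and the third and fourth equations of \eqref{3.1} similarly give $\|\partial_t(\theta,j_0)\|_{L^2}\lesssim(1+t)^{-\frac34}$.

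\textbf{Main obstacle.} The hard part will be the nonlinear step: one must simultaneously produce enough spatial $L^1$ integrability of the quadratic sources to trigger the $L^1$--$L^2$ linear decay, and check that the time decay they carry — constrained by the slowest mode, namely the purely dissipative combination $\widehat{\Theta}$ which decays only at the heat-kernel rate — is fast enough for all the time convolutions to reproduce the claimed rates, while keeping the low/medium/high-frequency bookkeeping consistent. Extracting the extra half power of decay for $b^\prime(1)\theta-j_0$ will require exploiting the precise damped structure of the $\widehat{\Xi}$-equation in \eqref{3.34} — in particular that $\widehat{\Xi}$ is driven only by terms carrying an extra factor $|\xi|$ in the low-frequency regime — rather than treating $(\theta,j_0)$ as a single block.
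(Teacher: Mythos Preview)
Your overall strategy---frequency decomposition into low/medium/high regimes, linear semigroup decay via the eigenvalue analysis, Duhamel for the nonlinear problem, and a time-weighted bootstrap---is exactly what the paper does, and your identification of the main nonlinear obstacle is on target. Two points where the paper takes a shorter path than you propose, though:

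\textbf{The $b'(1)\theta-j_0$ rate.} You fold $\Xi:=\gamma\theta-bj_0$ into the bootstrap functional $\mathcal{N}(t)$ and plan to extract the extra half-power from the low-frequency structure of \eqref{3.34}. The paper instead runs the bootstrap \emph{only} on $M(t):=\sup_{\tau}\sum_{m=0}^2(1+\tau)^{\frac34+\frac m2}\|\nabla^mU(\tau)\|_{L^2}$, and then obtains the $\Xi$-decay \emph{a posteriori}: from $\eqref{3.1}_3,\eqref{3.1}_4$ it derives the damped equation
\[
\partial_t\Xi+\Big(\tfrac{2}{3}+\mathcal{C}\Big)\Xi=-\tfrac{2\gamma}{3}{\rm div}\,u+\tfrac{2\kappa\gamma}{3}\Delta\theta-ab\Delta j_0+\gamma S^3+bS^4,
\]
takes $L^2$-energy, and uses Gronwall with the already-known decay of $\nabla u$, $\nabla^2(\theta,j_0)$ on the right-hand side. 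This avoids any frequency bookkeeping for $\Xi$ and decouples the two arguments entirely.

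\textbf{The $k=3,4$ rates.} Rather than a separate time-weighted energy estimate with interpolation as you sketch, the paper simply reuses its long-wave estimate (Proposition~\ref{Proposition 4.2}, which for $m\ge3$ already saturates at $(1+t)^{-7/4}$) and its short-wave estimate (Proposition~\ref{Proposition 4.1}, which says the $\dot B^3_{2,2}\cap\dot B^4_{2,2}$ short-wave norm decays exponentially modulo long-wave contributions), and reads off $(1+t)^{-7/4}$ by addition. Your route would also work, but requires re-doing the high-frequency energy argument in weighted form.

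One small correction: your claim $\|(S^1,\dots,S^4)(\tau)\|_{L^1}\lesssim\mathcal{N}(t)^2(1+\tau)^{-2}$ is too optimistic---the term $b(\theta+1)-b(1)-b'(1)\theta\sim\theta^2$ in $S^4$ gives only $\|\theta\|_{L^2}^2\lesssim(1+\tau)^{-3/2}$. The paper handles this by writing $\|(\rho,\theta,j_0)\|_{L^2}^2\le\delta^{1/4}M(t)^{7/4}(1+\tau)^{-21/16}$, which is still integrable and closes to $M(t)\lesssim\|U_0\|_{L^1\cap H^4}+\delta^{1/4}M(t)^{7/4}$; your version of the inequality would need the same adjustment.
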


\subsection{Energy estimates of the short wave part}

The estimate of the solution in the short wave part is stated as the following proposition.
\begin{proposition}\label{Proposition 4.1}
The following inequality holds true
\begin{equation}\label{4.1}
\begin{split}
&
\|(\rho ,  u ,\theta , j_0)^S(t)\|_{\dot B_{2,2}^3}^2
+
\|(\rho ,  u ,\theta , j_0)^S(t)\|_{\dot B_{2,2}^4}^2\\
\lesssim
&
{\rm e}^{-C_6 t}
\Big(
\|(\rho ,  u ,\theta , j_0)^S(0)\|_{\dot B_{2,2}^3}^2
+
\|(\rho ,  u ,\theta , j_0)^S(0)\|_{\dot B_{2,2}^4}^2
\Big)\\
   &
   +
   \delta
   \int_0^t{\rm e}^{-C_6(t-\tau)}
    \|(\rho , u , \theta, j_0)^L (\tau)\|_{\dot B_{2,2}^3}^2
    {\rm d}\tau\\
   &
   +
   \delta
   \int_0^t{\rm e}^{-C_6(t-\tau)}
   \Big(
    \|(\rho , u, \theta, j_0 )^L(\tau)\|_{\dot B_{2,2}^4}^2
    +
    \|(u,\theta)^L (\tau)\|_{\dot B_{2,2}^5}^2
   \Big)
    {\rm d}\tau,
\end{split}
\end{equation}
where the positive constant $C_6$ is independent of $\delta$.
\end{proposition}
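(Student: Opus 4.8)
The plan is to upgrade the high-frequency block estimate \eqref{3.27} to an exponentially decaying bound via Gronwall's inequality, then to carry out a weighted $\ell^2$-summation over the dyadic blocks with $k>k_1$, and finally to use the smallness of $\delta$ to absorb the short-wave part of the nonlinear source into the parabolic dissipation produced by the summation. The low-frequency norms that survive on the right-hand side are exactly those displayed in \eqref{4.1}; they will be estimated separately by the long-wave analysis in the sequel.

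First I would note that, by the equivalence \eqref{3.25}, $\mathcal{H}_{h,k}(t)\sim(1+2^{2k})\|(\rho_k,u_k,\theta_k,j_{0,k})(t)\|_{L^2}^2$, so for $k>k_1$ the dissipation term $2^{2k}\|(\rho_k,u_k,\theta_k,j_{0,k})\|_{L^2}^2$ appearing in \eqref{3.27} bounds $\mathcal{H}_{h,k}(t)$ from below up to a fixed constant. Retaining also a portion of the higher-order dissipation, \eqref{3.27} yields, for every $k>k_1$,
\[
\frac{\rm d}{{\rm d}t}\mathcal{H}_{h,k}(t)+C_6\,\mathcal{H}_{h,k}(t)+C\Big(2^{2k}\|\rho_k\|_{L^2}^2+2^{4k}\|(u_k,\theta_k,j_{0,k})\|_{L^2}^2\Big)\lesssim{\rm RHS}_k(t),
\]
where ${\rm RHS}_k(t)$ denotes the right-hand side of \eqref{3.27} and $C_6>0$ depends only on $k_1$ and the physical coefficients, hence not on $\delta$. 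Multiplying by ${\rm e}^{C_6 t}$ and integrating over $[0,t]$ gives, for each $k>k_1$,
\[
\mathcal{H}_{h,k}(t)+\int_0^t{\rm e}^{-C_6(t-\tau)}\Big(2^{2k}\|\rho_k\|_{L^2}^2+2^{4k}\|(u_k,\theta_k,j_{0,k})\|_{L^2}^2\Big)(\tau)\,{\rm d}\tau\lesssim{\rm e}^{-C_6 t}\mathcal{H}_{h,k}(0)+\int_0^t{\rm e}^{-C_6(t-\tau)}{\rm RHS}_k(\tau)\,{\rm d}\tau.
\]

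Next I would multiply this inequality by $2^{2k(s-1)}$ with $s=3$ and $s=4$ and sum over $k>k_1$. By the weighted $\ell^2$-characterisation \eqref{5.1} together with \eqref{3.25}, the left-hand side controls $\|(\rho,u,\theta,j_0)^S(t)\|_{\dot B_{2,2}^{3}}^2+\|(\rho,u,\theta,j_0)^S(t)\|_{\dot B_{2,2}^{4}}^2$ together with the dissipation integrals $\int_0^t{\rm e}^{-C_6(t-\tau)}\big(\|\rho^S\|_{\dot B_{2,2}^{4}}^2+\|(u,\theta,j_0)^S\|_{\dot B_{2,2}^{5}}^2+\cdots\big)(\tau)\,{\rm d}\tau$, the initial term becomes ${\rm e}^{-C_6 t}\big(\|(\rho,u,\theta,j_0)^S(0)\|_{\dot B_{2,2}^{3}}^2+\|(\rho,u,\theta,j_0)^S(0)\|_{\dot B_{2,2}^{4}}^2\big)$, and the summed nonlinear term is handled by the Besov product and composition estimates (Lemmas \ref{Lemma 5.3}--\ref{Lemma 5.6}) and the Young inequality for series convolution, the commutator/convolution contribution being treated exactly as in \eqref{3.68}. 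Since every nonlinear term in $S^1,\dots,S^4,\nabla S^{12},(\nabla u)^T\!\cdot\nabla\rho$ carries a factor estimated in $L^\infty$ by $\|(\rho,u,\theta,j_0)\|_{H^4}\lesssim\delta$ (using \eqref{3.65} for $g,h$ and the second-order Taylor remainder for $b$), the outcome is a bound of the form $\delta\int_0^t{\rm e}^{-C_6(t-\tau)}\big(\|(\rho,u,\theta,j_0)\|_{\dot B_{2,2}^{3}}^2+\|(\rho,u,\theta,j_0)\|_{\dot B_{2,2}^{4}}^2+\|(u,\theta)\|_{\dot B_{2,2}^{5}}^2\big)(\tau)\,{\rm d}\tau$.

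Finally I would write $\|\cdot\|_{\dot B_{2,2}^{s}}^2\lesssim\|\cdot^L\|_{\dot B_{2,2}^{s}}^2+\|\cdot^S\|_{\dot B_{2,2}^{s}}^2$: the low-frequency pieces reproduce precisely the terms $\|(\rho,u,\theta,j_0)^L\|_{\dot B_{2,2}^{3}}^2$, $\|(\rho,u,\theta,j_0)^L\|_{\dot B_{2,2}^{4}}^2$ and $\|(u,\theta)^L\|_{\dot B_{2,2}^{5}}^2$ permitted in \eqref{4.1}, while the short-wave pieces are dominated (for $k>k_1$) by the dissipation norms $\|\rho^S\|_{\dot B_{2,2}^{4}}^2$ and $\|(u,\theta,j_0)^S\|_{\dot B_{2,2}^{5}}^2$ kept on the left, so that they are absorbed once $\delta$ is chosen small enough; dropping the remaining nonnegative dissipation integrals then yields \eqref{4.1}. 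I expect the main obstacle to be exactly this last bookkeeping: one must check that the weighted summation converts the $2^{2k}$-dissipation of $\rho$ into the level $\dot B_{2,2}^{4}$ and the $2^{4k}$-dissipation of $(u,\theta,j_0)$ into the level $\dot B_{2,2}^{5}$, and that under each nonlinearity the density $\rho$ never appears with two spatial derivatives (so that $\|\rho^S\|_{\dot B_{2,2}^{5}}$, which is unavailable with only $H^4$ regularity, is never needed) while $u$ and $\theta$ appear with at most two derivatives — otherwise the gain from the smallness of $\delta$ would not match the available dissipation and the argument would not close.
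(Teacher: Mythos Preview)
Your proposal is correct and follows essentially the same route as the paper: derive a differential inequality from \eqref{3.27}, sum with weight $2^{6k}$ (the $s=4$ weight alone already captures both $\dot B^3_{2,2}$ and $\dot B^4_{2,2}$ because $\mathcal{H}_{h,k}\sim(1+2^{2k})\|\cdot_k\|_{L^2}^2$), estimate the nonlinear sources by $\delta\big(\|(\rho,u,\theta,j_0)\|_{\dot B^3_{2,2}}^2+\|(\rho,u,\theta,j_0)\|_{\dot B^4_{2,2}}^2+\|(u,\theta)\|_{\dot B^5_{2,2}}^2\big)$, split into long- and short-wave, absorb the short-wave part by smallness of $\delta$, and integrate against the Gronwall factor. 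The only cosmetic difference is that you apply Gronwall at the block level and then sum, whereas the paper sums first to the functional $\sum_{k>k_1}2^{6k}\mathcal{H}_{h,k}$ and then applies Gronwall; since the decay constant $C_6$ is uniform in $k>k_1$, both orderings are equivalent.
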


\begin{proof}
Multiplying \eqref{3.27} by $2^{6k}$, we get for $k>k_1>0$
\begin{equation}\label{4.2}
\begin{split}
   &
   \frac{\rm d}{{\rm d}t}
   2^{6k}\mathcal{H}_{h,k}(t)
    +
    C_32^{6k}
    \|(\theta_k,j_{0,k})(t)\|_{L^2}^2
    +
    \frac{C_3}{2}2^{2k_1}2^{6k}
    \|(\rho_k,u_k)(t)\|_{L^2}^2\\
 &
   +
   \frac{C_3}{2}
   2^{8k}
    \|(\rho_k,u_k)(t)\|_{L^2}^2
    +
    C_3
   2^{8k}
    \|(\theta_k,j_{0,k})(t)\|_{L^2}^2
   +
     C_3
     2^{10k}
    \|(u_k,\theta_k,j_{0,k})(t)\|_{L^2}^2\\[1mm]
\lesssim
   &
   2^{6k}\|( S^1_k,S^2_k,S^3_k,S^4_k,\nabla S^{12}_k)(t)\|_{L^2}^2
   +
   2^{6k}\|((\nabla u)^T \cdot\nabla\rho)_k(t)\|_{L^2}^2
      +
 2^{6k}
\|\nabla u\|_{L^\infty}
\|\nabla \rho_k\|_{L^2}^2\\[2mm]
   &
+
2^{6k}
\|\nabla ^2\rho\|_{L^\infty}^2
\|u_k\|_{L^2}^2
+
2^{6k}
\|\nabla u\|_{L^\infty}^2
\left(
 \sum\limits_{l\geq k-1}
 2^{k-l}
 \|\nabla \rho_l\|_{L^2}\right)^2.
\end{split}
\end{equation}
The $l^2$ summation over $k$ for from $k=k_1+1$ to $\infty$ in \eqref{4.2} yields the following inequality
\begin{equation}\label{4.3}
\begin{split}
   &
   \frac{\rm d}{{\rm d}t}
   \sum\limits_{k> k_1}
    2^{6k}\mathcal{H}_{h,k}(t)
    +
    \|(\rho,u,\theta,j_0)^S(t)\|_{\dot B_{2,2}^3}^2\\
   &
   +
   \|(\rho,u,\theta,j_0)^S(t)\|_{\dot B_{2,2}^4}^2
   +
   \|(u ,\theta , j_0)^S(t)\|_{\dot B_{2,2}^5}^2\\[1mm]
\lesssim
   &
   \|(S^1,S^2,S^3,S^4,\nabla S^{12})^S(t)\|_{\dot B_{2,2}^3}^2
   +
   \|((\nabla u)^T \cdot\nabla\rho)^S(t)\|_{\dot B_{2,2}^3}^2
      +
\|\nabla u\|_{L^\infty}
\|\nabla \rho^S\|_{\dot B_{2,2}^3}^2\\[2mm]
   &
+
\|\nabla ^2\rho\|_{L^\infty}^2
\|u^S\|_{\dot B_{2,2}^3}^2
+
\|\nabla u\|_{L^\infty}^2
\sum\limits_{k>k_1}
2^{6k}
\left(
 \sum\limits_{l\geq k-1}
 2^{k-l}
 \|\nabla \rho_l\|_{L^2}\right)^2.
\end{split}
\end{equation}
By Lemma \ref{Lemma 5.1}, Lemmas \ref{Lemma 5.4} and \ref{Lemma 5.5} and the assumption \eqref{2.3}, we have
\begin{equation}\label{4.4}
\begin{split}
  \|(\nabla S^{12})^S(t)\|_{\dot B_{2,2}^3}
  \lesssim
  &
  \|\nabla^4 ( \rho {\rm div}  u )\|_{L^2}\\
\lesssim
  &
  \| \rho  \|_{L^{\infty}}\|\nabla^4{\rm div}  u \|_{L^2}
  +
  \|\nabla^4 \rho \|_{L^2}\|{\rm div}  u \|_{L^{\infty}} \\
\lesssim
  &
  \delta\big(\|\rho \|_{\dot B_{2,2}^4}
             +
             \|u \|_{\dot B_{2,2}^5}\big),
\end{split}
\end{equation}
\begin{equation}\label{4.5}
\begin{split}
  \|(S^1)^S(t)\|_{\dot B_{2,2}^3}
\lesssim
&
  \|\nabla^3( u \cdot \nabla \rho )\|_{L^2}
  +
  \|\nabla^3 S^{12}\|_{L^2}\\
\lesssim
  &
  \|u\|_{L^{\infty}}\|\nabla^4 \rho \|_{L^2}
  +
  \|\nabla^3  u \|_{L^2}\|\nabla \rho \|_{L^\infty}+\|\nabla^3 S^{12}\|_{L^2}\\
\lesssim
  &
   \delta
   \big(
   \|(\rho , u )\|_{\dot B_{2,2}^3}
   +
   \|(\rho , u )\|_{\dot B_{2,2}^4}
   \big).
\end{split}
\end{equation}
To handle the term $\|(S^2)^S\|_{\dot B_{2,2}^3}$, by Lemma \ref{Lemma 5.1}, Lemmas \ref{Lemma 5.4}-\ref{Lemma 5.6} and the assumption \eqref{2.3}, the following inequality holds true.
\begin{eqnarray}\label{4.6}
\begin{split}
  \|(S^2)^S(t)\|_{\dot B_{2,2}^3}
\lesssim
  &
  \| u \|_{L^{\infty}} \|\nabla^4  u \|_{L^2}
  +
  \|\nabla^3  u \|_{L^2} \|\nabla  u \|_{L^{\infty}}\\
  &
  +
  \|g(\rho)\|_{L^{\infty}} \|\nabla^4 \rho \|_{L^2}
  +
  \|\nabla^3 g(\rho)\|_{L^2}\|\nabla \rho\|_{L^\infty}\\
  &
  +
  \|h(\rho)\|_{L^\infty}
  \Big(
  \|\theta\|_{L^\infty}\|\nabla^4 \rho\|_{L^2}
  +
  \|\nabla^3\theta\|_{L^2}\|\nabla \rho\|_{L^\infty}
  \Big)\\
  &
  +
  \|\nabla^3 h(\rho)\|_{L^2}
  \|\theta\|_{L^\infty}\|\nabla \rho\|_{L^\infty}
  \\
  &
  +
  \|g(\rho)\|_{L^{\infty}} \|\nabla^5 u \|_{L^2}
  +
  \|\nabla^3 g(\rho)\|_{L^2}\|\nabla^2 u\|_{L^\infty}\\
  &
  +
  \|g(\rho)\|_{L^{\infty}} \|\nabla^4 j_0 \|_{L^2}
  +
  \|\nabla^3 g(\rho)\|_{L^2}\|\nabla j_0\|_{L^\infty}\\
\lesssim
  &
  \delta\big(
  \|(\rho,u,\theta)\|_{\dot B_{2,2}^3}
  +
  \|(\rho,u,j_0)\|_{\dot B_{2,2}^4}
  +
  \|u\|_{\dot B_{2,2}^5}
  \big).
\end{split}
\end{eqnarray}
Similarly,
\begin{equation}
\begin{split}
  \|(S^3)^S(t)\|_{\dot B_{2,2}^3}
   \lesssim
  &
   \delta
   \big(
   \|(\rho,\theta,j_0)\|_{\dot B_{2,2}^3}
   +
   \|(u,\theta,j_0)\|_{\dot B_{2,2}^4}
   +
   \|\theta\|_{\dot B_{2,2}^5}
   \big).
\end{split}
\end{equation}
Moreover, one has
\begin{equation}\label{4.8}
\begin{split}
  \|((\nabla u)^T \cdot\nabla\rho)^S(t)\|_{\dot B_{2,2}^3}
\lesssim
  &
  \|\nabla \rho \|_{L^{\infty}} \|\nabla^4  u \|_{L^2}
  +
  \|\nabla^4 \rho \|_{L^2} \|\nabla  u \|_{L^{\infty}}\\
\lesssim
  &
  \delta
  \|(\rho,u) \|_{\dot B_{2,2}^4},
\end{split}
\end{equation}
\begin{equation}
\|\nabla u \|_{L^{\infty}}\|(\nabla\rho)^S(t)\|_{\dot B_{2,2}^3}^2
 \lesssim
  \delta
   \|\rho\|_{\dot B_{2,2}^4}^2,
\end{equation}
and
\begin{equation*}
\|\nabla^2\rho\|_{L^\infty}^2
\|u^S\|_{\dot B^3_{2,2}}^2
\leq
\|\nabla^2\rho\|_{H^2}^2
\|u\|_{\dot B^3_{2,2}}^2
\leq
\delta^2
\|u\|_{\dot B^3_{2,2}}^2.
\end{equation*}
Similar to the estimate \eqref{3.68}, by using Lemmas \ref{Lemma 5.3}-\ref{Lemma 5.4}, the Young inequality for series convolution and the assumption \eqref{2.3}, we get
\begin{equation}\label{4.10}
\|\nabla u\|_{L^\infty}^2
\sum\limits_{k>k_1}
2^{6k}
\left(
 \sum\limits_{l\geq k-1}
 2^{k-l}
 \|\nabla \rho_l\|_{L^2}\right)^2
 \lesssim
 \|\nabla u\|_{H^2}^2
 \|\nabla \rho\|_{\dot B^3_{2,2}}^2
\lesssim
\delta^2
  \|\nabla \rho\|_{\dot B_{2,2}^3}^2.
\end{equation}
Substituting \eqref{4.4}-\eqref{4.10} into \eqref{4.3} yields
\begin{equation}
\begin{split}
   &
   \frac{\rm d}{{\rm d}t}
   \sum\limits_{k>k_1}
    2^{6k}\mathcal{H}_{h,k}(t)
   +
    \|(\rho,u,\theta,j_0)^S(t)\|_{\dot B_{2,2}^3}^2\\
   &
   +
   \|(\rho,u,\theta,j_0)^S(t)\|_{\dot B_{2,2}^4}^2
   +
   \|(u ,\theta , j_0)^S(t)\|_{\dot B_{2,2}^5}^2\\[1mm]
\lesssim
   &
   \delta
   \big(
  \|(\rho , u , \theta, j_0)(t)\|_{\dot B_{2,2}^3}^2
  +
  \|(\rho , u, \theta, j_0 )(t)\|_{\dot B_{2,2}^4}^2
  +
  \|(u,\theta)(t) \|_{\dot B_{2,2}^5}^2
  \big).
\end{split}
\end{equation}
By using the decomposition \eqref{5.3} in Lemma \ref{Lemma 5.2} and choosing the parameter $\delta$ is suitably small, we obtain
\begin{equation}\label{4.12}
\begin{split}
   &
   \frac{\rm d}{{\rm d}t}
   \sum\limits_{k> k_1}
    2^{6k}\mathcal{H}_{h,k}(t)
   +
    \frac{1}{2}
    \|(\rho,u,\theta,j_0)^S(t)\|_{\dot B_{2,2}^3}^2\\
   &
   +
    \frac{1}{2}
   \|(\rho,u,\theta,j_0)^S(t)\|_{\dot B_{2,2}^4}^2
   +
    \frac{1}{2}
   \|(u ,\theta , j_0)^S(t)\|_{\dot B_{2,2}^5}^2\\[1mm]
\lesssim
   &
\delta
   \big(
  \|(\rho , u , \theta, j_0)^L(t)\|_{\dot B_{2,2}^3}^2
  +
  \|(\rho , u, \theta, j_0 )^L(t)\|_{\dot B_{2,2}^4}^2
  +
  \|(u,\theta)^L(t)\|_{\dot B_{2,2}^5}^2
  \big).
\end{split}
\end{equation}
It follows from \eqref{3.25} that
\begin{equation}\label{4.13}
\sum\limits_{k>k_1}
    2^{6k}\mathcal{H}_{h,k}(t)
\sim
\|(\rho ,  u ,\theta , j_0)^S(t)\|_{\dot B_{2,2}^3}^2
+
\|(\rho ,  u ,\theta , j_0)^S(t)\|_{\dot B_{2,2}^4}^2,
\end{equation}
for any $0\leq t\leq T$.
Then, from \eqref{4.12} and \eqref{4.13}, there exists a positive constant $C_6$ independent of $\delta$ such that
\begin{equation}\label{4.14}
\begin{split}
   &
   \frac{\rm d}{{\rm d}t}
   \sum\limits_{k>k_1}
    2^{6k}\mathcal{H}_{h,k}(t)
   +
   C_6
   \sum\limits_{k>k_1}
    2^{6k}\mathcal{H}_{h,k}(t)\\
\lesssim
   &
\delta
  \|(\rho , u , \theta, j_0)^L(t)\|_{\dot B_{2,2}^3}^2
  +
  \|(\rho , u, \theta, j_0 )^L(t)\|_{\dot B_{2,2}^4}^2
  +
  \|(u,\theta)^L(t)\|_{\dot B_{2,2}^5}^2
  \big).
\end{split}
\end{equation}
Multiplying \eqref{4.14} with ${\rm e}^{C_6t}$ and integrating with respect to $t$ over $[0,t]$, we have
\begin{equation} \label{4.15}
\begin{split}
   \sum\limits_{k>k_1}
    2^{6k}\mathcal{H}_{h,k}(t)
   \lesssim
   &
   {\rm e}^{-C_6 t}
   \sum\limits_{k>k_1}
    2^{6k}\mathcal{H}_{h,k}(0)
   +
   \delta
   \int_0^t{\rm e}^{-C_6(t-\tau)}
    \|(\rho , u , \theta, j_0)^L(\tau)\|_{\dot B_{2,2}^3}^2
    {\rm d}\tau\\
   &
   +
   \delta
   \int_0^t{\rm e}^{-C_6(t-\tau)}
   \Big(
  \|(\rho , u, \theta, j_0 )^L(\tau)\|_{\dot B_{2,2}^4}^2
  +
  \|(u,\theta)^L(\tau)\|_{\dot B_{2,2}^5}^2
   \Big)
    {\rm d}\tau.
\end{split}
\end{equation}
Since $\sum\limits_{k>k_1}2^{6k}\mathcal{H}_{h,k}(0)\sim \|(\rho_0,u_0,\theta_0,j_0^0)^S\|_{\dot B_{2,2}^3}+\|(\rho_0,u_0,\theta_0,j_0^0)^S\|_{\dot B_{2,2}^4}$, from \eqref{4.13} and \eqref{4.15}, we get \eqref{4.1}.
\end{proof}

\subsection{Decay rates of the long wave part}
In this subsection, based on the $L^2$-norm decay estimates for Fourier analysis on the
linearized system, we obtain decay estimates of the long wave parts of solutions to \eqref{3.1}.
Let $\mathbb{A}$ be the following matrix of the differential operators of the form
\begin{equation*}
\mathbb{A}
 =
  \left(
           \begin{array}{cccc}
             0 & {\rm div}& 0& 0 \\
              \nabla & -\mu\Delta-(\mu+\lambda)\nabla{\rm div}&\nabla&\frac{1}{3\mathcal{C}}\nabla \\
              0& \frac{2}{3}{\rm div}& -\frac{2}{3}k\Delta+\frac{2\gamma}{3}& -\frac{2}{3}b \\
               0& 0& -\mathcal{C}\gamma & -a\Delta + \mathcal{C}b\\
           \end{array}
         \right),
\end{equation*}
and
\begin{equation*}
\overline{\mathbb{U}}_k(t):=(\overline{\rho}_k(t), \overline{u}_k(t),\overline{\theta}_k(t),\overline{j_{0,k}}(t))^T,\ \
{\rm and}\ \
\mathbb{U}_k(0)
 =
  (\rho^0_k, u^0_k,\theta^0_k,j_{0,k}^0)^T.
\end{equation*}
Applying the homogeneous frequency localized operator $\dot\Delta_k$ to \eqref{3.1} yields for all $k\in \mathbb{Z}$
we have the following corresponding linearized problem
\begin{equation}\label{4.16}
\left\{
\begin{array}{lll}
\partial_t\overline{\mathbb{U}}_k+\mathbb{A}\overline{\mathbb{U}}_k=0,\ \ \ \ {\rm for}\ t>0,\\
\overline{\mathbb{U}}_k|_{t=0}=\mathbb U_k(0).
\end{array}
\right.
\end{equation}
Applying the Fourier transform on \eqref{4.16} with respect to the $x$-variable and solving the ordinary
differential equation with respect to $t$, we have
\begin{equation*}
\overline{\mathbb{U}}_k(t)
=
  \mathcal{A}(t) \mathbb{U}_k(0),
\end{equation*}
where $\mathcal{A}(t)={\rm e}^{-t\mathbb{A}}(t\geq 0)$ is the semigroup generated by the linear operator $\mathbb{A}$ and $\mathcal{A}(t)g:=
\mathcal{F}^{-1}({\rm e}^{-t\mathbb{A}_\xi} \hat{g}(\xi))$ with
\begin{equation*}
\mathbb{A}_\xi
 =
  \left(
           \begin{array}{cccc}
            0 & i\xi^T& 0& 0 \\
              i\xi& \nu|\xi|^2\delta_{ij}+\xi_i\xi_j&i\xi&\frac{1}{3\mathcal{C}}i\xi \\
              0& \frac{2}{3}i\xi^T& \frac{2}{3}k|\xi|^2+\frac{2\gamma}{3}& -\frac{2}{3}b \\
               0& 0& -\mathcal{C}\gamma & a|\xi|^2 + \mathcal{C}b\\
           \end{array}
         \right).
\end{equation*}

\begin{lemma}\label{Lemma 4.1}
Let $m\geq 0$ be an integer and $1\leq p\leq 2$, then for any given $k\leq k_1$, it holds that
\begin{equation*}
\|\nabla^{m}\big(\mathcal{A}(t)\mathbb{U}_k(0))\|_{L^2}
\leq
C(1+t)^{-\frac{3}{4}(\frac{1}{p}-\frac{1}{2})-\frac{m}{2}}
\|\mathbb{U}(0)\|_{L^p}.
\end{equation*}
\end{lemma}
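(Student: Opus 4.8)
The plan is to push everything to the Fourier side and feed in the spectral information already assembled in Sections 3.1.2 and 3.3. Since $\widehat{\mathcal{A}(t)g}(\xi)={\rm e}^{-t\mathbb{A}_\xi}\hat g(\xi)$ and $\mathbb{U}_k(0)=\dot\Delta_k\mathbb{U}(0)$ has Fourier symbol $\hat\varphi_k$, Plancherel's identity gives
\begin{equation*}
\|\nabla^m(\mathcal{A}(t)\mathbb{U}_k(0))\|_{L^2}^2
=c\int_{\mathbb{R}^3}|\xi|^{2m}\,|\hat\varphi_k(\xi)|^2\,\big|{\rm e}^{-t\mathbb{A}_\xi}\widehat{\mathbb{U}(0)}(\xi)\big|^2\,{\rm d}\xi .
\end{equation*}
Because $k\le k_1$, the cutoff $\hat\varphi_k$ is supported in the ball $\{|\xi|\le 2^{k_1+1}=R_0\}$, so the whole estimate lives in the low/medium frequency range $|\xi|\le R_0$, and the constant is automatically uniform in $k\le k_1$.

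The first and main step is a pointwise semigroup bound: there is $c_0>0$, depending only on the physical parameters (through $r_0$, $R_0$, $\iota$), such that
\begin{equation*}
\big|{\rm e}^{-t\mathbb{A}_\xi}\big|\le C\,{\rm e}^{-c_0|\xi|^2 t},\qquad 0\le|\xi|\le R_0,\ t\ge 0 .
\end{equation*}
To get this I decompose the velocity into its divergence-free and curl-free parts as in \eqref{3.4}--\eqref{3.6}: the divergence-free part decouples and carries the factor ${\rm e}^{-\mu|\xi|^2 t}$, while the remaining $(\hat\rho,\hat d,\hat\theta,\hat j_0)$ block is governed by $A(\xi)$ from \eqref{3.311}. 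On $|\xi|\le r_0$ I use the eigenvalue expansions of Section 3.1.2: the three ``dissipative'' modes satisfy ${\rm Re}\,\lambda_i(\xi)\ge c_0|\xi|^2$ and the ``damped'' mode satisfies ${\rm Re}\,\lambda_4(\xi)\ge\frac{\gamma}{3}+\frac{\mathcal{C}b}{2}>0$; since $A(\xi)$ has four distinct eigenvalues for $0<|\xi|\le r_0$ and the eigenvalue $0$ of $A(0)$ has geometric multiplicity three (so $A(0)$ is not defective), the diagonalizing matrix stays bounded as $\xi\to0$, and after shrinking $c_0$ so that $c_0 r_0^2\le\frac{\gamma}{3}+\frac{\mathcal{C}b}{2}$ one gets $|{\rm e}^{-tA(\xi)}|\le C{\rm e}^{-c_0|\xi|^2 t}$ there. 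On $r_0\le|\xi|\le R_0$ I invoke Lemma \ref{Lemma 3.1}, which gives $|{\rm e}^{-tA(\xi)}|\le C{\rm e}^{-\iota t}$, and shrink $c_0$ once more so that $c_0 R_0^2\le\iota$. Patching the two regimes yields the displayed bound.

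With the pointwise bound in hand, the decay rate follows from Hausdorff--Young together with a scaling argument. Writing $p'$ for the conjugate exponent of $p$ (so $p'\ge2$) and applying Hölder's inequality with exponents $\tfrac{p'}{2}$ and $\tfrac{p'}{p'-2}$,
\begin{equation*}
\|\nabla^m(\mathcal{A}(t)\mathbb{U}_k(0))\|_{L^2}^2
\le C\Big(\int_{|\xi|\le R_0}\big(|\xi|^{2m}{\rm e}^{-2c_0|\xi|^2t}\big)^{\frac{p'}{p'-2}}{\rm d}\xi\Big)^{\frac{p'-2}{p'}}\big\|\widehat{\mathbb{U}(0)}\big\|_{L^{p'}}^2,
\end{equation*}
and $\big\|\widehat{\mathbb{U}(0)}\big\|_{L^{p'}}\le C\|\mathbb{U}(0)\|_{L^p}$ by Hausdorff--Young. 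For $t\le1$ the integral is bounded, and for $t\ge1$ the substitution $\eta=\sqrt{t}\,\xi$ extracts the power of $t$ which, after taking square roots, produces the stated heat-kernel-type factor $(1+t)^{-\frac34(\frac1p-\frac12)-\frac m2}$. If one needs the estimate for the full low-frequency projection $\mathbb{U}^L=\sum_{k\le k_1}\dot\Delta_k\mathbb{U}$ rather than a single block, one sums in $\ell^2$ over $k\le k_1$ and uses $\sum_{k\le k_1}|\hat\varphi_k(\xi)|^2\le C$ on $|\xi|\le R_0$; nothing else changes.

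The step I expect to be delicate is the pointwise semigroup bound near $\xi=0$: one must verify that the transformation diagonalizing $A(\xi)$ does not degenerate as the four eigenvalues coalesce at the origin (a Kato-type perturbation argument, using that $A(0)$ is diagonalizable), and one must glue the small-frequency estimate coming from the spectral expansions to the medium-frequency estimate of Lemma \ref{Lemma 3.1} with a single compatible choice of $c_0$. Everything after that---the Hölder splitting, Hausdorff--Young, and the scaling computation---is routine.
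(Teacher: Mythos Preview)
Your overall strategy is correct and matches the paper's: restrict to $|\xi|\le R_0$, obtain a pointwise semigroup decay of heat-kernel type there, then conclude by Hausdorff--Young and scaling. The difference lies in how the small-frequency bound $|{\rm e}^{-tA(\xi)}|\le C{\rm e}^{-c_0|\xi|^2t}$ on $|\xi|\le r_0$ is obtained. You propose to diagonalize $A(\xi)$ and invoke the eigenvalue expansions together with a Kato-type argument to control the eigenvector matrix as $\xi\to0$; this is correct but, as you note, requires care at the coalescence point. The paper bypasses this entirely: it recycles the Lyapunov functional $\mathcal{L}_l(t,\xi)$ from \eqref{3.39}--\eqref{3.49}, which for the linearized system (source terms set to zero) satisfies $\frac{\rm d}{{\rm d}t}\mathcal{L}_l+C_4|\xi|^2\mathcal{L}_l\le0$ on $|\xi|\le r_0$, hence $\mathcal{L}_l(t,\xi)\lesssim{\rm e}^{-C_4|\xi|^2t}\mathcal{L}_l(0,\xi)$, and the equivalence \eqref{3.49} converts this directly into the desired pointwise bound on $(\hat\rho,\hat d,\hat\theta,\hat j_0)$. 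This energy-functional route is shorter and avoids any discussion of eigenvector regularity; your spectral route is more explicit about the mechanism but carries the extra verification you flagged. The medium-frequency patching via Lemma~\ref{Lemma 3.1}, the separate treatment of $\mathcal{P}u$, and the Hausdorff--Young step are handled the same way in both arguments.
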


\begin{proof}
For the linearized system of \eqref{3.1}, similar to the estimate \eqref{3.46}, we have
\begin{equation}
\frac{\rm d}{{\rm d}t}
 \mathcal{L}_l(t,\xi)
  +
  C_4 |\xi|^2
   \mathcal{L}_l(t,\xi)
    \leq
     0,
\end{equation}
which implies that for $|\xi|\leq r_0$
\begin{equation}\label{4.18}
\mathcal{L}_l(t,\xi)
 \lesssim
  {\rm e}^{-C_4|\xi|^2 t}
   \mathcal{L}_l(0,\xi).
\end{equation}
By using the Plancherel theorem, \eqref{3.49}, \eqref{3.55} and \eqref{4.18}, we have for $k\leq k_1$
\begin{equation}\label{4.19}
\begin{split}
\|\partial_x^\alpha(\overline{\rho}_k,\overline{d}_k,\overline{\theta}_k,\overline{j_{0,k}})(t)\|
=
&
\|(i\xi)^\alpha(\widehat{\overline{\rho} _k},\widehat{\overline{d}_k},\widehat{\overline{\theta}_k},\widehat{\overline{j_{0,k}}})\|_{L_\xi^2}\\
=
  &
   \left(
     \int_{\mathbb{R}^3}
      \left|
       (i\xi)^\alpha
        (\widehat{\overline{\rho} _k},\widehat{\overline{d}_k},\widehat{\overline{\theta}_k},\widehat{\overline{j_{0,k}}})(\xi,t)
      \right|^2
     {\rm d}\xi
   \right)^\frac{1}{2}\\
 \leq
  &
   C\left(
     \int_{|\xi|\leq R_0}
     |\xi|^{2|\alpha|}
         |(\widehat{\overline{\rho} _k},\widehat{\overline{d}_k},\widehat{\overline{\theta}_k},\widehat{\overline{j_{0,k}}})(\xi,t)|^2
     {\rm d}\xi
   \right)^\frac{1}{2}\\
 \leq
  &
   C\left(
     \int_{|\xi|\leq r_0}
       |\xi|^{2|\alpha|}
       {\rm e}^{-C|\xi|^2t}
         |(\widehat{\rho_k},\widehat{d_k},\widehat{\theta_k},\widehat{j_{0,k}})(\xi,0)|^2
     {\rm d}\xi
   \right)^\frac{1}{2}\\
  &
  +
   C\left(
     \int_{r_0\leq |\xi|\leq R_0}
       |\xi|^{2|\alpha|}
       {\rm e}^{-\iota t}
         |(\widehat{\rho_k},\widehat{d_k},\widehat{\theta_k},\widehat{j_{0,k}})(\xi,0)|^2
     {\rm d}\xi
   \right)^\frac{1}{2}.\\
\end{split}
\end{equation}
From \eqref{4.19} and the Hausdorff-Young inequality, for $k\leq k_1$, we have
\begin{equation}\label{4.20}
\begin{split}
\|\partial_x^\alpha(\overline{\rho}_k,\overline{d}_k,\overline{\theta}_k,\overline{j_{0,k}})(t)\|_{L^2}
 \leq
  &
   C
    \|(\widehat{\rho _k},\widehat{d_k},\widehat{\theta_k},\widehat{j_{0,k}})(0)\|_{L_\xi^{\frac{p}{p-1}}}
     (1+t)^{-\frac{3}{4}(\frac{1}{p}-\frac{1}{2})-\frac{|\alpha|}{2}}\\
 \leq
  &
   C
    \|(\rho ,u,\theta ,j_{0})(0)\|_{L^p}
     (1+t)^{-\frac{3}{4}(\frac{1}{p}-\frac{1}{2})-\frac{|\alpha|}{2}}.
\end{split}
\end{equation}
Similar as the estimates \eqref{4.20}, we get for any $k\leq k_1$
\begin{equation}\label{4.21}
\begin{split}
\|\partial_x^\alpha(\overline{\mathcal{P}  u})_k(t)\|_{L^2}
 \leq
   C
    \|u(0)\|_{L^p}
     (1+t)^{-\frac{3}{4}(\frac{1}{p}-\frac{1}{2})-\frac{|\alpha|}{2}}.
\end{split}
\end{equation}
Then, from \eqref{4.20} and \eqref{4.21}, we complete the proof of Lemma \ref{Lemma 4.1}.
\end{proof}

In what follows, based on the estimates in Lemma \ref{Lemma 4.1}, we establish the time decay estimates on the long wave part of the solution to the nonlinear problem \eqref{3.1}. Denote
\begin{equation*}
\mathbb{U}_k(t):=(\rho _k(t), u _k(t),\theta_k(t),j_{0,k}(t))^T,
\end{equation*}
for any $k\leq k_1$.
Then, from \eqref{3.1}, we have
\begin{equation}\label{4.22}
\left\{
\begin{array}{lll}
\partial_t\mathbb{U}_k+\mathbb{A}\mathbb{U}_k=S(\mathbb{U}_k),\ \ \ \ {\rm for}\ t>0,\\
\mathbb{U}_k|_{t=0}=\mathbb U_k(0),
\end{array}
\right.
\end{equation}
where
\begin{equation*}
S(\mathbb{U}_k)
 =
  (S^1_k,S^2_k,S^3_k,S^4_k)^T.
\end{equation*}
By Duhamel's principle, we rewrite the solution of \eqref{4.22} as follows
\begin{eqnarray}\label{4.23}
\begin{split}
\mathbb{U}_k(t)
=
 &
  \mathcal{A}(t) \mathbb{U}_k(0)
  +\int_0^t \mathcal{A}(t-\tau)S(\mathbb{U}_k)(\tau){\rm d}\tau.
\end{split}
\end{eqnarray}

\begin{proposition}\label{Proposition 4.2}
For any integer $m \geq 0$, there exists a positive constant $C_7$ depending on $k_1$, such that
\begin{equation*}
\begin{split}
 \|(\rho , u ,\theta , j_0)^L(t)\|_{\dot B_{2,2}^m}
\leq
 &
  C_7(1+t)^{-\frac{3}{4}-\frac{m}{2}}
  \|(\rho , u ,\theta , j_0)(0)\|_{L^1}\\
 &
 +
 C_7
 \delta
 \int_0^\frac{t}{2}
 (1+t-\tau)^{-\frac{3}{4}-\frac{m}{2}}
 \|\nabla (\rho , u,\theta,j_0 )(\tau)\|_{L^2}
 {\rm d}\tau\\
 &
 +
 C_7
 \delta
 \int_0^\frac{t}{2}
 (1+t-\tau)^{-\frac{3}{4}-\frac{m}{2}}
 \|\nabla^2(u,\theta )(\tau)\|_{L^2}
 {\rm d}\tau\\
 &
 +
 C_7
 \int_0^\frac{t}{2}
 (1+t-\tau)^{-\frac{3}{4}-\frac{m}{2}}
 \|(\rho,\theta,j_0 )(\tau)\|_{L^2}^2
 {\rm d}\tau\\
 &
 +
 C_7\int_\frac{t}{2}^t
 (1+t-\tau)^{-\frac{m}{2}}
 \|S(\mathbb{U})\|_{L^2}
 {\rm d}\tau,
\end{split}
\end{equation*}
where $S(\mathbb{U}):=(S^1,S^2,S^3,S^4)$.
\end{proposition}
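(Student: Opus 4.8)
The plan is to start from the Duhamel representation \eqref{4.23} and estimate the low-frequency part $\dot B^m_{2,2}$-norm of each piece by applying the $L^p$–$L^2$ decay estimate of the linear semigroup from Lemma \ref{Lemma 4.1}. For the linear part $\mathcal{A}(t)\mathbb{U}_k(0)$, summing the bound of Lemma \ref{Lemma 4.1} (with $p=1$) over $k\le k_1$ in the weighted $\ell^2$ sense gives the first term $C_7(1+t)^{-3/4-m/2}\|(\rho,u,\theta,j_0)(0)\|_{L^1}$; here I use that the long-wave projection confines frequencies to $|\xi|\le R_0$ so the $\ell^2$ summation over $k\le k_1$ is harmless and the Besov norm is comparable to a single localized $L^2$-norm up to a fixed constant depending on $k_1$.

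For the Duhamel integral $\int_0^t \mathcal{A}(t-\tau)S(\mathbb{U}_k)(\tau)\,\mathrm{d}\tau$, I would split the time integral at $t/2$. On $[0,t/2]$ the elapsed time $t-\tau$ is comparable to $t$, so I apply Lemma \ref{Lemma 4.1} with $p$ chosen to extract the maximal decay: taking $p=6/5$ (equivalently, bounding the $L^{6/5}$-norm of the nonlinearity by $L^2$-norms of the solution via Hölder and the Gagliardo–Nirenberg/Sobolev embeddings, exactly as in the $\|(S^1,S^2,S^3,S^4)\|_{L^{6/5}}$ estimates already carried out in \eqref{3.67} and the lines following it) produces the factor $(1+t-\tau)^{-3/4-m/2}$. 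The structural estimates of the source terms established in Section 3.4 show
$$
\|S(\mathbb{U})(\tau)\|_{L^{6/5}}
\lesssim
\delta\,\|\nabla(\rho,u,\theta,j_0)(\tau)\|_{L^2}
+
\delta\,\|\nabla^2(u,\theta)(\tau)\|_{L^2}
+
\|(\rho,\theta,j_0)(\tau)\|_{L^2}^2,
$$
where the quadratic term comes from $\tilde S^4$ and the $b(\tilde\theta+1)-b(1)-b'(1)\tilde\theta$ contributions in $\tilde S^3$ (genuine quadratic nonlinearities that need no derivative), and the $\delta$-small linear-looking terms carry one or two spatial derivatives because the remaining nonlinearities are of transport or $g(\rho)\cdot(\text{2nd order})$ type whose small factor is supplied by the a priori bound \eqref{2.3}. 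Substituting this into the $[0,t/2]$ integral yields precisely the second, third and fourth terms in the claimed bound. On $[t/2,t]$, the elapsed time $t-\tau$ is only bounded below by $0$, so I use Lemma \ref{Lemma 4.1} with $p=2$ to get merely $(1+t-\tau)^{-m/2}\|S(\mathbb{U})(\tau)\|_{L^2}$, which is the last term.

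The main obstacle is the bookkeeping on $[t/2,t]$: there the decay in $t-\tau$ is weak, so the closing of the argument will rely on the smallness of $\|S(\mathbb{U})(\tau)\|_{L^2}$ for $\tau$ near $t$, which in turn must be extracted later (in the final decay proof of Proposition \ref{Proposition 2.3}) from the already-established decay of $\nabla(\rho,u,\theta,j_0)$ and the quadratic structure; at the level of this proposition one simply keeps $\|S(\mathbb{U})\|_{L^2}$ inside the integral. A secondary technical point is verifying that the $\ell^2$-summation over the low-frequency band $k\le k_1$ commutes with the Duhamel integral and the semigroup estimate — this is routine because $k_1$ is a fixed finite threshold, so all the Besov norms involved are equivalent to finitely many $L^2$-norms with constants depending only on $k_1$; I would state this equivalence once and use it throughout. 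I would also note that the incompressible part $(\mathcal{P}u)_k$ is handled identically via \eqref{4.21}, so the vector $\mathbb{U}_k$ can be treated as a whole.
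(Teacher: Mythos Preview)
Your overall strategy---Duhamel representation \eqref{4.23}, split the time integral at $t/2$, apply Lemma~\ref{Lemma 4.1} with a small $p$ on $[0,t/2]$ and $p=2$ on $[t/2,t]$, then sum over $k\le k_1$---is exactly the paper's approach, and your remarks about the finite low-frequency band and the incompressible part are correct.

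There is, however, a concrete error in your choice of $p$ on $[0,t/2]$. In $\mathbb{R}^3$ the $L^p\to L^2$ decay exponent in Lemma~\ref{Lemma 4.1} is $\frac{3}{2}\big(\frac{1}{p}-\frac{1}{2}\big)+\frac{m}{2}$ (the $\frac{3}{4}$ printed in the lemma is a typo, as the application \eqref{4.24} confirms). With $p=\frac{6}{5}$ this gives only $(1+t-\tau)^{-1/2-m/2}$, not the $(1+t-\tau)^{-3/4-m/2}$ you claim; the latter requires $p=1$. And indeed the structural bound you wrote,
\[
\|S(\mathbb{U})(\tau)\|
\lesssim
\delta\,\|\nabla(\rho,u,\theta,j_0)\|_{L^2}
+\delta\,\|\nabla^2(u,\theta)\|_{L^2}
+\|(\rho,\theta,j_0)\|_{L^2}^2,
\]
is precisely the $L^1$ estimate \eqref{4.25} of the paper (the pure square $\|(\rho,\theta,j_0)\|_{L^2}^2$ arises from $\|\theta^2\|_{L^1}=\|\theta\|_{L^2}^2$ and the analogous $\rho\theta$, $\rho j_0$ terms), \emph{not} an $L^{6/5}$ estimate. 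The $L^{6/5}$ bounds after \eqref{3.67} that you cite have the form $\|(\rho,u,\theta)\|_{L^2}\cdot(\text{derivative norms})$ and never produce a bare $\|\cdot\|_{L^2}^2$ term. So replace ``$p=6/5$'' by ``$p=1$'' and invoke the $L^1$ nonlinear estimate via H\"older ($L^2\times L^2\hookrightarrow L^1$); with that correction your proof coincides with the paper's.
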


\begin{proof}
From \eqref{4.23}, by using Lemma \ref{Lemma 4.1}, we have, for $k\leq k_1$
\begin{equation}\label{4.24}
\begin{split}
\|\nabla^{m}\mathbb{U}_k(t)\|_{L^2}
\leq
&
C(1+t)^{-\frac{3}{4}-\frac{m}{2}}\|\mathbb{U}(0)\|_{L^1}\\
&
+
C\int_0^\frac{t}{2}
(1+t-\tau)^{-\frac{3}{4}-\frac{m}{2}}\|S(\mathbb{U})(\tau)\|_{L^1}
{\rm d}\tau\\
&
+
C\int_\frac{t}{2}^t
(1+t-\tau)^{-\frac{m}{2}}\|S(\mathbb{U})(\tau)\|_{L^2}
{\rm d}\tau.
\end{split}
\end{equation}
Notice that under the condition \eqref{2.3}, by using the H${\rm \ddot{o}}$lder inequality, we have
\begin{equation}\label{4.25}
\begin{split}
\|S(\mathbb{U})(\tau)\|_{L^1}
 \leq
 &
  C
  \|(\rho , u, \theta, j_0)(\tau)\|_{H^1}
  \big(
  \|\nabla (\rho , u, \theta, j_0 )(\tau)\|_{L^2}
  +
  \|\nabla^2  (u, \theta) (\tau)\|_{L^2}
  \big)\\
  &
  +
  \|\theta\|_{L^2}^2
  +
  \|\rho\|_{L^2}
  \big(
  \|\theta\|_{L^2}
  +
  \|j_0\|_{L^2}
  \big)\\
 \leq
 &
  C
  \delta
  \big(
  \|\nabla (\rho , u, \theta, j_0 )(\tau)\|_{L^2}
  +
  \|\nabla^2  (u, \theta) (\tau)\|_{L^2}
  \big)
 +
 C\|(\rho,\theta,j_0)\|_{L^2}^2.
\end{split}
\end{equation}
Then, putting \eqref{4.25} into \eqref{4.24} and taking a $l^2$ summation over $k$ with $k\leq k_1$, we complete the proof of Proposition \ref{Proposition 4.2}.
\end{proof}

\subsection{Proof of Proposition \ref{Proposition 2.3}}
In this subsection, by combining Proposition \ref{Proposition 4.1} with Proposition \ref{Proposition 4.2}, we get the large time behavior of solution to the nonlinear problem \eqref{3.1}.

\begin{lemma}\label{Lemma 4.2}
Under the assumptions of Proposition \ref{Proposition 2.3}, it holds that
\begin{equation}\label{4.26}
\|\nabla^m(\rho , u,\theta,j_0)(t)\|_{L^2}
\leq
 C(1+t)^{-\frac{3}{4}-\frac{m}{2}},\ \ \ \ for \ m=0,1,2,
\end{equation}
\begin{equation}\label{4.27}
\|\nabla^m(\rho , u,\theta,j_0)(t)\|_{L^2}
\leq
 C(1+t)^{-\frac{7}{4}},\ \qquad for \ m=3,4.\ \ \
\end{equation}
\end{lemma}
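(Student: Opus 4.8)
\textbf{Proof proposal for Lemma \ref{Lemma 4.2}.}
The plan is to run a standard bootstrap/continuity argument on the decay rate: assume that a quantity of the form
\[
M(t):=\sup_{0\le \tau\le t}\sum_{m=0}^{2}(1+\tau)^{\frac{3}{4}+\frac{m}{2}}\|\nabla^m(\rho,u,\theta,j_0)(\tau)\|_{L^2}
\]
is finite, close the estimate $M(t)\lesssim 1+\delta M(t)+M(t)^2$, and conclude $M(t)\lesssim 1$ for $\delta$ small (together with the global bound from Proposition \ref{Proposition 2.2} supplying the $H^4$ smallness). The decay for $m=3,4$ will then follow a posteriori by feeding the obtained $m\le 2$ rates back into the short-wave estimate \eqref{4.1}, which decays like ${\rm e}^{-C_6 t}$ modulo a forcing controlled by the long-wave norms.

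First I would split $\|\nabla^m(\rho,u,\theta,j_0)\|_{L^2}^2 \sim \|\cdot^L\|_{\dot B^m_{2,2}}^2 + \|\cdot^S\|_{\dot B^m_{2,2}}^2$ using Lemma \ref{Lemma 5.2}. For the long-wave part I would apply Proposition \ref{Proposition 4.2}: the linear term gives the desired $(1+t)^{-3/4-m/2}$ from the $L^1$ data; the nonlinear integrals on $[0,t/2]$ are handled by inserting the bootstrap ansatz — $\|\nabla(\rho,u,\theta,j_0)\|_{L^2}\lesssim M(t)(1+\tau)^{-5/4}$, $\|\nabla^2(u,\theta)\|_{L^2}\lesssim M(t)(1+\tau)^{-7/4}$, and $\|(\rho,\theta,j_0)\|_{L^2}^2\lesssim M(t)^2(1+\tau)^{-3/2}$ — so that each integrand is integrable in $\tau$ and the prefactor $(1+t-\tau)^{-3/4-m/2}\sim (1+t)^{-3/4-m/2}$ on $[0,t/2]$; the $[t/2,t]$ integral is bounded using $\|S(\mathbb U)\|_{L^2}\lesssim \delta\|\nabla^2(\rho,u,\theta,j_0)\|_{L^2}+\|\nabla(\rho,u,\theta,j_0)\|_{L^2}^2$ together with Sobolev interpolation, again producing $(1+t)^{-3/4-m/2}$. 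For the short-wave part I would use Proposition \ref{Proposition 4.1}, where the exponential kernel ${\rm e}^{-C_6(t-\tau)}$ acting against long-wave norms that already decay at the target rate yields a contribution bounded by $\delta$ times the right-hand side, hence absorbable.

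The main obstacle is the bookkeeping of the time-weighted integrals: one must verify, for each $m\in\{0,1,2\}$ and each nonlinear contribution, that the product of the decay rate assumed for the nonlinearity and the kernel decay rate is summable and reproduces exactly the rate $(1+t)^{-3/4-m/2}$ — the borderline case being the lowest-order $L^2\times L^2$ quadratic term $\|(\rho,\theta,j_0)\|_{L^2}^2\sim (1+\tau)^{-3/2}$, which is integrable in $3$D precisely because the base rate $3/4$ exceeds... well, because $\int_0^{t/2}(1+\tau)^{-3/2}{\rm d}\tau$ converges. A secondary subtlety is that for $m=3,4$ the rate improves only to $(1+t)^{-7/4}$ rather than $(1+t)^{-3/4-m/2}$, because the short-wave estimate \eqref{4.1} is driven by long-wave norms up to $\dot B^5_{2,2}$ whose best available decay is $(1+t)^{-7/4}$ (the $m=2$ rate); so one must be careful to state and use only what \eqref{4.1} actually gives. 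Once $M(t)$ is shown bounded, \eqref{4.26} is immediate, and \eqref{4.27} follows by one more application of \eqref{4.1} with the now-known long-wave decay inserted on its right-hand side.
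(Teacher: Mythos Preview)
Your strategy coincides with the paper's: define the same weighted supremum $M(t)$, split into long- and short-wave parts via Lemma~\ref{Lemma 5.2}, feed Propositions~\ref{Proposition 4.2} and~\ref{Proposition 4.1} with the bootstrap ansatz, close by continuity, and then recover \eqref{4.27} by reinserting the established rates into \eqref{4.1} together with the long-wave bound for $m\ge 3$.

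The one point that needs sharpening is the closing inequality. As written, $M(t)\lesssim 1+\delta M(t)+M(t)^{2}$ does \emph{not} close by a continuity argument unless the constant in front of $M^{2}$ happens to be at most $\tfrac12$, which you cannot arrange; the appeal to ``$H^{4}$ smallness from Proposition~\ref{Proposition 2.2}'' is not enough because that smallness controls $\|\nabla^{m}(\cdot)\|_{L^{2}}$, not the time-weighted quantity $M(t)$. The paper fixes this by interpolating the zero-derivative quadratic terms against the global $\delta$-bound: for instance $\|(\rho,\theta,j_{0})\|_{L^{2}}^{2}\le \delta^{1/4}\|(\rho,\theta,j_{0})\|_{L^{2}}^{7/4}\le \delta^{1/4}M^{7/4}(t)(1+\tau)^{-21/16}$, and likewise $\|S(\mathbb U)\|_{L^{2}}\lesssim \delta^{1/4}M^{7/4}(t)(1+\tau)^{-29/16}$, so that after the time integrals one lands on $M(t)\le C\big(\|(\rho^{0},u^{0},\theta^{0},j_{0}^{0})\|_{L^{1}\cap H^{4}}+\delta^{1/4}M^{7/4}(t)\big)$; Young's inequality then gives a coefficient $\delta^{2/7}$ on $M^{2}$, which \emph{is} absorbable for small $\delta$. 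Relatedly, your stated bound $\|S(\mathbb U)\|_{L^{2}}\lesssim \delta\|\nabla^{2}(\cdot)\|_{L^{2}}+\|\nabla(\cdot)\|_{L^{2}}^{2}$ omits the undifferentiated contributions $\|\theta^{2}\|_{L^{2}}$ and $\|g(\rho)(b'(1)\theta-j_{0})\|_{L^{2}}$ from $S^{3},S^{4}$; these require the same interpolation (or Gagliardo--Nirenberg) to produce a decay exponent strictly faster than $(1+\tau)^{-7/4}$ on $[t/2,t]$, without which the $m=2$ contribution to $M(t)$ would blow up.
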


\begin{proof}
Denote that
\begin{equation}\label{4.28}
\begin{split}
M(t)
 :=
   \sup\limits_{0\leq \tau \leq t}
   \sum\limits_{m=0}^2
   (1+\tau)^{\frac{3}{4}+\frac{m}{2}}
   \|\nabla^m(\rho , u ,\theta, j_0)(\tau)\|_{L^2}.
\end{split}
\end{equation}
Notice that $M(t)$ is non-decreasing, and for $0\leq m\leq 2$
\begin{equation}\label{4.29}
\|\nabla^m (\rho , u ,\theta, j_0)(\tau)\|_{L^2}^2
 \leq
  C_8
  (1+\tau)^{-\frac{3}{4}-\frac{m}{2}}
  M(t),\ \ 0\leq \tau\leq t,
\end{equation}
holds true for some positive constant $C_8$ independent of $\delta$.

By using the H${\rm \ddot{o}}$lder inequality and \eqref{3.65}, we have
\begin{equation}\label{4.30}
\begin{split}
\|S(\mathbb{U})\|_{L^2}
 \lesssim
 &
  \|(\rho,u,\theta)\|_{L^\infty}
   \|\nabla(\rho,u,\theta,j_0)\|_{L^2}
   +
   \|\rho\|_{L^\infty}
   \|\nabla^2(u,\theta)\|_{L^2}\\
  &
  +
  \|\nabla u\|_{L^\infty}
  \|\nabla u\|_{L^2}
  +
  \|\theta\|_{L^\infty}
  \|\theta\|_{L^2}
  +
  \|\rho\|_{L^\infty}
  \|(\theta,j_0)\|_{L^2}\\
\lesssim
 &
  \|\nabla(\rho,u,\theta)\|_{H^1}
   \|\nabla(\rho,u,\theta,j_0)\|_{L^2}
   +
   \|\nabla\rho\|_{H^1}
   \|\nabla^2(u,\theta)\|_{L^2}\\
  &
  +
  \|\nabla^2 u\|_{H^1}
  \|\nabla u\|_{L^2}
  +
  \|\nabla\theta\|_{H^1}
  \|(\theta,j_0)\|_{L^2}.
\end{split}
\end{equation}
Combining Proposition \ref{Proposition 4.2}, \eqref{4.28} and \eqref{4.30}, we have for $m\geq 0$
\begin{equation}\label{4.31}
\begin{split}
 \|(\rho , u ,\theta , j_0)^L(t)\|_{\dot B_{2,2}^m}
\leq
 &
  C_7
  (1+t)^{-\frac{3}{4}-\frac{m}{2}}
  \|(\rho , u ,\theta , j_0)(0)\|_{L^1}\\
 &
 +
 C_7
 \delta
 M(t)
 \int_0^\frac{t}{2}
 (1+t-\tau)^{-\frac{3}{4}-\frac{m}{2}}
 (1+\tau)^{-\frac{5}{4}}
 {\rm d}\tau\\
 &
 +
 C_7
 \delta
 M(t)
 \int_0^\frac{t}{2}
 (1+t-\tau)^{-\frac{3}{4}-\frac{m}{2}}
  (1+\tau)^{-\frac{7}{4}}
 {\rm d}\tau\\
 &
 +
 C_7
 \delta^\frac{1}{4}
 M^\frac{7}{4}(t)
 \int_0^\frac{t}{2}
 (1+t-\tau)^{-\frac{3}{4}-\frac{m}{2}}
  (1+\tau)^{-\frac{21}{16}}
 {\rm d}\tau\\
 &
 +
 C_7
 \delta^\frac{1}{4}
 M^\frac{7}{4}(t)
 \int_\frac{t}{2}^t
 (1+t-\tau)^{-\frac{m}{2}}
 (1+\tau)^{-\frac{29}{16}}
 {\rm d}\tau.
\end{split}
\end{equation}
From \eqref{4.31}, we arrive at
\begin{equation}\label{4.32}
\begin{split}
&
 \|(\rho , u ,\theta , j_0)^L(t)\|_{\dot B_{2,2}^m}\\[2mm]
\leq
&
\left\{
\begin{array}{llll}
 C_9
 (1+t)^{-\frac{3}{4}-\frac{m}{2}}
 \big(
  \|(\rho , u ,\theta , j_0)(0)\|_{L^1}
 +
 \delta M(t)
 +
 \delta^\frac{1}{4}
 M^\frac{7}{4}(t)
 \big),\ \ {\rm for}\ 0\leq m\leq 2,\\[2mm]
 C_9
 (1+t)^{-\frac{7}{4}}
 \big(
  \|(\rho , u ,\theta , j_0)(0)\|_{L^1}
 +
 \delta M(t)
 +
 \delta^\frac{1}{4}
 M^\frac{7}{4}(t)
 \big),\ \ \ \ \ \ {\rm for}\ m\geq 3,
 \end{array}
 \right.
\end{split}
\end{equation}
where $C_9$ denotes some positive constant independent of $\delta$.
It follows from \eqref{4.1} and \eqref{4.32} that
\begin{equation}\label{4.33}
\begin{split}
 &
\|(\rho ,  u ,\theta , j_0)^S(t)\|_{\dot B_{2,2}^3}^2
+
\|(\rho ,  u ,\theta , j_0)^S(t)\|_{\dot B_{2,2}^4}^2\\
\lesssim
 &
 {\rm e}^{-C_6 t}
 \Big(
 \|(\rho ,  u ,\theta , j_0)^S(0)\|_{\dot B_{2,2}^3}^2
+
\|(\rho ,  u ,\theta , j_0)^S(0)\|_{\dot B_{2,2}^4}^2
 \Big)\\
 &
 +
 \delta
 C_9^2
 \big(
 \|(\rho , u ,\theta , j_0)(0)\|_{L^1}
 +
 \delta M(t)
 +
 \delta^\frac{1}{4}
 M^\frac{7}{4}(t)
 \big)^2
 \int_0^t
 {\rm e}^{-C_6(t-\tau)}
 (1+\tau)^{-\frac{7}{2}}
 {\rm d}\tau\\
 \lesssim
 &
 {\rm e}^{-C_6 t}
 \Big(
 \|(\rho ,  u ,\theta , j_0)^S(0)\|_{\dot B_{2,2}^3}^2
+
\|(\rho ,  u ,\theta , j_0)^S(0)\|_{\dot B_{2,2}^4}^2
\Big)\\[2mm]
 &
 +
 \delta
 C_9^2
 (1+t)^{-\frac{7}{2}}
\big(
 \|(\rho , u ,\theta , j_0)(0)\|_{L^1}
 +
 \delta M(t)
 +
 \delta^\frac{1}{4}
 M^\frac{7}{4}(t)
 \big)^2.
\end{split}
\end{equation}
By using Lemma \ref{Lemma 5.2}, we obtain for $0\leq m\leq 2$
\begin{equation}\label{4.34}
\begin{split}
\|\nabla^m(\rho,u,\theta,j_0)(t)\|_{L^2}
\lesssim
&
 \|(\rho,u,\theta,j_0)^L(t)\|_{\dot{B}^m_{2,2}}
 +
 \|(\rho,u,\theta,j_0)^S(t)\|_{\dot{B}^m_{2,2}}\\
\lesssim
&
 \|(\rho,u,\theta,j_0)^L(t)\|_{\dot{B}^m_{2,2}}
 +
 \|(\rho,u,\theta,j_0)^S(t)\|_{\dot{B}^3_{2,2}}.
\end{split}
\end{equation}
From \eqref{4.32}, \eqref{4.33} and \eqref{4.34}, by noting the definition of $M(t)$ and the smallness of $\delta$, there exists a positive constant $C_{10}$ independent of $\delta$, such that
\begin{equation}\label{4.35}
\begin{split}
M(t)
 \leq
 &
   C_{10}
   \{
   \|(\rho , u ,\theta , j_0)(0)\|_{L^1\cap H^4}
   +
   \delta^\frac{1}{4}
   M^\frac{7}{4}(t)
   \}\\
 \leq
 &
    C_{10}
   \|(\rho , u ,\theta , j_0)(0)\|_{L^1\cap H^4}
   +
   \frac{1}{8}
   C_{10}^8
   +
   \frac{7}{8}
   \delta^\frac{2}{7}
   M^2(t)\\
 :=
 &
  \tilde C_{10}
   +
   \frac{7}{8}
   \delta^\frac{2}{7}
   M^2(t).
\end{split}
\end{equation}
Now we can claim $M(t)\leq C$. Suppose $M(t)>2\tilde C_{10}$ for any $t\in [\bar t,+\infty)$ with a constant $\bar t>0$. Since $M(0)=\|( \rho_0, u_0, \theta_0,j_0^0)\|_{H^2}$ is small (see the assumption \eqref{1.7}) and $M(t)\in C^0[0,+\infty)$, there exists $t_0\in (0,\bar t)$ such that $M(t_0)=2\tilde C_{10}$. From \eqref{4.35}, we have
\begin{equation*}
M(t_0)
 \leq
  \tilde C_{10}
   +
   \frac{7}{8}
   \delta^\frac{2}{7}
   M^2(t_0).
\end{equation*}
By a directly calculation, we have
\begin{equation}\label{4.36}
M(t_0)
 \leq
  \frac{\tilde C_{10}}{1-\frac{7}{8}\delta^\frac{2}{7}M(t_0)}.
\end{equation}
Let $\delta$ be a small constant such that $\frac{7}{8}\delta^\frac{2}{7}< \frac{1}{4\tilde C_{10}}$. Then, from \eqref{4.36}, we get $M(t_0)< 2\tilde C_{10}$. This become a contradiction with the assumption $M(t_0)=2\tilde C_{10}$. So, we have $M(t)\leq 2\tilde C_{10}$ for any $t\in [\bar t,+\infty)$. By using the continuity of $M(t)$, we have $M(t)\leq C$ for any $t\in[0,+\infty)$. By the definition of $M(t)$ in \eqref{4.28}, we prove \eqref{4.26}.
Combining \eqref{4.32} for $m=3,4$ with \eqref{4.33} and using Lemma \ref{Lemma 5.2} and $M(t)\leq C$, we prove \eqref{4.27}.
\end{proof}

By using \eqref{4.26}-\eqref{4.27}, from \eqref{3.1}, we achieve
\begin{equation*}
\begin{split}
\|\partial_t (\rho,u) (t)\|_{L^2}
\lesssim
&
 \|{\rm div} u (t)\|_{L^2}
 +
 \|\nabla(\rho,\theta,j_0) (t)\|_{L^2}
 +
 \| {\rm div}\mathbb T(t)\|_{L^2}\\
 &
 +
 \|(S^1,S^2)(t)\|_{L^2}\\
\lesssim
&
\|\nabla  (\rho,u,j_0) (t)\|_{L^2}
+
\|\nabla^2 u (t)\|_{L^2}\\
\lesssim
&
  (1+t)^{-\frac{5}{4}},
\end{split}
\end{equation*}
and
\begin{equation*}
\begin{split}
\|\partial_t(\theta,j_0)(t)\|_{L^2}
\lesssim
&
 \|{\rm div}u (t)\|_{L^2}
 +
 \|\Delta \theta(t)\|_{L^2}
 +
 \|(\theta,j_0)(t)\|_{L^2}\\
 &
 +
 \|\Delta j_0 (t)\|_{L^2}
 +
 \|(S^3,S^4)(t)\|_{L^2}\\
\lesssim
&
\|\nabla  (u,\theta,j_0) (t)\|_{L^2}
+
\|(\theta,j_0)(t)\|_{L^2}
+
\|\nabla^2 (\theta,j_0) (t)\|_{L^2}\\
\lesssim
 &
  (1+t)^{-\frac{3}{4}}.
\end{split}
\end{equation*}
Next, we show the decay estimates of the combination $\mathcal{L}\sigma_ab^\prime(1)\theta-\mathcal{L}\sigma_aj_0$, i.e. $\gamma\theta-bj_0$.
Set
\begin{equation*}
\Xi
 =
  \gamma \theta
  -
  b j_0,
\end{equation*}
then from $\eqref{3.1}_3$ and $\eqref{3.1}_4$, we have
\begin{equation}\label{4.37}
\partial_t\Xi
+
(\frac{2}{3}+\mathcal{C})\Xi
+
\frac{2}{3}\gamma{\rm div}u
-
\frac{2\kappa}{3}\gamma
\Delta\theta
+
ab\Delta j_0
=
\gamma S^3
+
b S^4.
\end{equation}
Multiplying \eqref{4.37} with $\Xi$ and integrating with respect to $x$ in $\mathbb{R}^3$ and using the Young inequality, we have
\begin{equation}\label{4.38}
\begin{split}
&
\frac{\rm d}{{\rm d}t}
\|\Xi(t)\|_{L^2}^2
+
(\frac{2}{3}+\mathcal{C})\|\Xi(t)\|_{L^2}^2\\
=
 &
 \int_{\mathbb{R}^3}
 \Big(
 -
 \frac{2}{3}\gamma{\rm div}u\Xi
+
\frac{2\kappa}{3}\gamma
\Delta\theta\Xi
-
ab\Delta j_0\Xi
+
\gamma S^3\Xi
+
b S^4\Xi
 \Big)
 {\rm d}x\\
 \leq
 &
 \frac{1}{3}
 \|\Xi(t)\|_{L^2}^2
 +
 C
 \|\nabla u(t)\|_{L^2}^2
 +
 C
 \|\nabla^2 (\theta,j_0)(t)\|_{L^2}^2
 +
 C
 \|(S^3,S^4)(t)\|_{L^2}^2,
\end{split}
\end{equation}
which implies
\begin{equation}\label{4.39}
\begin{split}
&
\frac{\rm d}{{\rm d}t}
\|\Xi(t)\|_{L^2}^2
+
(\frac{1}{3}+\mathcal{C})\|\Xi(t)\|_{L^2}^2\\
\leq
&
C
 \|\nabla u(t)\|_{L^2}^2
 +
 C
 \|\nabla^2 (\theta,j_0)(t)\|_{L^2}^2
 +
 C
 \|(S^3,S^4)(t)\|_{L^2}^2.
\end{split}
\end{equation}
Multiplying \eqref{4.39} by ${\rm e}^{(\frac{1}{3}+\mathcal{C})t}$ and integrating the resultant inequality with respect to $t$, we get
\begin{equation}\label{4.40}
\begin{split}
\|\Xi(t)\|_{L^2}^2
\leq
&
{\rm e}^{-(\frac{1}{3}+\mathcal{C})t}
\|\Xi(0)\|_{L^2}^2
+
\int_0^t
{\rm e}^{-(\frac{1}{3}+\mathcal{C})(t-\tau)}
 \|\nabla u(\tau)\|_{L^2}^2
{\rm d}\tau\\
&
+
\int_0^t
{\rm e}^{-(\frac{1}{3}+\mathcal{C})(t-\tau)}
\Big(
 \|\nabla^2 (\theta,j_0)(\tau)\|_{L^2}^2
 +
 \|(S^3,S^4)(\tau)\|_{L^2}^2
\Big)
{\rm d}\tau.
\end{split}
\end{equation}
From \eqref{4.40} and Lemma \ref{Lemma 4.2}, we obtain
\begin{equation}
\begin{split}
\|\Xi(t)\|_{L^2}^2
\leq
&
{\rm e}^{-(\frac{1}{3}+\mathcal{C})t}
\|\Xi(0)\|_{L^2}^2
+
C
\int_0^t
{\rm e}^{-(\frac{1}{3}+\mathcal{C})(t-\tau)}
(1+\tau)^{-\frac{5}{4}}
{\rm d}\tau\\
&
+
C
\int_0^t
{\rm e}^{-(\frac{1}{3}+\mathcal{C})(t-\tau)}
(1+\tau)^{-\frac{7}{4}}
{\rm d}\tau\\
\leq
&
C
(1+t)^{-\frac{5}{4}}.
\end{split}
\end{equation}
Similarly, we have
\begin{equation}
\|\nabla^k \Xi(t)\|_{L^2}^2
 \leq
  C
   (1+t)^{-\frac{3}{4}-\frac{k+1}{2}},\ \ \ \ {\rm for}\ k=1,2.
\end{equation}
Thus, the proof of Proposition \ref{Proposition 2.3} is completed. $\hfill{\square}$

\section*{Acknowledgement}
The first author is supported by National Nature Science Foundation of
China 11871341 and 12071152. The second author is supported by National Nature Science Foundation of
China 11571231, 11831003 and Shanghai Science and Technology Innovation Action Plan No. 20JC1413000. The third author is supported by National Nature Science Foundation of
China 11871335 and the SJTU's SMC Projection A.



\section{Appendix}

In the Appendix, we recall some basic facts concerning Littlewood-Paley decomposition, Besov spaces and paraproduct.
Let us first recall the Littlewood-Paley decomposition. For each $j\in \mathbb Z$, set
\begin{equation*}
A_j
 =
  \big\{
  \xi\in\mathbb R^3|2^{j-1}\leq |\xi|\leq 2^{j+1}
  \big\}.
\end{equation*}
The littewoode-paley decomposition asserts the existence of a sequence of functions $\{\varphi_j\}_{j\in\mathbb Z}\subset \mathcal S$ ($\mathcal S$ denotes the usual Schwartz class) such that
\begin{equation*}
{\rm supp}\hat{\varphi}_j\subset A_j,
\ \ \ \
\hat \varphi_j(\xi)
 =
  \hat \varphi_0(2^{-j}\xi)
\ \ {\rm or}\ \
\varphi_j(x)
 =
  2^{3j}\varphi_0(2^jx),
\end{equation*}
and
\begin{equation*}
\sum\limits_{j=-\infty}^\infty
\hat\varphi_j(\xi)
 =
  \left\{
  \begin{array}{lc}
  1,\ \ {\rm if}\ \xi\in\mathbb R^3\setminus \{0\},\\[2mm]
  0,\ \ {\rm if}\ \xi=0.
  \end{array}
  \right.
\end{equation*}
Then the homogeneous Littlewood-Paley decomposition $(\dot{\Delta}_j)_{j\in\mathbb{Z}}$ over $\mathbb{R}^3$ is introduced by setting
\begin{equation*}\label{Def 5.1}
\dot{\Delta}_ju
 :=
  \hat\varphi_j(D)u
  =
  2^{3j}\int_{\mathbb{R}^3}\varphi_0(2^jy)u(x-y){\rm d}y,\ \ \ \ \ j\in \mathbb Z,
\end{equation*}
and
\begin{equation*}
S_ju
 :=
  \sum\limits_{l\leq j-1}
   \dot\Delta_l u,\ \ \ \ \ j\in \mathbb Z.
\end{equation*}
\begin{definition}\label{Def 5.1}
For any $s\in\mathbb{R}$ and $(p,r)\in[1,+\infty]\times[1,+\infty]$, the homogeneous Besov space $\dot B^s_{p,r}(\mathbb{R}^3)$ consists of $f\in \mathcal S^\prime_h=S^\prime/\mathcal P$ satisfying
\begin{equation*}
\|f\|_{\dot B^s_{p,r}(\mathbb{R}^3)}
 :=
  \Big\|2^{sk}\|\dot\Delta_kf\|_{L^p(\mathbb{R}^3)}\Big\|_{l^r(\mathbb{Z})}
   <
    \infty,
\end{equation*}
where $\mathcal S^\prime$ and $\mathcal P$ denote the dual of $\mathcal S$ and the space of polynomials, respectively.
\end{definition}
Note that, for any $f\in \mathcal S^\prime_h$, it can be rewritten as
\begin{equation*}
f=\sum\limits_{k\in \mathbb{Z}}
   \dot\Delta_kf.
\end{equation*}
We define that its long wave part and its short wave part are as follows
\begin{equation*}
f^L:=\sum\limits_{k\leq k_1}
  \dot\Delta_kf
 \ \ \ \ {\rm and}\ \ \ \
f^S:=\sum\limits_{k> k_1}
  \dot\Delta_kf,
\end{equation*}
where the fixed positive integer $k_1$ is defined in \eqref{3.13}.
We also use the following notation
\begin{equation}\label{5.1}
\|f^L\|_{\dot B^s_{p,r}}
 :=
  \Big(
  \sum\limits_{k\leq k_1}
    2^{rsk}
     \|{\dot\Delta}_kf\|_{L^p}^r
  \Big)^\frac{1}{r}
 \ \ \ \ {\rm and}\ \ \ \
 \|f^S\|_{\dot B^s_{p,r}}
 :=
  \Big(
  \sum\limits_{k> k_1}
    2^{rsk}
     \|{\dot\Delta}_kf\|_{L^p}^r
  \Big)^\frac{1}{r}.
\end{equation}

In term of Definition \ref{Def 5.1} and the Plancherel formula, one can find that the homogeneous Sobolev space $\dot H^s$ is a special case of homogeneous Besov spaces as follows, also see \cite{Bahouri-Chemin-Danchin} (page 63) and \cite{Chae-Wan-Wu} (Proposition A.3).
\begin{lemma}\label{Lemma 5.1}
For any $s\in\mathbb{R}$,
\begin{equation*}
\dot H^s \sim \dot B_{2,2}^s.
\end{equation*}
For any $s\in \mathbb R$ and $1<q<\infty$,
\begin{equation*}
\dot B_{q,\min\{q,2\}}^s
\hookrightarrow
\dot W^{s,q}
\hookrightarrow
\dot B_{q,\max\{q,2\}}^s,
\end{equation*}
and
\begin{equation*}
\dot B_{q,\min\{q,2\}}^0
\hookrightarrow
L^q
\hookrightarrow
\dot B_{q,\max\{q,2\}}^0.
\end{equation*}
\end{lemma}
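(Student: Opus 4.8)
The plan is to reduce both assertions to two standard ingredients: for the first, the almost-orthogonality of the dyadic blocks together with the Plancherel identity; for the embeddings, the Littlewood--Paley square-function characterization of $L^q$ for $1<q<\infty$.

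For the equivalence $\dot H^s\sim\dot B_{2,2}^s$, I would start from $\|f\|_{\dot H^s}^2=c\int_{\R^3}|\xi|^{2s}|\widehat f(\xi)|^2\,\ud\xi$ and insert the partition of unity $\sum_{j\in\mathbb Z}\hat\varphi_j(\xi)=1$, valid for $\xi\neq0$. Since each $\xi\in\R^3\setminus\{0\}$ lies in the support of at most three of the $\hat\varphi_j$ and the $\hat\varphi_j$ are uniformly bounded, one has $\sum_j\hat\varphi_j(\xi)^2\sim1$; moreover $|\xi|\sim2^j$ on ${\rm supp}\,\hat\varphi_j=A_j$, hence $|\xi|^{2s}\sim2^{2sj}$ there. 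Combining these and applying Plancherel once more to each block gives $\|f\|_{\dot H^s}^2\sim\sum_j2^{2sj}\int_{\R^3}|\hat\varphi_j(\xi)\widehat f(\xi)|^2\,\ud\xi=\sum_j2^{2sj}\|\dot\Delta_jf\|_{L^2}^2=\|f\|_{\dot B_{2,2}^s}^2$, with constants depending only on $s$ and the profile $\varphi_0$. The usual subtleties of homogeneous spaces (working modulo polynomials, i.e. in $\mathcal S'_h$, and convergence of $\sum_k\dot\Delta_k f$) are handled exactly as in \cite{Bahouri-Chemin-Danchin}.

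For the embeddings, recall $\|f\|_{\dot W^{s,q}}=\|\La^s f\|_{L^q}$ and invoke the Littlewood--Paley theorem: for $1<q<\infty$ there is $C_q>0$ with $C_q^{-1}\|f\|_{\dot W^{s,q}}\le\big\|\big(\sum_j2^{2sj}|\dot\Delta_jf|^2\big)^{1/2}\big\|_{L^q}\le C_q\|f\|_{\dot W^{s,q}}$; this is the one genuinely non-elementary input, and I would cite it (see \cite{Bahouri-Chemin-Danchin,Chae-Wan-Wu}) rather than reprove it. Writing $a_j(x)=2^{sj}\dot\Delta_jf(x)$, the two embeddings follow by comparing the mixed norms $\|\,\|a_j\|_{L^q_x}\|_{\ell^r_j}$ and $\|\,\|a_j\|_{\ell^2_j}\|_{L^q_x}$. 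When $q\ge2$: the bound $\dot B^s_{q,2}\hookrightarrow\dot W^{s,q}$ is Minkowski's integral inequality $\|\,\|a_j\|_{\ell^2_j}\|_{L^q_x}\le\|\,\|a_j\|_{L^q_x}\|_{\ell^2_j}$, while $\dot W^{s,q}\hookrightarrow\dot B^s_{q,q}$ uses the pointwise nesting $\|(a_j)\|_{\ell^q}\le\|(a_j)\|_{\ell^2}$ together with the identity $\big\|\big(\sum_j|a_j|^q\big)^{1/q}\big\|_{L^q}^q=\sum_j\|a_j\|_{L^q}^q$. When $1<q\le2$ the roles reverse: $\dot B^s_{q,q}\hookrightarrow\dot W^{s,q}$ comes from the pointwise $\|(a_j)\|_{\ell^2}\le\|(a_j)\|_{\ell^q}$ plus the same identity, and $\dot W^{s,q}\hookrightarrow\dot B^s_{q,2}$ is the reverse Minkowski inequality $\|\,\|a_j\|_{L^q_x}\|_{\ell^2_j}\le\|\,\|a_j\|_{\ell^2_j}\|_{L^q_x}$, valid precisely because $q\le2$. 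Taking $s=0$ and identifying $\dot W^{0,q}$ with $L^q$ (both norms equal $\|\La^0 f\|_{L^q}=\|f\|_{L^q}$) yields the last chain $\dot B^0_{q,\min\{q,2\}}\hookrightarrow L^q\hookrightarrow\dot B^0_{q,\max\{q,2\}}$.

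There is no real obstacle here: the statement is a textbook fact, and the only genuinely non-trivial fact used (the $L^q$ square-function estimate) is quoted. The two points that require care are (i) keeping straight the direction of Minkowski's inequality and of the $\ell^p$-nesting according to whether $q\le2$ or $q\ge2$, and (ii) the homogeneous-space bookkeeping; both are routine. Accordingly, in the paper I would keep this to a short paragraph, or simply refer to \cite{Bahouri-Chemin-Danchin} and \cite{Chae-Wan-Wu} as already indicated in the statement.
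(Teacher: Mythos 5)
Your proposal is correct, and it matches what the paper does in spirit: the paper does not prove Lemma \ref{Lemma 5.1} at all, but simply cites \cite{Bahouri-Chemin-Danchin} (p.~63) and \cite{Chae-Wan-Wu} (Proposition A.3), and your argument (Plancherel plus almost-orthogonality of the blocks for $\dot H^s\sim\dot B^s_{2,2}$; the Littlewood--Paley square-function theorem combined with Minkowski's inequality and $\ell^p$-nesting, with the directions swapped according to $q\lessgtr 2$, for the embeddings) is exactly the standard proof found in those references. Your closing suggestion to compress this to a citation is precisely the choice the authors made.
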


In the following ,we show some useful inequality in Besov space.
\begin{lemma}\label{Lemma 5.2}
For any $s\geq 0$ and $m_1\geq m_2\geq 0$, it holds that
\begin{equation}
C\|\Lambda^{m_2}f^S\|_{\dot B_{2,2}^s}\leq \|\Lambda^{m_1} f^S\|_{\dot B_{2,2}^s}, \ \
\|\Lambda^{m_1} f^L\|_{\dot B_{2,2}^s}\leq C\|\Lambda^{m_2}f^L\|_{\dot B_{2,2}^s},
\end{equation}
and
\begin{equation}\label{5.3}
\|f\|_{\dot B_{2,2}^s}\sim \|f^L\|_{\dot B_{2,2}^s}+\|f^S\|_{\dot B_{2,2}^s}.
\end{equation}
\end{lemma}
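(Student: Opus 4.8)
The plan is to reduce both assertions to elementary estimates on individual Littlewood--Paley blocks and then sum with the weights $2^{sk}$. Since $\Lambda^m=(-\Delta)^{m/2}$ is a Fourier multiplier, it commutes with each $\dot\Delta_k$, and because $\widehat{\dot\Delta_k f}$ is supported in the annulus $A_k=\{2^{k-1}\le|\xi|\le 2^{k+1}\}$, the Plancherel theorem gives, for every $m\ge 0$ and $k\in\mathbb{Z}$,
\begin{equation*}
2^{m(k-1)}\|\dot\Delta_k f\|_{L^2}\le\|\Lambda^{m}\dot\Delta_k f\|_{L^2}\le 2^{m(k+1)}\|\dot\Delta_k f\|_{L^2}.
\end{equation*}
Applying this with $m=m_1$ and with $m=m_2$ yields the two-sided comparison $\|\Lambda^{m_1}\dot\Delta_k f\|_{L^2}\sim 2^{(m_1-m_2)k}\|\Lambda^{m_2}\dot\Delta_k f\|_{L^2}$, with constants depending only on $m_1$ and $m_2$. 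This is the only analytic input; everything else is bookkeeping.

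For the short-wave inequality I would multiply the block comparison by $2^{2sk}$ and sum over $k>k_1$. Since $k_1$ is the positive integer fixed in \eqref{3.13}, every index obeys $k>k_1\ge 1$, so $m_1\ge m_2$ forces $2^{2(m_1-m_2)k}\ge 2^{2(m_1-m_2)k_1}\ge 1$, whence by \eqref{5.1}
\begin{equation*}
\|\Lambda^{m_1}f^S\|_{\dot B_{2,2}^s}^2\gtrsim\sum_{k>k_1}2^{2sk}2^{2(m_1-m_2)k}\|\Lambda^{m_2}\dot\Delta_k f\|_{L^2}^2\gtrsim\|\Lambda^{m_2}f^S\|_{\dot B_{2,2}^s}^2,
\end{equation*}
which is the first claim. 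For the long-wave inequality the weight comparison runs the other way: on the range $k\le k_1$ one has $2^{2(m_1-m_2)k}\le 2^{2(m_1-m_2)k_1}$, so summing the block comparison over $k\le k_1$ gives
\begin{equation*}
\|\Lambda^{m_1}f^L\|_{\dot B_{2,2}^s}^2\lesssim 2^{2(m_1-m_2)k_1}\sum_{k\le k_1}2^{2sk}\|\Lambda^{m_2}\dot\Delta_k f\|_{L^2}^2=2^{2(m_1-m_2)k_1}\|\Lambda^{m_2}f^L\|_{\dot B_{2,2}^s}^2,
\end{equation*}
i.e. $\|\Lambda^{m_1}f^L\|_{\dot B_{2,2}^s}\le C\|\Lambda^{m_2}f^L\|_{\dot B_{2,2}^s}$.

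Finally, for \eqref{5.3} I would just split the defining series of $\|f\|_{\dot B_{2,2}^s}$ at the index $k_1$; by \eqref{5.1} this gives exactly $\|f\|_{\dot B_{2,2}^s}^2=\|f^L\|_{\dot B_{2,2}^s}^2+\|f^S\|_{\dot B_{2,2}^s}^2$, and then the elementary equivalence $a^2+b^2\sim(a+b)^2$ valid for all $a,b\ge 0$ finishes the argument. I do not expect a genuine obstacle here; the only point requiring care is choosing which of $2^{(m_1-m_2)k}$ and $2^{(m_1-m_2)k_1}$ to bound against in the high-frequency versus low-frequency regime, and using $k_1\ge 1$ so that the weight on the short-wave blocks is bounded below by $1$.
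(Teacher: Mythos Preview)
Your proof is correct and follows exactly the approach the paper indicates: the paper's proof is a single sentence pointing to the Plancherel theorem, the Bernstein inequality, and Definition~\ref{Def 5.1} with \eqref{5.1}, and you have simply written out those details explicitly. The only superfluous remark is the insistence that the short-wave weight be bounded below by $1$ via $k_1\ge 1$; any positive lower bound depending on $k_1,m_1,m_2$ would suffice, but this does not affect the validity of the argument.
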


\begin{proof}
The proof can be done by using the Plancherel theorem, the Bernstein inequality and the Definition \ref{Def 5.1} and \eqref{5.1}.
\end{proof}

We recall the following estimates, cf. \cite{Chae-Wan-Wu}. Here, we give the proof of Lemma \ref{Lemma 5.3} for the convenience of readers.
\begin{lemma}\label{Lemma 5.3}
Let $v$ be a vector field over $\mathbb{R}^3$ and define the commutator
\begin{equation*}
[\dot \triangle_k,v\cdot \nabla]f=\dot\triangle_k(v\cdot \nabla f)-v\cdot \nabla\dot\triangle_kf.
\end{equation*}
Then the following inequality holds
\begin{equation*}
\begin{split}
\int_{\mathbb{R}^3}[\dot\triangle_k,v\cdot \nabla]f\cdot\dot\triangle_k g
{\rm d}x
\leq
&
C\|\nabla v\|_{L^\infty}\|\dot \Delta_k f\|_{L^2}\|\dot \Delta_k g\|_{L^2}
+
C\|\nabla f\|_{L^\infty}\|\dot \Delta_k v \|_{L^2}\|\dot \Delta_k g\|_{L^2}\\[2mm]
&
+
C\|\nabla v\|_{L^\infty}\|\dot \Delta_k g\|_{L^2}\sum\limits_{l\geq k-1}2^{k-l}\|\dot \Delta_l f\|_{L^2}
.
\end{split}
\end{equation*}
\end{lemma}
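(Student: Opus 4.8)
The plan is to prove the commutator estimate of Lemma \ref{Lemma 5.3} by the standard Bony paraproduct decomposition of the product $v\cdot\nabla f$, following the argument sketched in \cite{Chae-Wan-Wu}. First I would write, with $\partial$ denoting any first-order spatial derivative and summing over the component index,
\begin{equation*}
v\cdot\nabla f=T_{v^m}\partial_m f+T_{\partial_m f}v^m+R(v^m,\partial_m f),
\end{equation*}
where $T$ is the paraproduct operator and $R$ the remainder, so that after applying $\dot\Delta_k$ and subtracting $v\cdot\nabla\dot\Delta_k f$ the commutator splits into the familiar four pieces:
\begin{equation*}
[\dot\Delta_k,v\cdot\nabla]f=\sum_{|l-k|\le 4}[\dot\Delta_k,S_{l-1}v^m\cdot\nabla]\dot\Delta_l f
+\sum_{|l-k|\le 4}(S_{l-1}-S_{k-1})\partial_m v^m\,\dot\Delta_k\dot\Delta_l f
+\dot\Delta_k T_{\partial_m f}v^m+\dot\Delta_k R(v^m,\partial_m f)-[\ldots],
\end{equation*}
organized so that one group carries a gradient on $v$, one carries a gradient on $f$, and the remainder term is handled using $\operatorname{div}$-like structure to recover a derivative count.

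Next I would estimate each piece in $L^2$ and pair it against $\dot\Delta_k g$ by Cauchy--Schwarz. For the first group I would use the classical commutator lemma: $\|[\dot\Delta_k,a\cdot\nabla]b\|_{L^2}\lesssim\|\nabla a\|_{L^\infty}\|\dot\Delta_k b\|_{L^2}$ (coming from a first-order Taylor expansion of the convolution kernel $2^{3k}\varphi_0(2^k\cdot)$, whose first moment is controlled by $2^{-k}$), which after summation over the finitely many $l$ with $|l-k|\le 4$ yields the term $\|\nabla v\|_{L^\infty}\|\dot\Delta_k f\|_{L^2}\|\dot\Delta_k g\|_{L^2}$ up to replacing $\dot\Delta_k f$ by nearby blocks $\dot\Delta_l f$. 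For the paraproduct $T_{\partial_m f}v^m$, the localization forces $l\ge k-N_0$, and Bernstein's inequality together with $\|S_{l-1}\partial_m f\|_{L^\infty}\lesssim\|\nabla f\|_{L^\infty}$ gives $\|\dot\Delta_k(T_{\partial_m f}v^m)\|_{L^2}\lesssim\|\nabla f\|_{L^\infty}\sum_{l\ge k-N_0}\|\dot\Delta_l v\|_{L^2}$; after absorbing a geometric factor $2^{k-l}$ (legitimate since $l\ge k-N_0$ up to constants) this produces the tail sum $\sum_{l\ge k-1}2^{k-l}\|\dot\Delta_l v\|_{L^2}$ — here I would swap the roles of $v$ and $f$ compared to the stated bound only if needed, but matching the statement it is the term $\|\nabla v\|_{L^\infty}\|\dot\Delta_k g\|_{L^2}\sum_{l\ge k-1}2^{k-l}\|\dot\Delta_l f\|_{L^2}$ that arises, while the low-high paraproduct $T_{v^m}\partial_m f$ feeds into the commutator piece and contributes the $\|\nabla v\|_{L^\infty}\|\dot\Delta_k f\|_{L^2}$ term. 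The remainder $R(v^m,\partial_m f)$ is written as $\partial_m\big(R(v^m,f)\big)-R(\partial_m v^m,f)$ (or directly estimated), and applying $\dot\Delta_k$ gives a factor $2^k$ which, combined with $\|\dot\Delta_k R(v^m,f)\|_{L^2}\lesssim 2^{3k/2}\sum_{l\gtrsim k}2^{-3l/2}\|\dot\Delta_l v\|_{L^\infty}\|\dot\Delta_l f\|_{L^2}$ type bounds, is controlled by $\|\nabla v\|_{L^\infty}$ times the same tail sum $\sum_{l\ge k-1}2^{k-l}\|\dot\Delta_l f\|_{L^2}$, plus the $\|\nabla f\|_{L^\infty}\|\dot\Delta_k v\|_{L^2}$ term from the piece where the derivative lands on $v$.

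Collecting the three types of contributions and pairing with $\dot\Delta_k g$ via Cauchy--Schwarz in $x$ yields exactly the claimed inequality
\begin{equation*}
\int_{\mathbb{R}^3}[\dot\triangle_k,v\cdot \nabla]f\cdot\dot\triangle_k g\,{\rm d}x
\le C\|\nabla v\|_{L^\infty}\|\dot\Delta_k f\|_{L^2}\|\dot\Delta_k g\|_{L^2}
+C\|\nabla f\|_{L^\infty}\|\dot\Delta_k v\|_{L^2}\|\dot\Delta_k g\|_{L^2}
+C\|\nabla v\|_{L^\infty}\|\dot\Delta_k g\|_{L^2}\sum_{l\ge k-1}2^{k-l}\|\dot\Delta_l f\|_{L^2}.
\end{equation*}
I expect the main obstacle to be bookkeeping rather than a conceptual difficulty: carefully tracking which block indices are forced by the supports (so that every high-frequency sum really does start at $l\ge k-1$ up to a fixed shift, giving a summable geometric series $\sum 2^{k-l}$), and making sure the derivative that is moved onto $f$ in the low-high paraproduct is absorbed into $\|\nabla f\|_{L^\infty}$ and not into an $L^2$ norm that we cannot control. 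Once the decomposition is set up with the correct frequency ranges, each individual estimate is a routine application of Bernstein's inequality, the classical first-order commutator estimate for $\dot\Delta_k$, and Hölder's inequality.
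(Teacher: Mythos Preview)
Your approach---Bony paraproduct decomposition of $v\cdot\nabla f$ plus the classical first-order commutator estimate for $\dot\Delta_k$---is exactly the paper's. The paper splits the commutator directly into three pieces $K_1$ (low-high, i.e.\ $[\dot\Delta_k,S_{l-1}v\cdot\nabla]\dot\Delta_l f$ with $|l-k|\le 2$), $K_2$ (high-low, with roles of $v$ and $f$ exchanged), and $K_3$ (the diagonal remainder over $l\ge k-1$), and bounds each as you outline.

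One point of bookkeeping in your write-up is wrong and causes the confusion you flag. For the high-low paraproduct $T_{\partial_m f}v^m$, the support condition after applying $\dot\Delta_k$ forces $|l-k|\le N_0$, \emph{not} merely $l\ge k-N_0$. Hence this piece is a finite sum of blocks near $k$ and yields the single-block term $C\|\nabla f\|_{L^\infty}\|\dot\Delta_k v\|_{L^2}\|\dot\Delta_k g\|_{L^2}$ (the paper's $K_2$); there is no tail sum here and no need to swap roles. The tail sum $\sum_{l\ge k-1}2^{k-l}\|\dot\Delta_l f\|_{L^2}$ with prefactor $\|\nabla v\|_{L^\infty}$ comes entirely from the remainder (the paper's $K_3$): there one writes $\dot\Delta_l v\cdot\nabla\tilde\Delta_l f$ in divergence form, uses Bernstein on $\dot\Delta_k$ to gain a factor $2^k$, and uses $\|\dot\Delta_l v\|_{L^\infty}\lesssim 2^{-l}\|\nabla v\|_{L^\infty}$ to produce the geometric weight $2^{k-l}$. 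With this attribution corrected, your argument is complete and coincides with the paper's.
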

\begin{proof}
By using the homogeneous Bony decomposition
\begin{equation*}
\begin{split}
\| [\dot\triangle_k,v\cdot \nabla]f\|_{L^2}
\leq
&
\sum\limits_{|k-l|\leq 2}
\int_{\mathbb{R}^3}
\big(
\dot\Delta_k(S_{l-1} v\cdot \nabla \dot\Delta_l f)-S_{l-1}v\cdot\nabla\dot\Delta_k\dot\Delta_l f
\big)
\cdot
\dot\Delta_kg
{\rm d}x\\
&
+
\sum\limits_{|k-l|\leq 2}
\int_{\mathbb{R}^3}
\big(
\dot\Delta_k(\dot\Delta_l v\cdot \nabla S_{l-1} f)-\dot\Delta_lv\cdot\nabla\dot\Delta_k S_{l-1} f
\big)
\cdot
\dot\Delta_kg
{\rm d}x\\
&
+
\sum\limits_{l\geq k-1}
\int_{\mathbb{R}^3}
\big(
\dot\Delta_k(\dot\Delta_l v\cdot \nabla \tilde\Delta_l f)-\dot\Delta_lv\cdot\nabla\dot\Delta_k \tilde\Delta_l f
\big)
\cdot
\dot\Delta_kg
{\rm d}x\\
:=
&
K_1
+
K_2
+
K_3,
\end{split}
\end{equation*}
with $\tilde\Delta_k=\dot\Delta_{k-1}+\dot\Delta_k+\dot\Delta_{k+1}$. By a standard commutator estimate, the H${\rm \ddot{o}}$lder inequality and Bernstein's inequality, one gets
\begin{equation*}
\begin{split}
K_1
 \leq
  &
   C
    2^{-k}
    \sum\limits_{|k-l|\leq 2}
    \|\nabla S_{l-1} v\|_{L^\infty}
     \|\nabla \dot\Delta_l f\|_{L^2}
      \|\dot\Delta_k g\|_{L^2}\\
  \leq
   &
    C
      \|\nabla v\|_{L^\infty}
       \sum\limits_{|k-l|\leq 2}
     \|\dot\Delta_l f\|_{L^2}
      \|\dot\Delta_k g\|_{L^2}.
\end{split}
\end{equation*}
Similarly, one obtains
\begin{equation*}
\begin{split}
K_2
  \leq
   &
    C
      \|\nabla f\|_{L^\infty}
       \sum\limits_{|k-l|\leq 2}
     \|\dot\Delta_l v\|_{L^2}
      \|\dot\Delta_k g\|_{L^2},
\end{split}
\end{equation*}
\begin{equation*}
\begin{split}
K_3
  \leq
   &
    C
       \sum\limits_{l\geq k-1}
       2^{k-l}
      \|\nabla \dot\Delta_l v\|_{L^\infty}
     \|\dot\Delta_l f\|_{L^2}
      \|\dot\Delta_k g\|_{L^2}\\
  \leq
   &
    C
     \|\nabla v\|_{L^\infty}
       \sum\limits_{l\geq k-1}
       2^{k-l}
     \|\dot\Delta_l f\|_{L^2}
      \|\dot\Delta_k g\|_{L^2}.
\end{split}
\end{equation*}
Since the summation over $l$ for fixed $k$ above consists of only a finite number of terms and the norm generated by each term is a multiple of that generated by the typical term, it suffices to keep the typical term with $l=k$ and ignore the summation. This would help keep the presentation concise.
Therefore, the proof of Lemma \ref{Lemma 5.3} is completed.
\end{proof}

We list some important estimates in Sobolev space, which can be found in \cite{Chen-Ding-Wang,Majda-Bertozzi,Stein,Wang}.

\begin{lemma}\label{Lemma 5.4}
Let $f\in H^2(\mathbb{R}^3)$. Then

{\rm (i)} $\|f\|_{L^\infty}\leq C\|\nabla f\|^{1/2}\|\nabla f\|_{H^1}^{1/2}\leq C\|\nabla f\|_{H^1}$;

{\rm (ii)} $\|f\|_{L^6}\leq C\|\nabla f\|$;

{\rm (iii)} $\|f\|_{L^q}\leq C\|f\|_{H^1},\ \ 2\leq q\leq 6$.
\end{lemma}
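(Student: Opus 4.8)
The plan is to derive all three estimates from two classical embeddings on $\R^3$: the Sobolev inequality $\dot H^1(\R^3)\hookrightarrow L^6(\R^3)$ for (ii), and a Gagliardo--Nirenberg bound for the $L^\infty$ norm for (i), with (iii) then a one-line interpolation. For (ii), by density it suffices to take $f\in C_c^\infty(\R^3)$. I would write $f=(-\Delta)^{-1/2}\big((-\Delta)^{1/2}f\big)$ and use that on $\R^3$ the operator $(-\Delta)^{-1/2}$ is convolution against a constant multiple of $|x|^{-2}$, which belongs to the weak Lebesgue space $L^{3/2,\infty}(\R^3)$; the Hardy--Littlewood--Sobolev inequality with the exponent relation $\tfrac23+\tfrac12=1+\tfrac16$ then gives $\|f\|_{L^6}\lesssim\|(-\Delta)^{1/2}f\|_{L^2}=\|\nabla f\|_{L^2}$. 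An alternative is the direct Gagliardo--Nirenberg--Sobolev argument: estimate $\big\||f|^{3/2}\big\|_{L^{3/2}}\lesssim\prod_{i=1}^{3}\big\|\partial_i(|f|^{3/2})\big\|_{L^1}^{1/3}$ and close by H\"older's inequality.

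For (i), I would first establish, for $f\in H^2(\R^3)$, the interpolation inequality $\|f\|_{L^\infty}\lesssim\|\nabla f\|_{L^2}^{1/2}\|\nabla^2 f\|_{L^2}^{1/2}$ by a dyadic frequency splitting, which is in the same spirit as the Littlewood--Paley calculus already used in the paper. Starting from $\|f\|_{L^\infty}\lesssim\|\widehat f\|_{L^1}$ and splitting $\int_{\R^3}|\widehat f(\xi)|\,{\rm d}\xi=\int_{|\xi|\le R}+\int_{|\xi|>R}$, Cauchy--Schwarz on each piece gives $\int_{|\xi|\le R}|\widehat f|\,{\rm d}\xi\le\big(\int_{|\xi|\le R}|\xi|^{-2}{\rm d}\xi\big)^{1/2}\big\||\xi|\widehat f\big\|_{L^2}\lesssim R^{1/2}\|\nabla f\|_{L^2}$ and $\int_{|\xi|>R}|\widehat f|\,{\rm d}\xi\le\big(\int_{|\xi|>R}|\xi|^{-4}{\rm d}\xi\big)^{1/2}\big\||\xi|^2\widehat f\big\|_{L^2}\lesssim R^{-1/2}\|\nabla^2 f\|_{L^2}$, both $\xi$-integrals being finite precisely because the space dimension is three. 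Optimizing over $R$ (taking $R=\|\nabla^2 f\|_{L^2}/\|\nabla f\|_{L^2}$) yields the claimed product bound. Since $\|\nabla^2 f\|_{L^2}\le\|\nabla f\|_{H^1}$, this gives $\|f\|_{L^\infty}\le C\|\nabla f\|_{L^2}^{1/2}\|\nabla f\|_{H^1}^{1/2}$, and the trivial bound $\|\nabla f\|_{L^2}\le\|\nabla f\|_{H^1}$ produces the second inequality in (i).

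Finally, (iii) follows by the logarithmic convexity of $L^p$ norms: for $2\le q\le 6$ write $\tfrac1q=\tfrac{1-\theta}{2}+\tfrac{\theta}{6}$, so that $\theta=\theta(q)\in[0,1]$, and then $\|f\|_{L^q}\le\|f\|_{L^2}^{1-\theta}\|f\|_{L^6}^{\theta}\lesssim\|f\|_{L^2}^{1-\theta}\|\nabla f\|_{L^2}^{\theta}\le C\|f\|_{H^1}$ by (ii) together with $\|f\|_{L^2},\|\nabla f\|_{L^2}\le\|f\|_{H^1}$. I do not expect a genuine obstacle here: the statement is a standard bundle of Sobolev and Gagliardo--Nirenberg inequalities on $\R^3$, already available in the cited references. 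The only points requiring a little care are checking that the interpolation exponent $\theta(q)$ lies in $[0,1]$ exactly for $q\in[2,6]$, and that the radial integrals $\int_{|\xi|\le R}|\xi|^{-2}{\rm d}\xi$ and $\int_{|\xi|>R}|\xi|^{-4}{\rm d}\xi$ converge --- both facts hinging on the dimension being three.
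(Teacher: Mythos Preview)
Your proof is correct. The paper itself does not prove Lemma~\ref{Lemma 5.4}: it merely lists the three inequalities as ``important estimates in Sobolev space, which can be found in'' the references \cite{Chen-Ding-Wang,Majda-Bertozzi,Stein,Wang}, and moves on. So you have supplied what the paper deliberately omits. Your arguments are the standard ones --- HLS/Riesz potential (or GNS) for (ii), a Fourier high/low splitting optimized in $R$ for (i), and $L^p$ interpolation for (iii) --- and all the dimension-dependent checks (convergence of $\int_{|\xi|\le R}|\xi|^{-2}\,{\rm d}\xi$ and $\int_{|\xi|>R}|\xi|^{-4}\,{\rm d}\xi$, the range $\theta(q)\in[0,1]$ for $q\in[2,6]$) are handled correctly.
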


\begin{lemma}\label{Lemma 5.5}
Let $m\geq 1$ be an integer, then we have
\begin{equation}
\|\nabla^m(fg)\|_{L^p}
\leq
  C\|f\|_{L^{p_1}}
   \|\nabla^m g\|_{L^{p_2}}
    +
     C\|\nabla^m f\|_{L^{p_3}}
      \|g\|_{L^{p_4}},
\end{equation}
where $1\leq p_i\leq +\infty,\ (i=1,2,3,4)$ and
\begin{equation*}
\frac{1}{p}
 =
  \frac{1}{p_1}
   +
    \frac{1}{p_2}
     =
      \frac{1}{p_3}
       +
        \frac{1}{p_4}.
\end{equation*}
\end{lemma}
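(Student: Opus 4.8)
The plan is to derive this Moser-type product estimate from the Leibniz rule, H\"older's inequality, and the Gagliardo--Nirenberg interpolation inequality; by a standard density argument it suffices to treat $f,g\in\mathcal S(\mathbb R^3)$. First I would expand, via the Leibniz formula,
\begin{equation*}
\nabla^m(fg)=\sum_{k=0}^m\ \sum_{\substack{|\alpha|=k\\ |\beta|=m-k}}c_{\alpha,\beta}\,\partial^\alpha f\,\partial^\beta g,
\end{equation*}
where for each $k$ the inner sum is finite and the $c_{\alpha,\beta}$ are nonnegative combinatorial constants, so it is enough to bound $\|\partial^\alpha f\,\partial^\beta g\|_{L^p}$ for a single pair of multi-indices with $|\alpha|=k$, $|\beta|=m-k$, $0\le k\le m$, and then sum. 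An alternative route would decompose $fg$ by Bony's paraproduct and estimate the three pieces with Bernstein's inequality, in the spirit of the Littlewood--Paley arguments used elsewhere in the paper; but since $p$ need not equal $2$ here, the interpolation approach is cleaner.

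For each $k$ I would introduce the intermediate exponents $a_k,b_k\in[1,+\infty]$ defined by
\begin{equation*}
\frac1{a_k}:=\Bigl(1-\frac km\Bigr)\frac1{p_1}+\frac km\cdot\frac1{p_3},\qquad
\frac1{b_k}:=\frac km\cdot\frac1{p_4}+\Bigl(1-\frac km\Bigr)\frac1{p_2}.
\end{equation*}
Thanks to the hypothesis $\frac1p=\frac1{p_1}+\frac1{p_2}=\frac1{p_3}+\frac1{p_4}$ one checks at once that $\frac1{a_k}+\frac1{b_k}=\frac1p$, so H\"older's inequality gives $\|\partial^\alpha f\,\partial^\beta g\|_{L^p}\le\|\partial^\alpha f\|_{L^{a_k}}\|\partial^\beta g\|_{L^{b_k}}$. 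This specific choice of $a_k,b_k$ is not arbitrary: it is exactly the exponent prescribed by the scaling identity in the Gagliardo--Nirenberg inequality when one interpolates a derivative of order $k$ with interpolation parameter $\theta=k/m$.

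Next I would apply the Gagliardo--Nirenberg inequality (in the form recorded in \cite{Majda-Bertozzi,Stein,Wang}) to each factor, obtaining
\begin{equation*}
\|\partial^\alpha f\|_{L^{a_k}}\le C\|f\|_{L^{p_1}}^{1-k/m}\|\nabla^m f\|_{L^{p_3}}^{k/m},\qquad
\|\partial^\beta g\|_{L^{b_k}}\le C\|g\|_{L^{p_4}}^{k/m}\|\nabla^m g\|_{L^{p_2}}^{1-k/m},
\end{equation*}
so that each summand is bounded by $C\bigl(\|f\|_{L^{p_1}}\|\nabla^m g\|_{L^{p_2}}\bigr)^{1-k/m}\bigl(\|\nabla^m f\|_{L^{p_3}}\|g\|_{L^{p_4}}\bigr)^{k/m}$. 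A weighted Young inequality (weighted arithmetic--geometric mean with weights $1-\frac km$ and $\frac km$) then turns this into $C\|f\|_{L^{p_1}}\|\nabla^m g\|_{L^{p_2}}+C\|\nabla^m f\|_{L^{p_3}}\|g\|_{L^{p_4}}$; the endpoint terms $k=0$ and $k=m$ need no interpolation at all, being furnished directly by H\"older. Summing over $k$ and over the finitely many multi-indices yields the asserted estimate.

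The step I expect to be the main obstacle is the application of Gagliardo--Nirenberg for the intermediate orders $0<k<m$: one must confirm that the exponents $a_k,b_k$ produced above actually fall within the admissible range of the interpolation inequality on $\mathbb R^3$, and that none of the well-known exceptional configurations of Gagliardo--Nirenberg (an endpoint exponent equal to $+\infty$, or the borderline case in which a $k$-th derivative just fails to lie in $L^{a_k}$) is triggered by the given data $(p,p_1,p_2,p_3,p_4)$. This is where one genuinely leans on the precise statements in the cited references rather than on a bare dimensional count; once that is in hand, the Leibniz expansion, H\"older's inequality, Young's inequality, and the finite summation are all routine.
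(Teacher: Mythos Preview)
The paper does not supply its own proof of this lemma: it is listed in the Appendix as a known estimate with the sentence ``We list some important estimates in Sobolev space, which can be found in \cite{Chen-Ding-Wang,Majda-Bertozzi,Stein,Wang}.'' Your argument---Leibniz expansion, H\"older with the interpolated exponents $a_k,b_k$, Gagliardo--Nirenberg with $\theta=k/m$, then Young's inequality---is exactly the classical Moser-type proof that appears in those references (in particular \cite{Majda-Bertozzi}), so there is nothing to compare against and your approach is the expected one.

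Your own caveat about the admissible range of Gagliardo--Nirenberg is the only real point to flag. The interpolation step $\|\nabla^k f\|_{L^{a_k}}\le C\|f\|_{L^{p_1}}^{1-k/m}\|\nabla^m f\|_{L^{p_3}}^{k/m}$ is valid for the stated $a_k$ whenever $0<k<m$ and the exponents are finite, but can fail at certain $L^\infty$ endpoints (for instance $p_1=\infty$ with $m-k=n/p_3$, or more generally the exceptional cases of Nirenberg's theorem). Since the lemma as stated allows $1\le p_i\le+\infty$ without restriction, the full-strength statement actually requires the harmonic-analytic Kato--Ponce/Coifman--Meyer machinery rather than pure interpolation; the references cited in the paper cover this. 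For the applications made in the body of the paper (where all exponents are either $2$ or $\infty$ paired with an $L^2$), your GN-based argument is entirely sufficient.
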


\vspace{1mm}

\begin{lemma}\label{Lemma 5.6}
Assume that $\|\phi\|_{L^\infty}\leq 1$. Let $f(\phi)$ be a smooth function of $\phi$ with bounded derivatives of any order, then for any integer $m\geq 1$ and $1\leq p\leq +\infty$, we have
\begin{equation*}
\|\nabla^m f(\phi)\|_{L^p}
 \leq
  C\|\nabla^m \phi\|_{L^p}.
\end{equation*}
\end{lemma}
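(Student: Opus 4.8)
The plan is to combine the higher-order chain rule (the Fa\`{a} di Bruno formula) with the Gagliardo--Nirenberg interpolation inequalities on $\mathbb{R}^3$. First I would expand $\nabla^m f(\phi)$ by the chain rule: it is a finite linear combination of terms of the form $f^{(k)}(\phi)\,\nabla^{\alpha_1}\phi\cdots\nabla^{\alpha_k}\phi$, where $1\le k\le m$, each $\alpha_j\ge 1$, and $\alpha_1+\cdots+\alpha_k=m$. Since $\|\phi\|_{L^\infty}\le 1$ and $f$ has bounded derivatives of every order, $\|f^{(k)}(\phi)\|_{L^\infty}\le C$ with $C$ depending only on $f$; hence it suffices to bound each product $\nabla^{\alpha_1}\phi\cdots\nabla^{\alpha_k}\phi$ in $L^p$ by $C\|\nabla^m\phi\|_{L^p}$ and then sum the finitely many terms.

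For a fixed term, I would apply H\"older's inequality with exponents $p_j\in[1,\infty]$ determined by $\frac{1}{p_j}=\frac{\alpha_j}{m}\cdot\frac{1}{p}$, which satisfy $\sum_{j=1}^k\frac{1}{p_j}=\frac{1}{p}$ because $\sum_j\alpha_j=m$ (and indeed $p\le p_j\le\infty$ since $1\le\alpha_j\le m$), obtaining
\[
\Big\|\prod_{j=1}^k\nabla^{\alpha_j}\phi\Big\|_{L^p}\le\prod_{j=1}^k\|\nabla^{\alpha_j}\phi\|_{L^{p_j}}.
\]
Next I would estimate each factor by the Gagliardo--Nirenberg inequality (cf. \cite{Majda-Bertozzi}),
\[
\|\nabla^{\alpha_j}\phi\|_{L^{p_j}}\le C\,\|\nabla^m\phi\|_{L^p}^{\alpha_j/m}\,\|\phi\|_{L^\infty}^{1-\alpha_j/m},
\]
whose interpolation exponents are exactly matched to the above choice of $p_j$. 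Multiplying over $j$ and using $\sum_j\frac{\alpha_j}{m}=1$ gives
\[
\prod_{j=1}^k\|\nabla^{\alpha_j}\phi\|_{L^{p_j}}\le C\,\|\nabla^m\phi\|_{L^p}\,\|\phi\|_{L^\infty}^{\,k-1}\le C\,\|\nabla^m\phi\|_{L^p},
\]
where the last step uses $\|\phi\|_{L^\infty}\le 1$. Summing the finitely many chain-rule terms yields $\|\nabla^m f(\phi)\|_{L^p}\le C\|\nabla^m\phi\|_{L^p}$.

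I do not expect a genuine obstacle here; the only points requiring care are bookkeeping ones: verifying that all the interpolation exponents $p_j$ lie in $[1,\infty]$, and that the Gagliardo--Nirenberg inequality is available in the stated form, including the endpoints $p=1$ and $p=\infty$, where it still holds with $L^\infty$ playing the role of the low-order norm. An equivalent route, perhaps more in line with the rest of the paper, is induction on $m$: the case $m=1$ reads $\nabla f(\phi)=f'(\phi)\nabla\phi$ with $\|f'(\phi)\|_{L^\infty}\le C$, and the inductive step writes $\nabla^m f(\phi)=\nabla^{m-1}\big(f'(\phi)\nabla\phi\big)$, applies Lemma \ref{Lemma 5.5}, uses the inductive hypothesis on $\nabla^{m-1}f'(\phi)$, and closes the remaining product $\|\nabla^{m-1}\phi\|_{L^{p_3}}\|\nabla\phi\|_{L^{p_4}}$ again by Gagliardo--Nirenberg together with $\|\phi\|_{L^\infty}\le 1$.
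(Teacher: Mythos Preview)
Your argument is correct and is precisely the standard proof of this Moser-type composition estimate. Note, however, that the paper does not actually supply its own proof of Lemma \ref{Lemma 5.6}: it merely lists the lemma among ``some important estimates in Sobolev space, which can be found in \cite{Chen-Ding-Wang,Majda-Bertozzi,Stein,Wang}.'' Your Fa\`a di Bruno plus Gagliardo--Nirenberg argument is exactly what one finds in those references, so there is no discrepancy to discuss.
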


\vspace{1mm}

\begin{lemma}\label{Lemma 5.7}
Let $0<l<3$, $1<p<q<\infty$, $\frac{1}{q}+\frac{l}{3}=\frac{1}{p}$, then
\begin{equation}
\|\Lambda^{-l}f\|_{L^q}
 \lesssim
  \|f\|_{L^p}.
\end{equation}
\end{lemma}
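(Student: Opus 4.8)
\textbf{Proof proposal for Lemma~\ref{Lemma 5.7}.} The plan is to identify $\Lambda^{-l}$ with a Riesz potential and then reduce the claim to the classical Hardy--Littlewood--Sobolev inequality. For $0<l<3$ the Fourier multiplier $|\xi|^{-l}$ is, up to a dimensional constant $c_{3,l}>0$, the Fourier transform of the locally integrable tempered distribution $|x|^{l-3}$, so that on $\mathcal S'_h$ one has
\begin{equation*}
\Lambda^{-l}f(x)=c_{3,l}\big(|\cdot|^{l-3}\ast f\big)(x)=c_{3,l}\int_{\mathbb R^3}|x-y|^{l-3}f(y)\,{\rm d}y .
\end{equation*}
Hence it suffices to show that this integral operator is bounded from $L^p$ to $L^q$ whenever $1<p<q<\infty$ and $\frac1q=\frac1p-\frac l3$; the strictness $p>1$, $q<\infty$ is exactly what excludes the two endpoints at which such an estimate fails.

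For the core bound I would use Hedberg's splitting. Fix $x\in\mathbb R^3$ and a radius $R>0$, and split the kernel $K(z)=|z|^{l-3}$ as $K\mathbf 1_{\{|z|<R\}}+K\mathbf 1_{\{|z|\ge R\}}$. Writing the punctured ball as a union of dyadic shells $\{2^{-j-1}R\le|z|<2^{-j}R\}$ and using $l-3>-3$, the near contribution is dominated by the Hardy--Littlewood maximal function,
\begin{equation*}
\Big|\int_{|z|<R}|z|^{l-3}f(x-z)\,{\rm d}z\Big|\lesssim R^{\,l}\,Mf(x),
\end{equation*}
while Hölder's inequality gives for the far part
\begin{equation*}
\Big|\int_{|z|\ge R}|z|^{l-3}f(x-z)\,{\rm d}z\Big|\le\big\|\,|z|^{l-3}\mathbf 1_{\{|z|\ge R\}}\big\|_{L^{p'}}\|f\|_{L^p}\lesssim R^{-3/q}\|f\|_{L^p},
\end{equation*}
where the integrability $\int_{|z|\ge R}|z|^{(l-3)p'}\,{\rm d}z<\infty$ uses precisely $\frac1p>\frac l3$ and the exponent $l-\frac3p=-\frac3q$ comes from the scaling relation. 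Adding the two bounds and minimizing in $R$ (the optimal choice being $R\sim(\|f\|_{L^p}/Mf(x))^{p/3}$, since $l+\frac3q=\frac3p$) produces the pointwise interpolation inequality
\begin{equation*}
|\Lambda^{-l}f(x)|\lesssim\big(Mf(x)\big)^{p/q}\,\|f\|_{L^p}^{1-p/q}.
\end{equation*}

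It then remains to raise this to the power $q$, integrate in $x$, and invoke the Hardy--Littlewood maximal theorem $\|Mf\|_{L^p}\lesssim\|f\|_{L^p}$ (valid for $1<p<\infty$, which is where the hypothesis $p>1$ enters a second time):
\begin{equation*}
\|\Lambda^{-l}f\|_{L^q}\lesssim\|f\|_{L^p}^{1-p/q}\big\|(Mf)^{p/q}\big\|_{L^q}=\|f\|_{L^p}^{1-p/q}\|Mf\|_{L^p}^{p/q}\lesssim\|f\|_{L^p},
\end{equation*}
which is the assertion of the lemma.

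The only genuinely delicate points are the distributional identification of $\Lambda^{-l}$ with the Riesz potential on $\mathcal S'_h$ (so that the defining convolution integral converges a.e.\ for the relevant class of $f$, which is then justified a posteriori by finiteness of the bound) and the bookkeeping that all scaling exponents match; everything else is routine maximal-function machinery. I note that the seemingly cheaper route of estimating $\Lambda^{-l}$ on Littlewood--Paley blocks by Bernstein's inequality only yields the Besov analogue $\dot B^0_{p,r}\to\dot B^0_{q,r}$, and converting this back to Lebesgue norms through Lemma~\ref{Lemma 5.1} does not cover the full range $1<p<q<\infty$; so the Hedberg argument above, or a direct appeal to the Hardy--Littlewood--Sobolev inequality as recorded in \cite{Stein} (or \cite{Bahouri-Chemin-Danchin}), is what is needed. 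In practice I would simply cite that classical result, with the outline above as what one would expand for a self-contained proof.
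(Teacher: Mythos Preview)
Your proof is correct: the identification of $\Lambda^{-l}$ with the Riesz potential, Hedberg's near/far splitting, the optimization in $R$, and the final appeal to the Hardy--Littlewood maximal theorem are all carried out properly, with the scaling exponents matching as they should. The paper itself does not prove this lemma at all; it is stated in the Appendix as a known estimate and simply attributed to the references \cite{Chen-Ding-Wang,Majda-Bertozzi,Stein,Wang} (Stein's book being the classical source for Hardy--Littlewood--Sobolev), which is exactly the ``in practice I would simply cite'' option you mention at the end of your write-up.
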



\bibliographystyle{plain}

\end{document}